\theoremstyle{plain}
\theoremstyle
{plain}
\newtheorem{theorem}{Theorem}[section]
\newtheorem{proposition}[theorem]{Proposition}
\newtheorem{fact}[theorem]{Fact}
\newtheorem{lemma}[theorem]{Lemma}
\newtheorem{corollary}[theorem]{Corollary}
\newtheorem{question}[theorem]{Question}
\newtheorem{claim}[theorem]{Claim}
\theoremstyle{definition}
\newtheorem{definition}[theorem]{Definition}
\newtheorem{example}[theorem]{Example}
\newtheorem{remark}[theorem]{Remark}
\newtheorem{problem}[theorem]{Problem}
\newtheorem{theoremintro}{Theorem}
\newcommand{\N}{\mathbb{N}}
\newcommand{\Z}{\mathbb{Z}}
\newcommand{\Q}{\mathbb{Q}}
\newcommand{\R}{\mathbb{R}}
\newcommand{\T}{\mathbb{T}}
\newcommand{\BB}{\mathcal B}
\newcommand{\BBB}{\mathcal B}
\newcommand{\II}{\mathcal I}
\newcommand{\QQ}{\mathcal Q}
\DeclareMathOperator{\asdim}{asdim}
\DeclareMathOperator{\dsc}{dsc}
\DeclareMathOperator{\sat}{sat}
\DeclareMathOperator{\supt}{supt}
\def\iso{{\mathcal I}soL}
\def\aaa{\approx}
\def\LL{\ell\mbox{-}\mathcal L}
\author{D. Dikranjan, I. Protasov, N. Zava}
\title{Hyperballeans of groups
}
\thanks{The first named author was partially supported by grant PSD-2015-2017-DIMA-PRID-DIKRANJAN TokaDyMA of Udine University.}
\date{}
\address{Department of mathematical, Computer and Physical Sciences, Udine University, 33 100 Udine, Italy}
\email{dikran.dikranjan@uniud.it}
\address{Department of Computer Science and Cybernetics, Kyiv University, Volodymyrska 64, 01033, Kyiv, Ukraine}
\email{i.v.protasov@gmail.com; }
\address{Department of mathematical, Computer and Physical Sciences, Udine University, 33 100 Udine, Italy}
\email{nicolo.zava@gmail.com}
\begin{document}
\maketitle

\begin{abstract}
In this paper we define some ballean structure on the power set of a group and, in particular, we study the subballean with support the lattice of all its subgroups. If $G$ is a group, we denote by $L(G)$ the family of all subgroups of $G$. For two groups $G$ and $H$, we relate
 their algebraic structure via the ballean structure of $L(G)$ and $L(H)$.
 \vspace{2 mm}
	
{\bf MSC} :54E15, 20F65, 20F15, 20E99
\vspace{2mm}
	
{\bf Keywords} : ballean, coarse structure, coarse map, asymorphism, asymptotic dimension, balleas defined by ideals, hyperballeans.
\end{abstract}


\section*{Introduction}

Coarse geometry is the study of large-scale properties of spaces, ignoring their local, small-scale, ones. It was initially developed for metric spaces and it found important applications to Novikov conjecture, to coarse Baum-Connes conjecture, and to geometric group theory, after Gromov breakthrough. Inspired by uniformities, Roe defined coarse spaces in order to encode many large-scale properties of metric spaces (\cite{Roe}). At the same time, Protasov and Banakh (\cite{ProBan}) defined balleans, an equivalent construction that generalises balls in a metric space. For a categorical look at the balleans and coarse spaces 
see \cite{DikZa}. On group $G$, one can define some ballean structures that agree with the algebraic structure of the group. Of particular
interest is the finitary ballean $\BB_G$, generated by the family of all finite subsets of a group $G$.

A relevant issue is the exploration of properties in coarse geometry related to well-known ones in topology (see, for example, \cite{CeDyVa}). For instance, there is some
evidences that connectedness in the framework of balleans is the large-scale counterpart of the Hausdorff property in topological and uniform spaces. Another
outstanding example of this approach is the asymptotic dimension defined by Gromov, inspired by the classical covering (Lebesgue) dimension.

 We now mention a further instance of this approach. For every given ballean $\BB$, in \cite{ProPro} the authors introduced the hyperballean $\BB^\flat$ of $\BB$, which is a ballean structure on the family of all non-empty bounded subsets of $\BB$. In \cite{b5}, the ballean structure $\BB^\flat$ was extended to the whole power set of $\BB$ by introducing the ballean structure $\exp\BB$. The definition of $\exp\BB$ was inspired by the theory of uniform spaces. In fact, if $X$ is a uniform space, the Hausdorff-Bourbaki hyperspace is a uniform structure on the power set of $X$ and it extends the uniform hyperspace of $X$, which is the restriction of the Hausdorff-Bourbaki hyperspace to the family of all closed subsets of $X$ (see, for example, \cite{Isb} for a discussion about uniform hyperspaces, and \cite{b5} for further details about similarities between uniform hyperspaces and hyperballeans).

Recall that the Hausdorff-Bourbaki hyperspace of a Hausdorff uniform space is not Hausdorff in general. Similarly, as we may have expected, in general, $\exp\BB$ is highly disconnected even if $\BB$ is connected (see \cite{b5} and Proposition \ref{prop:dsc_expG}). In order to obtain a more manageable object there are two approaches. The first one is focusing the attention on $\BB^\flat$. In fact, $\BB^\flat$ is connected whenever $\BB$ is connected. Alternatively, if we start from a ballean $\BB_G$ defined on a group $G$, we can consider the subballean $\mathcal L(G)=\exp\BB_G|_{L(G)}$, where $L(G)$ is the family of all subgroups of $G$. This second idea is developed in this paper.

Here, we mainly focus on two ballean structures on the subgroup lattice $L(G)$ of a group $G$. The first one, denoted by $\mathcal L(G)$,  is called the {\em subgroup exponential hyperballean}, while the second one, denoted by $\LL(G)$,  is called the {\em subgroup logarithmic hyperballean}. The latter can be characterised as follows: it is the ballean structure on $L(G)$ induced by the extended-metric
$$
d(H,K)=\log(\max\{\lvert H:H\cap\ K\rvert,\lvert K:H\cap K\rvert\}),
$$
where $H$ and $K$ are two subgroups of $G$. Actually, we provide a ballean structure, $\ell\mbox{-}\exp\BB_G$, on the entire power set of $G$ such that $\LL(G)$ is
the restriction of $\ell\mbox{-}\exp\BB_G$ to ${L(G)}$. The balleans $\mathcal L(G)$ and $\LL(G)$ have also the same connected components determined by the property that two subgroups are in the same connected component if and only if they are commensurable. In particular, the existence of isolated points (i.e., points in $L(G)$ whose connected components is just a singleton) is closely related to divisibility. In fact, we show that a subgroup $H$ of $G$ is isolated if and only if it is divisible and has a torsion-free direct summand.

Moreover, while all examples of subgroup exponential hyperballeans we considered have asymptotic dimension $0$, the behaviour of asymptotic dimension of subgroup logarithmic hyperballeans is much more interesting. In particular, we compute it for some well known groups, such as $\Z$ ($\asdim\LL(\Z)=\infty$) or the Pr\"uffer $p$-group $\Z_{p^{\infty}}$ ($\asdim\Z_{p^{\infty}}=1$), where $p$ is a prime, we find necessary conditions on an abelian groups $G$ that imply $\asdim\LL(G)<\infty$ ($G$ has to be torsion and layerly finite), and we characterise those groups $G$ with $\asdim\LL(G)=0$ ($G$ has to be torsion, reduced and with all $p$-ranks finite).

The last part of our study is focused on answering the following natural question. If $G$ and $H$ are two isomorphic groups, then $\mathcal L(G)$
and  $\mathcal L(H)$ are asymorphic (i.e., isomorphic in the category of balleans and coarse maps) and we write $\mathcal L(G)\aaa\mathcal L(H)$.
Moreover, the isomorphism between $G$ and $H$ yields also $\LL(G)\aaa \LL(H)$. However, the converse is not true in general. As a ``rigidity result'' we mean a set of conditions that imply that these converse implications holds. In other words, it is a collection of properties that implies that $G$ is isomorphic to $H$ whenever $\mathcal L(G)\aaa\mathcal L(H)$ or $\LL(G)\aaa\LL(H)$. 
Note that this is not the usual notion of rigidity in large-scale geometry (see, for example, \cite{Roe}). In particular, we focus on some special cases, namely, for a group $G$, we investigate the hypothesis $\mathcal L(G)\aaa\mathcal L(\Z)$, $\mathcal L(G)\aaa\mathcal L(\Z_{p^{\infty}})$, $\LL(G)\aaa\LL(\Z)$, and $\LL(G)\aaa\LL(\Z_{p^{\infty}})$, for some $p$ prime, and we obtain the following:
\begin{theoremintro}\label{theointro:exp}
Let $G$ be a group.
\begin{enumerate}[$\bullet$]
\item Suppose that $G$ has an element of infinite order. Then $\mathcal L(G)\aaa\mathcal L(\Z)$ if and only if $G\simeq\Z$.
\item Suppose that $G$ is abelian, then $\mathcal L(G)\aaa\mathcal L(\Z)$ if and only if either $G\simeq\Z$ or $G\simeq\Z_{p^{\infty}}$, for some prime $p$.
\end{enumerate}
\end{theoremintro}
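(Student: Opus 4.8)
The plan is to treat the two ``if'' implications and the two ``only if'' implications separately, the latter through coarse invariants of the hyperballean, namely the partition of $L(G)$ into connected components (equivalently, by the introduction, into commensurability classes) together with the isolated points among them. Since an asymorphism is a bijection that is bornologous in both directions, it carries connected components bijectively onto connected components and singleton components onto singleton components; hence \emph{the number of connected components} and \emph{the number of isolated points} are asymorphism invariants. A direct inspection gives that $\mathcal{L}(\Z)$ has exactly two connected components: the isolated point $\{0\}$ (the only finite subgroup of $\Z$) and the connected component $\{n\Z : n\geq 1\}$ of all nonzero subgroups, which are pairwise commensurable. Thus $\mathcal{L}(G)\aaa\mathcal{L}(\Z)$ forces $\mathcal{L}(G)$ to have exactly two connected components, exactly one of which is a singleton.

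For the first item, assume $G$ has an element $g$ of infinite order and $\mathcal{L}(G)\aaa\mathcal{L}(\Z)$. The subgroups $\langle g\rangle$ and $\langle g^2\rangle$ are infinite, distinct and commensurable, so the component $C_{\infty}$ containing them is not a singleton; hence it is the unique non-singleton component, and the remaining component is the isolated point. The finite subgroups of $G$ form a single commensurability class $C_{\mathrm{fin}}$ (any two finite subgroups are commensurable) containing $\{0\}$. If $G$ had a nontrivial finite subgroup, then $C_{\mathrm{fin}}$ would be a second non-singleton component, a contradiction; so $G$ is torsion-free and $C_{\mathrm{fin}}=\{\{0\}\}$ is the isolated point. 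Consequently the two components are exactly $C_{\mathrm{fin}}$ and $C_{\infty}$, so \emph{all} infinite subgroups of $G$ are commensurable; in particular $G$ is commensurable with $\langle g\rangle\cong\Z$, whence $[G:\langle g\rangle]<\infty$. A torsion-free group containing an infinite cyclic subgroup of finite index is infinite cyclic, so $G\cong\Z$.

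For the second item it remains, by the first item applied to the non-torsion case, to treat an abelian torsion group $G$ and to show $G\cong\Z_{p^{\infty}}$. Again $\mathcal{L}(G)$ has exactly two components and exactly one isolated point. An infinite torsion abelian group has infinitely many finite subgroups, so $C_{\mathrm{fin}}$ is infinite and is therefore the unique non-singleton component; the other component is thus a singleton $\{H\}$ consisting of a single infinite subgroup commensurable with no other subgroup. As $G$ is itself an infinite subgroup it must equal $H$, and since the only components are $C_{\mathrm{fin}}$ and $\{G\}$, every \emph{proper} subgroup of $G$ is finite. By the classical description of the infinite abelian groups all of whose proper subgroups are finite, $G\cong\Z_{p^{\infty}}$ for some prime $p$.

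Finally, the ``if'' implications. If $G\cong\Z$ then $\mathcal{L}(G)\aaa\mathcal{L}(\Z)$ by the isomorphism-invariance of $\mathcal{L}(\cdot)$ noted in the introduction, while if $G\cong\Z_{p^{\infty}}$ one must prove $\mathcal{L}(\Z_{p^{\infty}})\aaa\mathcal{L}(\Z)$; this is where I expect the real work to lie. Both hyperballeans have the same shape---one isolated point together with a single connected component---so after matching the isolated points it suffices to produce an asymorphism between the two connected components: the finite subgroups $\{H_n\}$ of $\Z_{p^{\infty}}$, which form a chain under inclusion, and the nonzero subgroups $\{n\Z\}$ of $\Z$, which form a lattice under divisibility. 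The main obstacle is reconciling these genuinely different order structures into one asymorphism; I would carry it out by invoking the explicit descriptions of $\mathcal{L}(\Z)$ and $\mathcal{L}(\Z_{p^{\infty}})$ established before the theorem, which exhibit both connected components as asymorphic to the same canonical countable connected ballean (of asymptotic dimension $0$), thereby yielding $\mathcal{L}(\Z_{p^{\infty}})\aaa\mathcal{L}(\Z)$.
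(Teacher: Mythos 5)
Your two ``only if'' arguments are correct and follow essentially the same route as the paper: Lemma \ref{lemma:asy_ce_conncomp} and Proposition \ref{lemma:as_in_mathcalL} there likewise read off the commensurability-class structure (exactly two components, exactly one of them a singleton), deduce in the non-torsion case that every non-trivial subgroup has finite index, and in the torsion abelian case that $G$ is quasi-finite, hence a Pr\"ufer group; the only cosmetic difference is that the paper closes the first case by quoting Fedorov's theorem where you use the classical fact that a torsion-free group with an infinite cyclic subgroup of finite index is infinite cyclic.

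The genuine gap is in the ``if'' direction, which you yourself flag as ``where the real work lies'' and then do not carry out. To conclude, one must actually prove $\mathcal L(\Z_{p^\infty})\aaa\mathcal L(\Z)$, and the mechanism you propose --- that both non-trivial components are asymorphic to ``the same canonical countable connected ballean (of asymptotic dimension $0$)'' --- is not a proof: you never identify that canonical ballean or verify either asymorphism, and asymptotic dimension $0$ is far too weak an invariant to force two countable connected unbounded balleans to be asymorphic. For example, the finitary ballean of $\bigoplus_{\omega}\Z/2\Z$ is countable, connected, unbounded and cellular, yet it is not asymorphic to the ideal ballean $(\N,[\N]^{<\omega})$: the latter is thin, the former is not (for a fixed non-zero $a$, every ball $B(x,\{0,a\})=\{x,x+a\}$ has two points), and thinness is an asymorphism invariant. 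What actually closes the argument in the paper is precisely a thinness computation (Proposition \ref{last:propo}): for $F\subseteq[-m,m]$ and $n>3m$ one shows $B_{\mathcal L(\Z)}(n\Z,F)=\{n\Z\}$ by passing to the quotient $\Z\to\Z/n\Z$, in which the image of $F$ contains no non-trivial subgroup, while in $\Z_{p^\infty}$ the chain structure of subgroups gives $B(H_n,F)=\{H_n\}$ for all large $n$. Thinness then identifies each non-trivial component with the ideal ballean of its finite subsets (Proposition \ref{prop:charact_thin}), after which \emph{any} bijection matching the isolated points is an asymorphism (Fact \ref{fact:asy_ideal}); this is the paper's Corollary \ref{coro:Z_Zp_asy}. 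Your outline gestures at this statement, but the decisive lemma is missing, and the invariant you name in its place would not deliver it.
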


\begin{theoremintro}\label{theointro:log}
Let $G$ be a group and $p$ be a prime.
\begin{enumerate}[$\bullet$]
\item $\LL(G)\aaa\LL(\mathbb{Z})$ if and only if $G\simeq \mathbb{Z}$;
\item $\LL(G)\aaa\LL(\mathbb{Z}_{p^{\infty}})$ if and only if $G\simeq \mathbb{Z}_{q^{\infty}}$ for some prime $q$.
\end{enumerate}
\end{theoremintro}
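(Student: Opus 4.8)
The plan is to prove each biconditional by deducing the forward implication from functoriality and then reconstructing the group from coarse invariants of an asymorphism. Throughout I use that an asymorphism carries connected components bijectively to connected components, hence isolated points to isolated points, and that (by the Introduction) two subgroups lie in the same component exactly when commensurable, so an isolated point is a subgroup with neither a proper finite-index subgroup nor a proper finite-index overgroup in $G$. The forward implications are immediate: if $G\simeq\Z$ (resp.\ $G\simeq\Z_{q^\infty}$) then $\LL(G)\aaa\LL(\Z)$ (resp.\ $\LL(G)\aaa\LL(\Z_{q^\infty})$) by functoriality. For the second statement it remains to check $\LL(\Z_{q^\infty})\aaa\LL(\Z_{p^\infty})$ for all primes $p,q$: the proper subgroups of $\Z_{q^\infty}$ are the finite groups $C_{q^n}$, forming a chain with $d(C_{q^m},C_{q^n})=|m-n|\log q$, so the unique non-isolated component is the ray $\N$ with metric rescaled by $\log q$ while $\Z_{q^\infty}$ is the unique isolated point; rescaling a ray is a coarse equivalence, so matching the isolated points gives the claim --- which is why the target prime is only determined as ``some $q$''.

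\emph{Pr\"ufer case (hard direction).} Assume $\LL(G)\aaa\LL(\Z_{p^\infty})$. Asymptotic dimension being a coarse invariant, $\asdim\LL(G)=1$; finiteness forces, by the Introduction, $G$ torsion and layerly finite, hence of finite rank at every prime. Since $\asdim\LL(G)\ne 0$, the zero-dimensional characterization shows $G$ is non-reduced, so its divisible part is a nontrivial torsion divisible group $D=\bigoplus_q(\Z_{q^\infty})^{(r_q)}$ with finite exponents. Now $\LL(G)$ has exactly one isolated point and exactly one non-isolated component (a ray). First I would use the component count: two independent Pr\"ufer summands yield two non-commensurable ``divisible lines'', each generating its own ray-component, so $D=\Z_{q^\infty}$ for a single prime $q$. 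Then I would use the isolated-point count: a nontrivial reduced torsion part would attach a finite proper overgroup to every divisible subgroup, destroying all isolated points; hence the reduced part is trivial and $G=D\simeq\Z_{q^\infty}$.

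\emph{$\Z$ case (hard direction).} Assume $\LL(G)\aaa\LL(\Z)$. Here $\asdim\LL(\Z)=\infty$, so I must argue through structure rather than dimension. In $\LL(\Z)$ the trivial subgroup is the unique isolated point, and the unique non-isolated component, consisting of all $n\Z$, is under $n\Z\mapsto(v_q(n))_q$ asymorphic to $\bigoplus_q\N$ with the weighted metric $\sum_q|a_q-b_q|\log q$ --- because $d(m\Z,n\Z)=\log\max\{m/\gcd(m,n),\,n/\gcd(m,n)\}$ is bi-Lipschitz to that $\ell^1$-metric. Transporting, $\LL(G)$ has a unique isolated point and a unique non-isolated component $\mathcal{C}$ with $\mathcal{C}\aaa\bigoplus_q\N$, where $\mathcal{C}$ is a single commensurability class of subgroups of $G$. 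The plan is to convert the coarse type $\bigoplus_q\N$ of $\mathcal{C}$ --- an independent ray of scale $\log q$ for each prime $q$, emanating from a distinguished apex --- into the algebraic statement that $G$ possesses, for each prime power, exactly one subgroup of that index arranged by divisibility as in $\Z$, and that $G$ has no torsion; together with the uniqueness of the isolated point and of $\mathcal{C}$ this should force $G\simeq\Z$.

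\emph{Main obstacle.} The crux, in both hard directions, is this last reconstruction: an asymorphism preserves only the coarse metric of $\LL(G)$, not the inclusion/lattice structure of the subgroups, so one must manufacture algebraic invariants of $G$ out of purely coarse data. In the Pr\"ufer case the counts of isolated points and of non-isolated components, backed by the asdim reductions, appear to suffice. In the $\Z$ case the obstacle is sharper: with $\asdim=\infty$ dimension is useless, and one must extract from the coarse type $\bigoplus_q\N$ of a single commensurability class --- its splitting into prime-indexed rays of distinct scales $\log q$ and its distinguished maximal element --- enough rigidity to separate $\Z$ from every other group realizing the same coarse type. Turning these combinatorial features of $\bigoplus_q\N$ into genuine coarse invariants is the step I expect to be hardest.
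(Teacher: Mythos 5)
Your forward implications are fine, and your Pr\"ufer-case analysis would essentially work \emph{if $G$ were assumed abelian} --- but the theorem is stated for arbitrary groups, and that is where the first genuine gap lies. Your opening step, ``$\asdim\LL(G)<\infty$ forces $G$ torsion and layerly finite,'' is Corollary \ref{New:Corollary}, which is proved only for abelian $G$ (its proof needs an infinite $G[p]$ to contain $\bigoplus_\omega\Z(p)$), and it is false in general: a Tarskii monster $T$ of exponent $p$ (Example \ref{ex:Tarskii}) has $\asdim\LL(T)=0<\infty$ yet is not layerly finite. The same objection hits your use of the $\asdim=0$ characterisation (Theorem \ref{[4.14]}), of the divisible/reduced splitting, and of Fact \ref{theo:divis_structure}: none of these exist for non-abelian torsion groups, so your argument cannot exclude, say, an infinite non-abelian quasi-finite group. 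The paper proceeds differently: by Lemma \ref{lemma:asy_ce_conncomp}(a) and Proposition \ref{lemma:as_in_mathcalL}, the two-component structure (one singleton, one infinite unbounded) makes a torsion $G$ \emph{quasi-finite}; layer finiteness is then extracted not from $\asdim$ but from finiteness of balls --- all subgroups of order $n$ lie within $\log n$ of one another, so an infinite $X_n$ would create an infinite ball, whereas every ball of $\LL(\Z)$ and of $\LL(\Z_{p^\infty})$ is finite and asymorphisms (being bijections, bornologous both ways) preserve finiteness of balls; finally Chernikov's theorem on layerly finite groups \cite{b7} (such a $G$ either contains a copy of some $\Z_{q^\infty}$ or is a subdirect product of finite groups, the latter impossible for an infinite quasi-finite group) pins down $G\simeq\Z_{q^\infty}$. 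Chernikov's theorem is the group-theoretic input your proposal is missing.

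In the $\Z$ case you do not have a proof at all: you state a plan and then explicitly flag as the ``main obstacle'' the step of converting the coarse type $\bigoplus_q\N$ of the unbounded component back into algebraic information about $G$; that step is never carried out, and it is exactly the hard part. The paper never performs any such reconstruction. It splits on torsion instead: if $G$ has an element $g$ of infinite order, then the component of $\langle g\rangle$ contains all $\langle g^k\rangle$ and so is the infinite component, forcing the singleton component to be $\{\{e\}\}$; hence every nontrivial subgroup (including $G$ itself) is commensurable with $\langle g\rangle$, so every nontrivial subgroup has finite index in $G$, and Fedorov's theorem \cite{b6} gives $G\simeq\Z$ outright. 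If instead $G$ is torsion, the quasi-finite/layerly finite/Chernikov argument above yields $G\simeq\Z_{q^\infty}$, which is then ruled out because $\asdim\LL(\Z_{q^\infty})=1\neq\infty=\asdim\LL(\Z)$ (Corollary \ref{coro:asdim_L_N} and Theorem \ref{4.12}). So the needed invariants are Fedorov's and Chernikov's theorems together with finiteness of balls, not any rigidity property of $\bigoplus_q\N$: with those in hand, the reconstruction problem you identified never arises.
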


The paper is organised as follows. In Section \ref{sec:basic} we provide the necessary background in coarse geometry and then, in Section \ref{sec:hyper}, both the exponential hyperballean, $\exp\BB$, (\S\ref{sub:hyper}) and the logarithmic hyperballean, $\ell\mbox{-}\exp\BB$, (\S\ref{sub:log}) are introduced. In particular, we prove in \S\ref{sub:hyper} that the exponential ballean of a cellular ballean is cellular. Section \ref{sec:connected_comp} is devoted to the subgroup exponential hyperballean and the subgroup logarithmic hyperballean. In the first part of this section, we discuss a characterisation of the subgroup logarithmic hyperballean and the connected components of the balleans $\mathcal L(G)$ and $\LL(G)$. In particular, in \S\ref{sub:isoLG} we show how the existence of isolated points is related to divisibility. Furthermore, \S\ref{sub:sub_exp} is devoted to the study of the subgroup exponential hyperballean, while \S\ref{sec:logL} is focused on the subgroup logarithmic hyperballean, with a particular emphasis on its asymptotic dimension. In Section \ref{sub:G-space} we present and briefly discuss another ballean structure, $G\mbox{-}\exp\BB_G$, on the power set of a group $G$, namely, the one induced by the action of $G$ on its power set by left shifts. In particular we show that, if $G$ is infinite, the number of connected components of $\exp\BB_G$, $\ell\mbox{-}\exp\BB_G$, and $G\mbox{-}\exp\BB_G$ coincides and it is $2^{\lvert G\rvert}$.
In Section \ref{sec:rigidity} we prove our rigidity results. More in detail, in \S\ref{sub:rig_sub_exp} we show the ones concerning the subgroup exponential hyperballean (e.g., Theorem \ref{theointro:exp}), while in \S\ref{sub:rig_sub_log} we consider the subgroup logarithmic hyperballean (Theorem \ref{theointro:log} is proved there). In \S\ref{sub:rig_div} we discuss some similar results for divisible groups and in \S\ref{sub:rig_ce} we study the case where the subgroup exponential hyperballean of a group is coarsely equivalent to the one of $\Z$ or $\Z_{p^{\infty}}$, for some prime $p$.

For further progress on the connection of properties of (functrorial) coarse structures on abelian (topological) groups and their
algebraic (resp., topological) structure see the forthcoming papers \cite{DikZa1,DikZa2}.

	

\subsection*{Notation and terminology}

In the sequel, we adopt the standard notation in group theory and uniform space, following \cite{F,b4} and \cite{Isb}, respectively.
More specifically, we denote by $\N$, $\Z$, $\Q$, $\R$ and $\Z(m)$ the set of naturals, the group of integers,  the group of rationals, the group of
real and the cyclic group of order $m$, respectively. The circle group $\T = \R/\Z$ will be denoted additively, as well as its
subgroup $\Z_{p^{\infty}}=\langle\{1/p^n\in \T:  n\in\N\}\rangle$, for a prime $p$, known as the {\em Pr\" uffer $p$-group}.

Furthermore, we denote by $0$ the identity of an abelian group $G$, by $t(G)$ its torsion subgroup, and by $r_0(G)$ the free rank of $G$, i.e., the cardinality of the maximal independent subset of $G$ ($r_0(G)$ can be also defined as $\dim_{\Q} G/t(G) \otimes \Q$).
For $m\in\N$, set
$$
G[m]:=\{mx :  x\in G\}\mbox{ and }mG = \{mx: x\in G\},
$$
they are subgroup of $G$. We say that $G$ is {\em divisible} if $mG = G$ for every $m\in\N\setminus\{0\}$.
Every abelian group $G$ has a largest divisible subgroup, denote by $d(G)$.

 For prime $p$ let $r_p(G)$ be the $p$-rank of $G$ (defined as $\dim_{\Z/p\Z}G[p]$) and $r(G) = r_0(G) + \sum_p r_p(G)$.
  Finally, we denote by $\pi(G)$ the set of primes $\{p: r_p(G) > 0\}$.

 For a set $X$, we denote by $\mathcal P(X)$ its power set and we let $\exp X=\mathcal P(X)\setminus\{\emptyset\}$.

\section{Basic definitions and general constructions}\label{sec:basic}

A {\em ballean} is a triple $\BBB=(X,P,B)$ where $X$ and $P$ are sets, $P\neq\emptyset$,  and $B\colon X\times P \to \mathcal P(X)$ is a map, with the following properties:
\begin{enumerate}[$\bullet$]
	\item $x\in B(x,\alpha)$ for every $x\in X$ and every $\alpha\in P$;
	\item {\em symmetry}, i.e., for any $\alpha\in P$ and every pair of points $x,y\in X$, $x\in B(y,\alpha)$ if and only if $y\in B(x,\alpha)$;
	\item {\em upper multiplicativity}, i.e., for any $\alpha,\beta\in P$, there exists a $\gamma\in P$ such that, for every $x\in X$, $B(B(x,\alpha),\beta)\subseteq B(x,\gamma)$, where $B(A,\delta)=\bigcup\{B(y,\delta): y\in A\}$, for every $A\subseteq X$ and $\delta\in P$.
\end{enumerate}
The set $X$ is called {\em support of the ballean}, $P$ -- {\em set of radii}, and $B(x,\alpha)$ -- {\em ball of centre $x$ and radius $\alpha$}. When the ballean structure $(X,P,B)$ we are considering on $X$ is clear, we often denote $(X,P,B)$ by its support $X$.

This definition of ballean does not coincide with, but it is equivalent to the usual one (see \cite{b3} for details).

A ballean $\mathcal{B}$   is called {\em connected} if, for any $x,y \in X$, there exists $\alpha\in P$  such that $y\in B(x,\alpha)$.
Every ballean $(X,P,B)$ can be partitioned in its {\em connected components}: the {\em connected component} of a point $x\in X$ is
$$
\QQ_X(x)=\bigcup_{\alpha\in P}B(x,\alpha).
$$
Call a point $x$ of such a space $X$ {\em isolated}, if $\{x\}$ is the only ball
centred at $x$, i.e., when the connected component of $x$ is $\{x\}$. Moreover, denote by $\mathcal Iso(X)$ the family of all isolated points of the ballean $X$.

We call a subset $A$ of a ballean $(X,P,B)$ {\em bounded} if there exists $\alpha\in P$ such that, for every $y\in A$, $A\subseteq B(y,\alpha)$. The empty set is always bounded. A ballean is {\em bounded} if its support is bounded. In particular, a bounded ballean is connected.

If $(X,P,B)$ is a ballean, a subset $A$ of $X$ is {\em large in $X$} if there exists $\alpha\in P$ such that $B(A,\alpha)=X$.

If $\BBB=(X,P,B)$ is a ballean and $Y$ a subset of $X$, one can define the {\em subballean $\BBB|_Y=(Y,P,B_Y)$ on $Y$ induced by $\BBB$}, where $B_Y(y,\alpha)=B(y,\alpha)\cap Y$, for every $y\in Y$ and $\alpha\in P$.

Let $f,g\colon S\to(X,P,B)$ be two maps from a set to a ballean. Then $f$ and $g$ are {\em close} (and we write $f\sim g$) if there exists $\alpha\in P$ such that $f(x)\in B(g(x),\alpha)$, for every $x\in S$. Let $\BB_X=(X,P_X,B_X)$ and $\BB_Y=(Y,P_Y,B_Y)$ be two balleans. Then a map $f\colon X\to Y$ is called
\begin{enumerate}
	[$\bullet$]
	\item {\em bornologous} if for every radius $\alpha\in P_X$ there exists a radius $\beta\in P_Y$ such that $f(B_X(x,\alpha))\subseteq B_Y(f(x),\beta)$ for every point $x\in X$;
	\item {\em effectively proper} if for every $\alpha\in P_Y$ there exists a radius $\beta\in P_X$ such that $f^{-1}(B_Y(f(x),\alpha))\subseteq B_X(x,\beta)$ for every $x\in X$;
	\item an {\em asymorphism} if it is bijective and both $f$ and $f^{-1}$ are  bornologous (in this case, $\BB_X$ and $\BB_Y$ are {\em asymorphic} and we write $\BB_X\aaa\BB_Y$);
	\item a {\em coarse equivalence} if it is  bornologous and there exists another  bornologous map $g\colon Y\to X$ such that $f\circ g\sim id_Y$ and $g\circ f\sim id_X$, or, equivalently, if it is  bornologous, effectively proper and $f(X)$ is large in $\BBB_Y$ (in this case, $\BB_X$ and $\BB_Y$ are {\em coarsely equivalent}).
\end{enumerate}

Let $\BB$ and $\BB^\prime$ be two ballean structures on the same support $X$. We say that $\BB$ is {\em finer than} $\BB^\prime$ (and we write $\BB\prec\BB^\prime$) if $id_X\colon\BB\to\BB^\prime$ is  bornologous. Moreover, we identify $\BB$ and $\BB^\prime$ ($\BB=\BB^\prime$) if $\BB\prec\BB^\prime$ and $\BB^\prime\prec\BB$, i.e., $id_X\colon\BB\to\BB^\prime$ is an asymorphism. If $\BB$ is a bounded ballean on a set $X$, then, for every other ballean $\BB^\prime$ on the same support, $\BB^\prime\prec\BB$. Moreover, the {\em discrete ballean} $\BB_{dis}$ (i.e., the ballean $(X,\{\ast\},B_{dis})$ such that, for every point $x\in X$, $B_{dis}(x,\ast)=\{x\}$) is the finest ballean on $X$.

Denote by $\dsc(\BB)$ the number of connected components of the ballean $\BB$. Note that, if $\BB^\prime$ is another ballean on the same support, $\dsc(\BB^\prime)\leq\dsc(\BB)$ provided that $\BB\prec\BB^\prime$.

Let us now recall two very important examples of balleans.
\begin{example}\label{ex:ball}
\begin{enumerate}[(a)]
\item An {\em extended-metric}
on a set $X$ is a metric that can take also the value $\infty$. For every extended-metric space $(X,d)$, define the {\em metric ballean structure} $(X,\R_{\ge 0},B_d)$ as follows: for every $x\in X$ and $R\ge 0$, $B_d(x,R)$ is the usual closed metric ball centred in $x$ with radius $R$.
\item Let $X$ be a set and $\II$ be an ideal on $X$, i.e., a family of subsets of $X$ which is closed under taking finite unions and subsets. Then we define the {\em ideal ballean}, $\BB_{\II}=(X,\II,B_\II)$, on $X$, where, for every $x\in X$ and $K\in\II$,
$$
B_\II(x,K)=\begin{cases}
\begin{aligned}& K& \text{if $x\in K$,}\\
& \{x\} &\text{otherwise.}\end{aligned}
\end{cases}
$$
\item Let $G$ be a group and $\II$ be an ideal on $G$. Then $\II$ is a {\em group ideal} on $G$ if $\{e\}\in\II$ and, for every $F,K\in\II$, $F^{-1}\in\II$ and $FK\in\II$. For an abelian group $G$ we mostly use additive notation.

If $\kappa$ is an infinite regular cardinal, then $[G]^{<\kappa}=\{F\subseteq G:\lvert F\rvert<\kappa\}$ is a group ideal on the group $G$. The most relevant example is $[G]^{<\omega}$, the family of all finite subsets of $G$.

For every group ideal $\II$ of a group $G$, we define the {\em group ballean},  $\BB^G_{\II}=(X,\II,B^G_\II)$, where $B^G_\II(x,A)= \{x\}  \cup Ax \cup A^{-1}x$ for every $x\in X$ and $A\in\II$.
%

We denote the ballean $\BB^G_{[G]^{<\omega}}$ shortly by $\BB_G$ and we call it {\em finitary ballean} of $G$.

Groups endowed with balleans induced by group ideals have the following remarkable property.

Let $f\colon G\to H$ be a homomorphism between two groups and let $\II$ and $\mathcal J$ be two group ideals on $G$ and $H$ respectively. Then:
\begin{enumerate}[$\bullet$]
\item $f\colon\BB_{\II}^G\to\BB_{\mathcal J}^H$ is  bornologous if and only if $f(\II)=\{f(I): I\in\II\}\subseteq\mathcal J$;
\item $f\colon\BB_{\II}^G\to\BB_{\mathcal J}^H$ is effectively proper if and only if $f^{-1}(\mathcal J)=\{f^{-1}(J): J\in\mathcal J\}\subseteq\II$.
\end{enumerate}
In particular, $f\colon(G,\BB_G)\to(H,\BB_H)$ is  bornologous whenever $f$ is a homomorphism. Moreover, $f\colon(G,\BB_G)\to(H,\BB_H)$ is an asymorphism whenever $f$ is an isomorphism.
\end{enumerate}
\end{example}

Let us recall the definition of the asymptotic dimension of a ballean (see \cite{DraBel} for a comprehensive survey). A ballean $\BB=(X,P,B)$ has {\em asymptotic dimension at most $n$}, and we write $\asdim\BB\leq n$, if, for every $\alpha\in P$ there exists a uniformly bounded cover $\mathcal U=\mathcal U_0\cup\cdots\cup\mathcal U_n$ (i.e., $\bigcup\mathcal U=X$ and there exists a radius $\gamma\in P$ such that $U\subseteq B(x,\gamma)$ for every $U\in\mathcal U$ and every $x\in U$) such that, for every $i=0,\dots,n$ and every $U,V\in\mathcal U_i$, $B(U,\alpha)\cap V=\emptyset$.  For example, every bounded coarse space $X$ satisfies $\asdim X=0$, while, for every $n\in\N$, $\asdim\N^n=\asdim\Z^n=n$, where both spaces are equipped with their metric  ballean structure. The asymptotic dimension is a {\em coarse invariant}, i.e., invariant under coarse equivalence. Another important property of the asymptotic dimension is the following: if $X$ is a ballean and $Y$ a subballean of $X$, then $\asdim Y\leq\asdim X$.

 Let us now present an example of an infinity-dimensional ballean (see Proposition \ref{dimHamm}) which will be useful later in this paper. For an infinite cardinal $\kappa$, we consider the Hamming space
$$
\mathbb{H}_{\kappa}=\{f\in\{0,1\}^{\kappa} :\lvert\supt f\rvert<\omega\}
$$
endowed with the metric $h(f,g)=\lvert\supt f  \triangle\supt g\rvert$, for every $f,g\in\mathbb H_\kappa$, and denote  by $\mathbb{H} _{\kappa}^{\ast}$ the space $\mathbb{H} _{\kappa}$ with deleted the zero function.
	In the sequel, we identify $\mathbb H_\kappa$ with the set $[\kappa]^{<\omega}$, where every function $f\in\mathbb H_\kappa$ can be identified with its support.

\begin{proposition}\label{dimHamm} For every infinite cardinal $\kappa$, $\asdim\mathbb{H}_{\kappa}=\infty$.   \end{proposition}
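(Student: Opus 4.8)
The plan is to exhibit, for every $n\in\N$, an isometric copy of $\N^n$ (with the $\ell_1$-metric) inside $\mathbb H_\kappa$ and then to invoke the monotonicity of the asymptotic dimension under passing to subballeans, together with the quoted fact $\asdim\N^n=n$. Throughout I identify $\mathbb H_\kappa$ with $[\kappa]^{<\omega}$, so that the metric is $h(A,B)=\lvert A\triangle B\rvert$.

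First I would fix $n$ and choose pairwise disjoint countably infinite subsets $\kappa_1,\dots,\kappa_n\subseteq\kappa$ (possible since $\kappa$ is infinite), identifying each $\kappa_i$ with $\N=\{0,1,2,\dots\}$. For $a\in\N$ let $S_i(a)=\{0,1,\dots,a-1\}\subseteq\kappa_i$ denote the initial segment of length $a$ (with $S_i(0)=\emptyset$), and define
\[
\phi\colon\N^n\to[\kappa]^{<\omega},\qquad \phi(a_1,\dots,a_n)=\bigcup_{i=1}^n S_i(a_i).
\]

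The key computation, which I expect to be entirely routine, is that $\phi$ is an isometry onto its image. Within a single block, if $a\le b$ then $S_i(a)\triangle S_i(b)=\{a,\dots,b-1\}$ has cardinality $\lvert a-b\rvert$. Since the blocks $\kappa_i$ are pairwise disjoint and $S_i(a_i)\subseteq\kappa_i$, the symmetric difference $\phi(\mathbf a)\triangle\phi(\mathbf b)$ splits as the disjoint union $\bigsqcup_{i=1}^n\big(S_i(a_i)\triangle S_i(b_i)\big)$, whence $h(\phi(\mathbf a),\phi(\mathbf b))=\sum_{i=1}^n\lvert a_i-b_i\rvert$, the $\ell_1$-distance on $\N^n$. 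Thus $\phi$ is an isometric embedding.

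Finally I would assemble the pieces. Because for a metric ballean one has $B_h(y,R)\cap Y=\{z\in Y:h(z,y)\le R\}$, the subballean of $\mathbb H_\kappa$ induced on $Y=\phi(\N^n)$ coincides with the metric ballean of $(Y,h|_Y)$, which by the isometry is asymorphic to $\N^n$ with its metric ballean structure. Since the $\ell_1$, $\ell_2$ and $\ell_\infty$ metrics on $\N^n$ are bi-Lipschitz equivalent and hence induce the same ballean up to asymorphism, the quoted value $\asdim\N^n=n$ applies, and monotonicity of the asymptotic dimension under subballeans gives $\asdim\mathbb H_\kappa\ge\asdim\phi(\N^n)=n$. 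As $n$ was arbitrary, $\asdim\mathbb H_\kappa=\infty$. The only point requiring any care is the choice of \emph{initial segments} (rather than arbitrary sets of the prescribed cardinalities), which is precisely what forces the symmetric differences to add up additively across the blocks and yields the $\ell_1$-metric on the nose.
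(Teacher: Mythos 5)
Your proof is correct and follows essentially the same route as the paper's: both embed $\N^n$ isometrically into $\mathbb{H}_\kappa$ by sending a tuple to a union of initial segments taken inside $n$ pairwise disjoint infinite blocks, and then invoke $\asdim\N^n=n$ together with monotonicity of asymptotic dimension under subballeans. The only cosmetic difference is that the paper first reduces to $\mathbb{H}_\omega$ and indexes its segments as $A^i_m=\{a^i_0,\dots,a^i_m\}$, which changes nothing in the argument.
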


\begin{proof} It is enough to check that $\asdim\mathbb H_\omega=\infty$ . To see that  $\asdim\mathbb{H}_{\omega}=\infty$, we take an arbitrary  $n\in \mathbb{N}$, and we claim that there exists a copy of $\N^n$ in $\mathbb H_\kappa$. That shows $\asdim\mathbb H_\omega\ge n$. Let $\omega=W^{1}\cup  \cdots \cup W^{n}$ be a partition of $\omega$ in infinite subsets. Enumerate $W^{i}= \{a_m^i: m\in\N\}$ and define, for every $i=1,\dots,n$ and every $m\in\N$, $A_m^i=\{a_0^i,\dots,a^i_m\}$.
	
	We define a map $\varphi\colon\N^n\to\mathbb{H}_\kappa$ as follows: for every $(m_1,\dots,m_n)\in\N^n$,
	$$
	\varphi(m_1,\dots,m_n)=\bigcup_{i=1}^nA_{m_i}^i.
	$$
	We claim that $\varphi$ is an isometry onto its image. In fact, for every $(m_1,\dots,m_n),(m_1^\prime,\dots,m_n^\prime)\in\N^n$,
	$$
	h(\varphi(m_1,\dots,m_n),\varphi(m_1^\prime,\dots,m_n^\prime))=\sum_{i=1}^n\lvert A_{m_i}^i\triangle A_{m_i^\prime}^i\rvert=\sum_{i=1}^n\lvert m_i-m_i^\prime\rvert=d((m_1,\dots,m_n),(m_1^\prime,\dots,m_n^\prime)),
	$$
	which shows the desired property.
	%
\end{proof}

\subsection{Some categorical constructions}\label{sub:cat}

Let $\{\BBB_i=(X_i,P_i,B_i)\}_{i\in I}$ be a family of balleans.  Let $X=\Pi_iX_i$ and $p_i\colon X\to X_i$, for every $i\in I$, be the projection maps. We denote the subset $\bigcap_{i\in I}p_i^{-1}(A_i)$ of $X$ by $\Pi_{i\in I}A_i$, where $A_i\subseteq X_i$, for every index $i\in I$. The {\em product ballean structure} on $X$ can be described as follows: this is the ballean structure $\Pi_{i\in I}\BBB_i=(X,\Pi_{i\in I}P_i,B_X)$, where, for each $(x_i)_{i\in I}\in X$ and each $(\alpha_i)_i\in\Pi_iP_i$, we put
$$
B_X((x_i)_i,(\alpha_i)_i)=\Pi_{i\in I}B_i(x_i,\alpha_i).
$$

Let $\{\BB_i=(X_i,P_i,B_i)\}_{i\in I}$ be a family of balleans. Define their {\em coproduct} as the ballean $\coprod_i\BB_i=(X,P,B)$, where $X=\bigsqcup_iX_i$, $P=\{(\alpha_k)_{k\in K}: K\in[I]^{<\omega},\,\alpha_k\in P_k,\,\forall k\in K\}$, and
$$
B(i_j(x_j),(\alpha_k)_{k\in K})=\begin{cases}
\begin{aligned}& i_j(B_j(x_j,\alpha_j))&\text{if $j\in K$,}\\
& i_j(\{x_j\}) &\text{otherwise,}\end{aligned}
\end{cases}
$$
where $i_k\colon X_k\to X$ is the canonical inclusion, $i_j(x_j)\in X$, and $(\alpha_k)_{k\in K}\in P$.

If $X$ is a ballean with finitely many connected components, then $X$ coincides with the coproduct of its connected components. This may fail when
$X$ has infinitely many connected components, see Example \ref{ex:connected_components}.

\begin{remark}\label{RemIso}
	If $\varphi\colon X \to Y$ is an asymorphism, then $\varphi$ retracted to $\mathcal{I}so(X)$ is a bijections between $\mathcal{I}so(X)$ and $\mathcal{I}so(Y)$ (so $X\aaa Y$ trivially yields $|\mathcal{I}so(X)|=| \mathcal{I}so(Y)|$). Moreover, $\varphi$ determines a bijection between the family of non-trivial connected components  of $X$ and its counterpart in $Y$, so one can index both families with the same index set $I$ and write  $X \setminus \mathcal{I}so(X) = \bigcup_{i\in I}{\mathcal Q}_X(x_i)$ and $Y \setminus \mathcal{I}so(Y) = \bigcup_{i\in I}{\mathcal Q}_Y(y_i)$, assuming without loss of generality that $\varphi(x_i) = y_i$ and the restriction of $\varphi$ determines asymorphisms ${\mathcal Q}_X(x_i)\aaa {\mathcal Q}_Y(y_i)$. All these are only {\em necessary conditions} for $X\aaa Y$ (i.e., the bare fact that  $|\mathcal{I}so(X)|=| \mathcal{I}so(Y)|=:|I|$ and ${\mathcal Q}_X(x_i)\aaa {\mathcal Q}_Y(y_i)$ for all $i \in I$ need not imply $X\aaa Y$, see Example \ref{ex:connected_components}). Indeed, $X\aaa Y$ imposes a ``uniform coarseness" of the asymorphisms ${\mathcal Q}_X(x_i)\aaa {\mathcal Q}_Y(y_i)$.
	
	Moreover, if $I$ is finite, then they are indeed sufficient conditions, since, in that case, both $X$ and $Y$ coincide with the coproducts of their connected components.
\end{remark}


\begin{fact}\label{fact:ce_bounded_unbounded}
	Let $\varphi\colon X\to Y$ be a coarse equivalence. Then $\dsc X=\dsc Y$. Moreover, for every $x\in X$, $\QQ_X(x)$ is bounded if and only if $\QQ_Y(\varphi(x))$ is bounded.
\end{fact}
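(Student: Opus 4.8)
The plan is to show that a coarse equivalence $\varphi$ induces a bijection between the connected components of $X$ and those of $Y$, and that this bijection preserves boundedness. First I would record two elementary facts about bornologous maps and the component relation. Since $\varphi$ is bornologous, $\varphi(\QQ_X(x))\subseteq\QQ_Y(\varphi(x))$ for every $x\in X$: if $z\in B_X(x,\alpha)$, then $\varphi(z)\in B_Y(\varphi(x),\beta)$ for the radius $\beta$ witnessing bornologousness at $\alpha$, so $\varphi(z)\in\QQ_Y(\varphi(x))$. Consequently $\varphi$ descends to a well-defined map $\overline{\varphi}\colon\QQ_X(x)\mapsto\QQ_Y(\varphi(x))$ on the sets of connected components, and likewise the quasi-inverse $g\colon Y\to X$ descends to a map $\overline{g}$ with $\overline{g}(\QQ_Y(y))=\QQ_X(g(y))$.

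Next I would show that $\overline{\varphi}$ and $\overline{g}$ are mutually inverse. From $g\circ\varphi\sim id_X$ there is a fixed $\alpha_0\in P_X$ with $g(\varphi(x))\in B_X(x,\alpha_0)$ for all $x$, so $g(\varphi(x))\in\QQ_X(x)$ and hence $\overline{g}(\overline{\varphi}(\QQ_X(x)))=\QQ_X(g(\varphi(x)))=\QQ_X(x)$. Symmetrically, $\varphi\circ g\sim id_Y$ yields $\overline{\varphi}\circ\overline{g}=\mathrm{id}$ on the components of $Y$. Thus $\overline{\varphi}$ is a bijection, and $\dsc X=\dsc Y$.

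For the boundedness statement I need two ingredients. The first is that bornologous maps send bounded sets to bounded sets, which is immediate from the definitions. The second is that for a bounded set $A$ and any fixed radius $\gamma$, the enlargement $B(A,\gamma)$ is again bounded; this is a short computation using symmetry and upper multiplicativity (chaining $B(B(\cdot,\beta),\gamma)\subseteq B(\cdot,\delta)$). Now assume $\QQ_X(x)$ is bounded; then $\varphi(\QQ_X(x))$ is bounded and contained in $\QQ_Y(\varphi(x))$. For the reverse containment up to a fixed radius, take any $y'\in\QQ_Y(\varphi(x))$. Since $\overline{g}(\QQ_Y(\varphi(x)))=\QQ_X(x)$ and $\QQ_Y(y')=\QQ_Y(\varphi(x))$, we get $g(y')\in\QQ_X(x)$, whence $\varphi(g(y'))\in\varphi(\QQ_X(x))$; moreover $\varphi\circ g\sim id_Y$ gives a fixed $\alpha_1\in P_Y$ with $y'\in B_Y(\varphi(g(y')),\alpha_1)$. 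Therefore $\QQ_Y(\varphi(x))\subseteq B_Y(\varphi(\QQ_X(x)),\alpha_1)$, which is bounded by the two ingredients above. The converse implication follows by applying the same argument to the coarse equivalence $g$ at the point $\varphi(x)$ and using $\QQ_X(g(\varphi(x)))=\QQ_X(x)$.

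I expect the boundedness direction to be the delicate part: the subtlety is to argue that the \emph{whole} component $\QQ_Y(\varphi(x))$, and not merely the image $\varphi(\QQ_X(x))$, sits inside a single fixed-radius neighborhood of the bounded set $\varphi(\QQ_X(x))$. This is exactly where the quasi-inverse $g$, the closeness $\varphi\circ g\sim id_Y$, and the fact that $\overline{g}$ sends $\QQ_Y(\varphi(x))$ back into $\QQ_X(x)$ are all used simultaneously; the auxiliary observation that fixed enlargements of bounded sets remain bounded then closes the argument.
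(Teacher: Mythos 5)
Your proof is correct and complete. Note that the paper itself offers no proof of this statement: it is recorded as a \emph{Fact} and treated as routine, so there is no argument of the authors to compare yours against. What you supply is precisely the justification one would expect: the quasi-inverse $g$ makes $\QQ_X(x)\mapsto\QQ_Y(\varphi(x))$ and $\QQ_Y(y)\mapsto\QQ_X(g(y))$ into mutually inverse maps on the sets of connected components (using $g\circ\varphi\sim id_X$ and $\varphi\circ g\sim id_Y$ together with the fact that bornologous maps respect the component relation), which gives $\dsc X=\dsc Y$; and you correctly identify the only delicate point in the boundedness transfer, namely that the \emph{entire} component $\QQ_Y(\varphi(x))$, not just $\varphi(\QQ_X(x))$, must be captured, which your containment $\QQ_Y(\varphi(x))\subseteq B_Y(\varphi(\QQ_X(x)),\alpha_1)$ handles via the closeness radius of $\varphi\circ g\sim id_Y$. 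Your two auxiliary ingredients (bornologous images of bounded sets are bounded; fixed-radius enlargements of bounded sets are bounded, by symmetry and upper multiplicativity) are both correct under the paper's definition of boundedness, and the converse direction by symmetry of the roles of $\varphi$ and $g$ is sound.
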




\subsection{Some examples of ballean classes: thin and cellular balleans}\label{suc:cell}

Let $\BB=(X,P,B)$ be a ballean. A subset $Y$ of $X$ is {\em thin} if, for every $\alpha\in P$, there exists a bounded subset $V\subseteq X$ such that $\lvert B(x,\alpha)\cap Y\rvert=1$, for every $x\in Y\setminus V$. A ballean is thin if its whole support is thin.

Let $\mathcal{I}$  be an ideal on $X$. The ideal ballean $\BB_{\II}=(X,\II,B_\II)$ is connected if and only if $\II$ is a cover or, equivalently, $[X]^{<\omega}\subseteq\II$.

\begin{fact}\label{fact:asy_ideal}
Let $X$ and $Y$ be two non-empty sets, $\II$ and $\mathcal J$ be two ideals of $X$ and $Y$, respectively, and $f\colon X\to Y$ be a map. Then $f$ is an asymorphism between $\BB_\II$ and $\BB_{\mathcal J}$ if and only if $f$ is a bijection such that $f(\II)\subseteq\mathcal J$ and $f^{-1}(\mathcal J)\subseteq\II$, for every $I\in \II$ and $J\in\mathcal J$.
\end{fact}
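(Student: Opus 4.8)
The plan is to reduce the whole statement to a single characterization of bornologousness for ideal balleans and then invoke symmetry. Concretely, I would first isolate the auxiliary claim that, for a bijection $f\colon X\to Y$, the map $f\colon\BB_\II\to\BB_{\mathcal J}$ is bornologous if and only if $f(\II)\subseteq\mathcal J$. Granting this, applying it to $f$ yields the condition $f(\II)\subseteq\mathcal J$, while applying it to the bijection $f^{-1}\colon Y\to X$ (viewed as a map $\BB_{\mathcal J}\to\BB_\II$, with the roles of $\II$ and $\mathcal J$ exchanged) yields $f^{-1}(\mathcal J)\subseteq\II$. Since an asymorphism is by definition a bijection whose direct and inverse maps are both bornologous, the Fact then follows immediately from the two applications of the claim.

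For the sufficiency half of the auxiliary claim I would argue directly from the definition of $B_\II$. Assume $f(\II)\subseteq\mathcal J$ and fix a radius $I\in\II$; I take $J:=f(I)\in\mathcal J$ as the witnessing radius and check $f(B_\II(x,I))\subseteq B_{\mathcal J}(f(x),J)$ for every $x\in X$ by splitting into the two cases defining the ideal ball. If $x\in I$, then $B_\II(x,I)=I$ and $f(x)\in f(I)=J$, so $B_{\mathcal J}(f(x),J)=J$ and the inclusion reduces to $f(I)\subseteq J$, which holds by construction; if $x\notin I$, then $B_\II(x,I)=\{x\}$ and the inclusion is the trivial $\{f(x)\}\subseteq B_{\mathcal J}(f(x),J)$. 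This direction uses neither injectivity nor surjectivity.

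For the necessity half I would use bornologousness together with injectivity. Fix $I\in\II$ and let $J\in\mathcal J$ satisfy $f(B_\II(x,I))\subseteq B_{\mathcal J}(f(x),J)$ for all $x$. Evaluating at any $x\in I$ gives $f(I)\subseteq B_{\mathcal J}(f(x),J)\subseteq J\cup\{f(x)\}$. When $\lvert I\rvert\ge 2$ I would exploit injectivity: for distinct $x',x''\in I$, every $y\in f(I)$ lies in both $J\cup\{f(x')\}$ and $J\cup\{f(x'')\}$, and since $f(x')\ne f(x'')$ this forces $y\in J$; hence $f(I)\subseteq J$ and $f(I)\in\mathcal J$, the case $I=\emptyset$ being trivial. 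The one genuinely delicate point — and the step I expect to be the real obstacle — is a singleton radius $I=\{x_0\}$, where the argument only produces $f(I)=\{f(x_0)\}$, so that concluding $\{f(x_0)\}\in\mathcal J$ requires $\mathcal J$ to contain all singletons. This is automatic in the intended setting where the ideals are covers (equivalently, the ideal balleans are connected, $[X]^{<\omega}\subseteq\II$ and $[Y]^{<\omega}\subseteq\mathcal J$); alternatively, one observes that singleton and empty radii contribute only the trivial balls $B_\II(x_0,\{x_0\})=\{x_0\}$ and so do not affect the ballean structure, whence it suffices to treat radii of cardinality at least $2$. Running this necessity argument verbatim with $f^{-1}$ in place of $f$ delivers $f^{-1}(\mathcal J)\subseteq\II$ and completes the equivalence.
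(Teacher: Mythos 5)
The paper states this as a bare Fact and gives no proof at all, so there is no argument of the authors' to compare yours against; your route --- asymorphism equals bijectivity plus bornologousness of $f$ and of $f^{-1}$, reduced to a single characterisation of bornologousness for ideal balleans applied twice with the roles of $\II$ and $\mathcal J$ exchanged --- is the natural (and surely the intended) one, and both halves of your auxiliary claim are argued correctly: sufficiency with the witness radius $J:=f(I)$, necessity via the two-centre injectivity argument when $\lvert I\rvert\ge 2$.

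The point you flag as delicate is in fact a defect of the statement as printed, not of your proof. If $\mathcal J$ misses a singleton, the ``only if'' direction is literally false: take $X=Y$, $f=\mathrm{id}_X$, $\II=\{\emptyset,\{x_0\}\}$ and $\mathcal J=\{\emptyset\}$ (both are ideals in the paper's sense, since the paper does not require ideals to be covers); every ball of $\BB_\II$ and of $\BB_{\mathcal J}$ is a singleton, so $f$ is an asymorphism, yet $f(\II)\not\subseteq\mathcal J$. Your first repair --- requiring the ideals to contain all singletons, equivalently requiring $\BB_\II$ and $\BB_{\mathcal J}$ to be connected --- is the correct reading, and it is harmless for the paper: in every application of the Fact (the corollary immediately following it, where the ideals are $[X]^{<\omega}$ and $[Y]^{<\omega}$, and Corollary \ref{coro:Z_Zp_asy}, where they are the ideals of finite subsets of thin connected components) the ideals do contain all singletons. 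Do note, however, that your alternative repair (restricting attention to radii of size at least $2$) proves a corrected variant of the Fact --- $f$ is an asymorphism if and only if $f$ is a bijection with $f(I)\in\mathcal J$ and $f^{-1}(J)\in\II$ for all $I\in\II$ and $J\in\mathcal J$ of size at least $2$ --- rather than the literal inclusions $f(\II)\subseteq\mathcal J$ and $f^{-1}(\mathcal J)\subseteq\II$, which can still fail in the degenerate situation above; so, cited verbatim, the Fact needs the connectedness hypothesis, and your proof is then complete.
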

\begin{corollary}
Let $X$ and $Y$ be two sets endowed with the ideal ballean structures induced by finite subsets. Then $X$ and $Y$ are asymorphic if and only if $\lvert X\rvert =\lvert Y\rvert$.
\end{corollary}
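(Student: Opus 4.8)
The plan is to reduce everything to Fact~\ref{fact:asy_ideal}, which already characterises asymorphisms between ideal balleans in purely set-theoretic terms: a map $f$ is an asymorphism between $\BB_\II$ and $\BB_{\mathcal J}$ precisely when $f$ is a bijection with $f(\II)\subseteq\mathcal J$ and $f^{-1}(\mathcal J)\subseteq\II$. Here the relevant ideals are $\II=[X]^{<\omega}$ and $\mathcal J=[Y]^{<\omega}$, the ideals of finite subsets, so the two containment conditions simply say that $f$ must send finite sets to finite sets and that $f^{-1}$ must do the same. The whole corollary is then a matter of observing that a bijection satisfies these conditions for free.

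For the non-trivial implication I would assume $\lvert X\rvert=\lvert Y\rvert$ and fix any bijection $f\colon X\to Y$. Since $f$ is a bijection, for every finite $F\subseteq X$ the image $f(F)$ is again finite, that is, $f([X]^{<\omega})\subseteq[Y]^{<\omega}$; applying the identical observation to the bijection $f^{-1}$ yields $f^{-1}([Y]^{<\omega})\subseteq[X]^{<\omega}$. Hence both hypotheses of Fact~\ref{fact:asy_ideal} hold automatically, and $f$ is an asymorphism between the two finitary ideal balleans, so $X\aaa Y$. Conversely, if $X\aaa Y$, then by Fact~\ref{fact:asy_ideal} any witnessing asymorphism is in particular a bijection $X\to Y$, whence $\lvert X\rvert=\lvert Y\rvert$.

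There is essentially no obstacle here: all the content is packaged in Fact~\ref{fact:asy_ideal}, and the single point worth isolating is that a bijection preserves finiteness in both directions, which is exactly what makes the finite-subset ideals render the containment conditions vacuous. The statement is therefore best read as an immediate illustration of how the combinatorial criterion of Fact~\ref{fact:asy_ideal} specialises when $\II$ and $\mathcal J$ are the ideals of finite subsets.
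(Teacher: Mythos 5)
Your proposal is correct and follows exactly the route the paper intends: the corollary is stated as an immediate consequence of Fact~\ref{fact:asy_ideal} (the paper supplies no separate proof), and your observation that a bijection automatically preserves finiteness in both directions, making the containment conditions $f(\II)\subseteq\mathcal J$ and $f^{-1}(\mathcal J)\subseteq\II$ vacuous for the ideals $[X]^{<\omega}$ and $[Y]^{<\omega}$, is precisely the missing verification. Nothing further is needed.
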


Among all characterisations of thinness (for example, see \cite{b5,b3}), let us remind the following.
\begin{proposition}\label{prop:charact_thin}
A connected ballean $\BB=(X,P,B)$ is thin if and only if $\BB=\BB_{\II}$, where $\II$ is the ideal of bounded subsets of $\BB$.
\end{proposition}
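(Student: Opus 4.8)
# Proof Proposal for Proposition~\ref{prop:charact_thin}

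The plan is to prove both implications by identifying, in each direction, the correct candidate for the ideal $\II$ and then checking that the ball-maps coincide. Throughout I will use the fact that a \emph{connected} ballean has a well-defined notion of bounded subset, and that in an ideal ballean $\BB_\II$ the bounded subsets are exactly the members of $\II$ (together with their subsets, but $\II$ is already closed under subsets). This latter observation is itself worth isolating as a preliminary step, since it anchors the whole argument: in $\BB_\II$, a set $A$ is bounded precisely when there is $K \in \II$ with $B_\II(x,K) \subseteq K \cup \{x\}$ absorbing all of $A$ uniformly, which forces $A \subseteq K$ for some $K \in \II$, hence $A \in \II$.

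For the \emph{backward} direction, suppose $\BB = \BB_\II$ where $\II$ is the ideal of bounded subsets of $\BB$. I must show $\BB$ is thin, i.e.\ that the whole support $X$ is thin. Fix a radius, which in an ideal ballean is a member $K \in \II$. I take the bounded witness set $V := K$ itself. Then for any point $x \in X \setminus K$, the definition of $B_\II$ gives $B_\II(x,K) = \{x\}$, so $\lvert B_\II(x,K) \cap X \rvert = \lvert \{x\} \rvert = 1$. This verifies the thinness condition directly from the shape of the ideal ball-map, and this direction should be essentially immediate once the definitions are unwound.

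For the \emph{forward} direction, assume $\BB = (X,P,B)$ is connected and thin, and let $\II$ denote the ideal of bounded subsets of $\BB$; I want to show $\BB = \BB_\II$, meaning $\id_X \colon \BB \to \BB_\II$ is an asymorphism. I first note $\II$ is genuinely an ideal (closed under subsets trivially, and closed under finite unions because a finite union of bounded sets in a connected ballean is bounded, using upper multiplicativity), and it is a cover because connectedness makes every singleton, hence every finite set, bounded — so by the remark preceding the proposition, $\BB_\II$ is connected. To get $\BB \prec \BB_\II$, fix $\alpha \in P$; thinness yields a bounded $V$ with $\lvert B(x,\alpha) \cap X \rvert = 1$ for $x \notin V$, i.e.\ $B(x,\alpha) = \{x\}$ for such $x$. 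Then for any $x$, the ball $B(x,\alpha)$ is contained in $B(V,\alpha) \cup \{x\}$; the set $K := B(V,\alpha)$ is bounded (again by upper multiplicativity, as the $\alpha$-enlargement of a bounded set is bounded), so $K \in \II$, and one checks $B(x,\alpha) \subseteq B_\II(x,K)$ by splitting into the cases $x \in K$ and $x \notin K$. For the reverse inequality $\BB_\II \prec \BB$, given $K \in \II$ I use boundedness of $K$ to find $\beta \in P$ with $K \subseteq B(y,\beta)$ for all $y \in K$, and then $B_\II(x,K) \subseteq B(x,\beta)$ holds in both cases of the definition of $B_\II$.

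I expect the main obstacle to be the forward direction, specifically the bookkeeping in showing $B(x,\alpha) \subseteq B_\II(x,K)$ for \emph{all} $x$ simultaneously with a single $K \in \II$: the subtlety is that for $x \in V$ the ball $B(x,\alpha)$ may be large, so one must absorb these ``exceptional'' centres into the bounded enlargement $B(V,\alpha)$, and then argue carefully that for $x \in K$ one has $B(x,\alpha) \subseteq K = B_\II(x,K)$ while for $x \notin K \supseteq V$ thinness collapses the ball to $\{x\}$. Verifying that $B(V,\alpha)$ is bounded — which relies on upper multiplicativity combined with the boundedness of $V$ — is the technical crux, and I would make that the first lemma-style computation.
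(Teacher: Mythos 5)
The paper never actually proves this proposition: it is stated as a reminder and attributed to the cited references (Protasov--Zarichnyi and the hyperballeans paper), so there is no internal proof to compare yours against. Judged on its own, your argument is correct and complete. The backward direction (outside the radius $K$ the balls of $\BB_\II$ are singletons, and $K$ itself serves as the bounded witness) and the forward direction (for a radius $\alpha$ with thinness witness $V$, take $K:=B(V,\alpha)$, check that $K$ is bounded using symmetry and upper multiplicativity, then verify $B(x,\alpha)\subseteq B_\II(x,K)$ by the cases $x\in V$, $x\in K\setminus V$, $x\notin K$, and conversely $B_\II(x,K)\subseteq B(x,\beta)$ where $\beta$ witnesses boundedness of $K$) are exactly the right ingredients, and your identification of the boundedness of $B(V,\alpha)$ as the technical crux is accurate.

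Two small points of hygiene. First, your preliminary claim that the bounded subsets of an arbitrary ideal ballean $\BB_\II$ are exactly the members of $\II$ is not quite true in general: every singleton is bounded in $\BB_\II$ (since $x\in B_\II(x,K)$ always), whether or not it belongs to $\II$. The claim is correct when $\II$ is a cover, which is the case here because connectedness of $\BB$ makes every singleton bounded, hence a member of the ideal of bounded sets; in any case you never use the claim in an essential way. Second, recall that in this paper $\BB=\BB_\II$ means \emph{mutual refinement} (the identity map is an asymorphism in both directions), not literal equality of ball maps; so in the backward direction you should formally transfer thinness from $\BB_\II$ to $\BB$: given $\alpha\in P$, choose $K\in\II$ with $B(x,\alpha)\subseteq B_\II(x,K)$ for every $x$; then $B(x,\alpha)=\{x\}$ whenever $x\notin K$, and $K$ is bounded in $\BB$ by the definition of $\II$. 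This is a two-line fix and does not affect the substance of your proof.
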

The notion of thinnes may seem too restrictive. In fact, one can easily see that a non-connected ballean is thin if and only if all but one connected components are trivial and the non-trivial one is thin. Since the balleans we are considering in this paper are non-connected, we define the following class. A ballean $\BB$ is {\em weakly thin} if every connected component is thin.

An important class of balleans is defined as follows.  For a ballean $(X,P,B)$, $n\in \N$, $x\in X$ and $\alpha\in P$, we let
$$
B^n(x,\alpha)=\underbrace{B(B(\cdots B(B}_{\text{$n$ times}}(x,\alpha),\alpha)\cdots,\alpha),\alpha) \ \  \mbox{ and }\ \  B^{\square}(x,\alpha):=\bigcup_{n=1}^\infty B^n(x,\alpha).
$$

\begin{definition}\label{def_cell} The triple $\BBB^{\square}=(X,P,B^{\square})$ is a ballean called the {\em cellularization of $\BBB$}.
%
%
%
The ballean $\BBB$ is said to be {\em cellular} if $\BBB=\BBB^{\square}$.
\end{definition}

Cellular balleans are precisely those with asymptotic dimension 0. Thin balleans and, in particular, bounded balleans are cellular. Moreover, if a weakly thin ballean has only a finite number of connected components, it is cellular. However, this property doesn't hold for weakly thin balleans with infinite number of connected components (see Example \ref{ex:connected_components}).

\begin{example}\label{ex:connected_components} Consider the ballean $X=\bigsqcup_{n\in\N}[0,n]$, endowed with the natural extended metric defined as follows: if $x,y$ belong to the same component $I_n :=[0,n]$, then $d(x,y) = |x-y|$, if $x,y$ belong to distinct components, we put $d(x,y) =\infty$.
 Then $\asdim X>0$ and thus it is not cellular, although it is weakly thin. Finally, the ballean $Y=\coprod_n[0,n]$ is cellular and thus $X$ and $Y$ are not coarsely equivalent (and, in particular, not asymorphic).
\end{example}

\section{The exponential and the logarithmic hyperballeans}\label{sec:hyper}

\subsection{The exponential hyperballean $\exp X$}\label{sub:hyper}

Recall the following definition from \cite{b5}.


\begin{definition}\label{1.1}
 For a  ballean $\mathcal{B}=(X, P, B)$, we  consider the {\it hyperballean} $\exp\mathcal{B} = (\exp(X), P, \exp B)$, where
	$$
	\exp B(Y, \alpha )= \{Z\in\exp X: Z\subseteq B(Y, \alpha ), \  Y\subseteq B(Z, \alpha )\},
	$$
for every $Y\in\exp X$ and every $\alpha\in P$.
\end{definition}

Note that, if $X$ and $Y$ are two asymorphic balleans, then $\exp X\aaa\exp Y$.

We use also the subballean $\mathcal{B}^{\flat}=(X^{\flat}, P, B^{\flat})$ of  $\exp\mathcal{B}$, where $X^{\flat}$ is the family of all non-empty bounded subsets of $X$ and $B^{\flat}$ is the restriction of $\exp B$ to $X^{\flat}$. This ballean was already defined in \cite{ProPro}.

It is trivial that, for every ballean $X$,
\begin{equation}\label{pnt:vs:hyper}
\asdim X\leq\asdim\exp X
\end{equation}
(in fact the subballean of $\exp X$ whose support is the family of all singletons of $X$ is asymorphic to $X$, according to \cite{b5}). The equality $\asdim X=\asdim\exp X$ is not available in general and may strongly fail (see \cite{rs}).
However, that equality is true for asymptotic dimension $0$, as we will show in Proposition \ref{prop:exp_cellular}.
\begin{lemma}\label{lemma:cellular_exp}
Let $(X,P,B)$ be a ballean. Let $\emptyset\neq Y\subseteq X$ and $\alpha$ be an arbitrary element of $\exp X$ and of $P$, respectively. Then
\begin{equation}\label{eq:cell}
(\exp B)^n(Y,\alpha)\subseteq\{Z\in\exp X : Z\subseteq B^n(Y,\alpha),\,Y\subseteq B^n(Z,\alpha)\}.
\end{equation}
Hence, in particular,
\begin{equation*}
(\exp B)^\square(Y,\alpha)\subseteq\{Z\in\exp X : Z\subseteq B^\square(Y,\alpha),\,Y\subseteq B^\square(Z,\alpha)\}=\exp(B^\square)(Y,\alpha).
\end{equation*}
\end{lemma}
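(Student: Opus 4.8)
The plan is to prove the inclusion \eqref{eq:cell} by induction on $n$, the case $n=1$ being exactly Definition \ref{1.1}. Two elementary observations will drive the induction. First, for a fixed radius $\alpha$ the set-enlargement operator $A\mapsto B(A,\alpha)=\bigcup_{y\in A}B(y,\alpha)$ is monotone, i.e.\ $A\subseteq A'$ implies $B(A,\alpha)\subseteq B(A',\alpha)$; consequently each iterate $A\mapsto B^n(A,\alpha)$ is monotone as well. Second, since iteration is just composition of this operator with itself, one has the ``semigroup'' identities
$$
B^{n+1}(A,\alpha)=B\bigl(B^n(A,\alpha),\alpha\bigr)=B^n\bigl(B(A,\alpha),\alpha\bigr),
$$
and it is the second form (applying $B$ first) that will be needed below.

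For the inductive step, assume \eqref{eq:cell} holds for $n$ and pick $Z\in(\exp B)^{n+1}(Y,\alpha)=\exp B\bigl((\exp B)^n(Y,\alpha),\alpha\bigr)$. Unravelling the convention $\exp B(\mathcal A,\alpha)=\bigcup_{W\in\mathcal A}\exp B(W,\alpha)$, there is a set $W\in(\exp B)^n(Y,\alpha)$ with $Z\in\exp B(W,\alpha)$, so that $Z\subseteq B(W,\alpha)$ and $W\subseteq B(Z,\alpha)$. By the inductive hypothesis $W\subseteq B^n(Y,\alpha)$ and $Y\subseteq B^n(W,\alpha)$. I would then chain the inclusions using monotonicity: from $Z\subseteq B(W,\alpha)$ and $W\subseteq B^n(Y,\alpha)$ I get $Z\subseteq B\bigl(B^n(Y,\alpha),\alpha\bigr)=B^{n+1}(Y,\alpha)$; and from $Y\subseteq B^n(W,\alpha)$ together with $W\subseteq B(Z,\alpha)$ I get $Y\subseteq B^n\bigl(B(Z,\alpha),\alpha\bigr)=B^{n+1}(Z,\alpha)$, this last equality being precisely the ``apply $B$ first'' form of the semigroup identity. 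This is exactly the statement of \eqref{eq:cell} for $n+1$, closing the induction.

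Finally, the displayed consequence is obtained by passing to the union over $n$. If $Z\in(\exp B)^\square(Y,\alpha)=\bigcup_{n\ge 1}(\exp B)^n(Y,\alpha)$, then $Z\in(\exp B)^n(Y,\alpha)$ for some $n$, whence $Z\subseteq B^n(Y,\alpha)\subseteq B^\square(Y,\alpha)$ and $Y\subseteq B^n(Z,\alpha)\subseteq B^\square(Z,\alpha)$; by the very definition of the hyperballean operator applied to the ball function $B^\square$ this says $Z\in\exp(B^\square)(Y,\alpha)$, giving both the claimed inclusion and the final equality. I do not expect any genuine obstacle here: the only point requiring a little care is invoking the correct (``$B$ first'') form of the semigroup identity in the second half of the inductive step, since the naive form $B^{n+1}(Z,\alpha)=B\bigl(B^n(Z,\alpha),\alpha\bigr)$ would not directly match the inclusion $Y\subseteq B^n(B(Z,\alpha),\alpha)$ at hand.
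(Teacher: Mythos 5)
Your proof is correct and takes essentially the same approach as the paper: induction on $n$, splitting the $(n+1)$-fold iterate into one $\exp B$-step plus $n$ steps, then chaining the resulting inclusions via monotonicity of $A\mapsto B(A,\alpha)$. The only (immaterial) difference is that you peel off the single step at the end of the chain, writing $(\exp B)^{n+1}(Y,\alpha)=\exp B\bigl((\exp B)^n(Y,\alpha),\alpha\bigr)$, while the paper peels it off at the beginning (taking $W\in\exp B(Y,\alpha)$ with $Z\in(\exp B)^n(W,\alpha)$), which merely swaps where the two forms of the identity $B^{n+1}(A,\alpha)=B(B^n(A,\alpha),\alpha)=B^n(B(A,\alpha),\alpha)$ get used.
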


\begin{proof} We prove \eqref{eq:cell} by induction. The base step $n=1$ is trivial.
%
Suppose that \eqref{eq:cell} holds for some $n\geq $ and let $Z\in\exp B^{n+1}(Y,\alpha)$. There exists $W\in\exp B(Y,\alpha)$ such that $Z\in\exp B^n(W,\alpha)$ and so, by using the inductive hypothesis,
$$
W\subseteq B(Y,\alpha),\quad Y\subseteq B(W,\alpha),\quad Z\subseteq B^n(W,\alpha),\quad W\subseteq B^n(Z,\alpha),
$$
from which the claim follows.
\end{proof}

\begin{proposition}\label{prop:exp_cellular}
If a ballean $(X,P,B)$ is cellular, then $\exp X$ is cellular.
\end{proposition}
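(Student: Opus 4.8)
The plan is to deduce cellularity of $\exp X$ directly from Lemma~\ref{lemma:cellular_exp}, after unwinding what cellularity means at the level of radii. Recall that a ballean $(X,P,B)$ is cellular precisely when $id_X\colon(X,P,B^\square)\to(X,P,B)$ is bornologous (the reverse comparison $B\subseteq B^\square$ being automatic); that is, for every $\alpha\in P$ there is a \emph{single} $\beta\in P$ with $B^\square(x,\alpha)\subseteq B(x,\beta)$ for all $x\in X$. To prove $\exp X$ cellular I must establish the same kind of statement for $\exp B$: for every $\alpha\in P$ there is $\gamma\in P$ with $(\exp B)^\square(Y,\alpha)\subseteq\exp B(Y,\gamma)$ for every $Y\in\exp X$.

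So fix $\alpha\in P$ and, using cellularity of $X$, choose $\beta\in P$ with $B^\square(x,\alpha)\subseteq B(x,\beta)$ for every $x\in X$. I claim $\gamma=\beta$ works. By Lemma~\ref{lemma:cellular_exp},
\[
(\exp B)^\square(Y,\alpha)\subseteq\exp(B^\square)(Y,\alpha)=\{Z\in\exp X: Z\subseteq B^\square(Y,\alpha),\ Y\subseteq B^\square(Z,\alpha)\}.
\]
Now I push the pointwise inclusion through the unions defining $B^\square$ on subsets: since $B^\square(A,\alpha)=\bigcup_{a\in A}B^\square(a,\alpha)$ for every $A\subseteq X$, the choice of $\beta$ gives $B^\square(A,\alpha)\subseteq\bigcup_{a\in A}B(a,\beta)=B(A,\beta)$. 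Applying this with $A=Y$ and with $A=Z$, any $Z$ in the displayed set satisfies $Z\subseteq B^\square(Y,\alpha)\subseteq B(Y,\beta)$ and $Y\subseteq B^\square(Z,\alpha)\subseteq B(Z,\beta)$, i.e.\ $Z\in\exp B(Y,\beta)$. This yields $(\exp B)^\square(Y,\alpha)\subseteq\exp B(Y,\beta)$ for all $Y$, which is exactly cellularity of $\exp X$.

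There is no serious obstacle here once Lemma~\ref{lemma:cellular_exp} is available: the whole argument is the observation that the lemma converts an $\exp B$-chain into a $B^\square$-bound, at which point cellularity of $X$ collapses $B^\square$ back into a single $B$-ball. The only point requiring care is the \emph{uniformity} of the radius, namely that the same $\beta$ absorbs $B^\square(\cdot,\alpha)$ into $B(\cdot,\beta)$ simultaneously for all centres; this is precisely what the definition of bornologous map guarantees, and I must be sure that this pointwise inclusion commutes with the union $B^\square(A,\alpha)=\bigcup_{a\in A}B^\square(a,\alpha)$ over the elements of the subsets $Y$ and $Z$ (a routine induction on $n$ for $B^n$). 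Granting these, the chosen $\beta$ serves as the required radius and the proof is complete; as a by-product this gives $\asdim\exp X=0$ whenever $\asdim X=0$, as announced.
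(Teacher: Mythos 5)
Your proof is correct and takes essentially the same route as the paper's: fix $\alpha$, use cellularity of $X$ to obtain a single $\beta$ with $B^\square(x,\alpha)\subseteq B(x,\beta)$ for all $x$, invoke Lemma~\ref{lemma:cellular_exp}, and then absorb $B^\square(\cdot,\alpha)$ into $B(\cdot,\beta)$ at the level of subsets to land in $\exp B(Y,\beta)$. The only difference is presentational: you spell out the step $B^\square(A,\alpha)=\bigcup_{a\in A}B^\square(a,\alpha)\subseteq B(A,\beta)$, which the paper's displayed chain of inclusions leaves implicit.
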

\begin{proof}
Let $\alpha$ be an arbitrary radius and let $\beta\in P$ be a radius such that $B^\square(x,\alpha)\subseteq B(x,\beta)$, for every $x\in X$. Then, by using Lemma \ref{lemma:cellular_exp}, for every $Y\in\exp X$,
\begin{align*}
(\exp B)^\square(Y,\alpha)&\,\subseteq\{Z\in\exp X :  Y\subseteq B^\square(Z,\alpha),\,Z\subseteq B^\square(Y,\alpha)\}\\
&\,\subseteq\{Z\in\exp X : Y\subseteq B(Z,\beta),\,Z\subseteq B(Y,\beta)\}=\exp B(Y,\beta).
\end{align*}
\end{proof}

In this paper we are mostly interested in some special hyperballeans, namely $\exp\BB_G$, where $G$ is a group. The following trivial fact will be used in the sequel.

\begin{fact}\label{fact:finite_balls}
Let $G$ be a group and $e$ be its identity. Then every ball of $\exp\BB_G$ centred in $\{e\}$ is finite. Hence, for every subset $X$ of $G$ such that $X\in\QQ_{\exp\BB_G}(\{e\})$ and every finite subset $F$ of $G$, the ball $\exp B_G(X,F)$ is finite.
\end{fact}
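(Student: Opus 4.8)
The plan is to unwind the two definitions involved—the finitary ballean $\BB_G=\BB^G_{[G]^{<\omega}}$ and its exponential hyperballean from Definition \ref{1.1}—and to exploit the elementary observation that the ball map of $\BB_G$ carries finite sets with finite radii to finite sets. Finiteness of the hyperballs then comes for free, because by definition $\exp B_G(Y,A)$ is a family of subsets of the \emph{single} set $B_G(Y,A)$, so it is finite as soon as $B_G(Y,A)$ is.

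First I would treat the balls centred at $\{e\}$. For a finite radius $A\in[G]^{<\omega}$ one computes directly, using $B^G_\II(x,A)=\{x\}\cup Ax\cup A^{-1}x$,
$$
B_G(\{e\},A)=\{e\}\cup Ae\cup A^{-1}e=\{e\}\cup A\cup A^{-1},
$$
which is a finite subset of $G$. By Definition \ref{1.1}, every $Z\in\exp B_G(\{e\},A)$ satisfies $Z\subseteq B_G(\{e\},A)$, so
$$
\exp B_G(\{e\},A)\subseteq\exp\bigl(B_G(\{e\},A)\bigr)=\mathcal P\bigl(B_G(\{e\},A)\bigr)\setminus\{\emptyset\},
$$
a finite family. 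Hence every ball of $\exp\BB_G$ centred at $\{e\}$ is finite.

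For the ``Hence'' part, the key point is that membership in the connected component $\QQ_{\exp\BB_G}(\{e\})$ forces the set to be finite. Indeed, $X\in\QQ_{\exp\BB_G}(\{e\})=\bigcup_{A\in[G]^{<\omega}}\exp B_G(\{e\},A)$ means $X\in\exp B_G(\{e\},A)$ for some finite $A$, whence $X\subseteq B_G(\{e\},A)$ is finite by the computation above. Now fix a finite $F\subseteq G$. Since both $X$ and $F$ are finite,
$$
B_G(X,F)=X\cup FX\cup F^{-1}X
$$
is finite as a finite union of finite sets, and again $\exp B_G(X,F)\subseteq\exp\bigl(B_G(X,F)\bigr)$ is finite.

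There is essentially no genuine obstacle here: the statement reduces entirely to the fact that the ball map $B_G(\,\cdot\,,\,\cdot\,)$ sends finite arguments to finite sets, combined with the tautology that $\exp B_G(Y,A)$ lives inside the (finite) power set of $B_G(Y,A)$. The only step requiring a moment's care is the passage between the two assertions, namely the remark that any $X$ in the connected component of $\{e\}$ is automatically a finite subset of $G$; this is precisely what lets the second statement be deduced from the first.
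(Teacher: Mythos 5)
Your proof is correct, and the first assertion is handled exactly as in the paper: a set $Z\in\exp B_G(\{e\},A)$ must satisfy $Z\subseteq B_G(\{e\},A)=\{e\}\cup A\cup A^{-1}$, so the hyperball sits inside the (finite) power set of a finite set. Where you diverge is in deducing the second assertion. The paper does not recompute anything: it simply cites upper multiplicativity of $\exp\BB_G$, i.e., since $X\in\exp B_G(\{e\},A)$ for some finite $A$, there is a radius $\gamma\in[G]^{<\omega}$ with $\exp B_G(\exp B_G(\{e\},A),F)\subseteq\exp B_G(\{e\},\gamma)$, so the ball around $X$ is swallowed by a ball around $\{e\}$ and finiteness follows from the first assertion. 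You instead bypass the hyperballean axioms entirely: you observe that membership in $\QQ_{\exp\BB_G}(\{e\})$ forces $X$ itself to be finite, then compute $B_G(X,F)=X\cup FX\cup F^{-1}X$ directly and repeat the power-set argument. Both routes are sound; the paper's is shorter and reuses the first statement as a black box, while yours is more self-contained (it never needs upper multiplicativity, whose verification for $\exp\BB_G$ is exactly the kind of computation you perform explicitly) and makes visible the concrete reason the second statement holds, namely that every member of the connected component of $\{e\}$ is a finite set.
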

\begin{proof}
It is enough to note that, for every finite subset $F$ of $G$, if $X\in\exp B_G(\{e\},F)$, then $X\subseteq \{e\} \cup F$. The second statement follows, since $\exp\BB_G$ is upper multiplicative.
\end{proof}

\subsection{The logarithmic hyperballean $\ell\mbox{-}\exp G$}\label{sub:log}

Let us now introduce another ballean structure on $\exp(G)$, where $G$ is a group.

\begin{definition}\label{3.1}
For a group $G$, we define  a function $d\colon\exp(G)\times\exp(G)\to\N\cup\{\infty\}$  as  follows. If $Y, Z\subseteq G$ are two non-empty subsets which are in distinct connected components of $ \exp\mathcal{B}_{G}$ then $d(Y, Z)=\infty$.  Otherwise, we define
$$
\mu (Y, Z)= \min\{\max\{\lvert F\vert, \lvert S\rvert\}: FY \supseteq Z,\, SZ\supseteq Y,\, F\in[G]^{<\omega},\, S \in[G]^{<\omega}, \,  e\in F\cap S\}
$$
and put
	$$d(Y,Z)=\log\mu (Y,Z),$$
 where the base of the logarithm is any value strictly greater than $1$.
\end{definition}

The next claim is not hard to check, yet we give an argument for the sake of completeness.
\begin{claim} The function $d$ is an extended metric. Moreover, changing the base of the logarithm leads to equivalent extended metrics.
 \end{claim}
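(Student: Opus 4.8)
The plan is to verify in turn the three defining properties of an extended metric---vanishing exactly on the diagonal, symmetry, and the triangle inequality---and then to treat the base change separately. The guiding principle is that all the content sits in the quantity $\mu$, and that $d=\log\mu$ turns the expected \emph{submultiplicativity} of $\mu$ into the \emph{subadditivity} (triangle inequality) of $d$. Before anything else I would record that $\mu(Y,Z)$ is well defined and finite exactly when $Y$ and $Z$ lie in the same connected component of $\exp\BB_G$. Indeed, the existence of finite $F,S$ with $e\in F\cap S$, $Z\subseteq FY$ and $Y\subseteq SZ$ is equivalent to $Z\in\exp B_G(Y,A)$ for some $A\in[G]^{<\omega}$ (pass between the two by taking $A=F\cup S$ in one direction and $F=S=A\cup A^{-1}\cup\{e\}$ in the other), and the latter says precisely that $Z$ belongs to the connected component of $Y$. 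Hence the feasible set in the definition of $\mu$ is non-empty exactly on a single component, where the minimum of the integers $\max\{|F|,|S|\}$ is attained, so $\mu(Y,Z)\in\N$ there and the case split in the definition is consistent.

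Symmetry and the diagonal condition are immediate. The feasible set for $\mu(Y,Z)$ is carried onto that for $\mu(Z,Y)$ by the involution $(F,S)\mapsto(S,F)$, so $\mu(Y,Z)=\mu(Z,Y)$ and $d$ is symmetric (the convention $d=\infty$ is symmetric as well). Since $e\in F\cap S$ forces $|F|,|S|\ge 1$, we always have $\mu(Y,Z)\ge 1$, hence $d(Y,Z)=\log\mu(Y,Z)\ge 0$; and $d(Y,Z)=0$ forces $\mu(Y,Z)=1$, i.e. $|F|=|S|=1$ for some feasible pair, whence $F=S=\{e\}$ and therefore $Z\subseteq Y$ and $Y\subseteq Z$, that is $Y=Z$. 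Conversely $Y=Z$ is witnessed by $F=S=\{e\}$, giving $d(Y,Y)=0$.

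The triangle inequality is the one genuinely non-trivial point, and I would split it into the infinite and the finite cases. If $d(Y,W)=\infty$, then $Y$ and $W$ lie in distinct components; were both $d(Y,Z)$ and $d(Z,W)$ finite, transitivity of the component relation would place $Y$ and $W$ in a common component, a contradiction, so the right-hand side is $\infty$ and the inequality holds. In the finite case (all three in one component) it suffices to prove the multiplicative inequality
\begin{equation*}
\mu(Y,W)\le\mu(Y,Z)\,\mu(Z,W),
\end{equation*}
since applying $\log$ then gives $d(Y,W)\le d(Y,Z)+d(Z,W)$. To prove it I would take optimal witnesses $F_1,S_1$ for $\mu(Y,Z)$ (so $F_1Y\supseteq Z$, $S_1Z\supseteq Y$) and $F_2,S_2$ for $\mu(Z,W)$ (so $F_2Z\supseteq W$, $S_2W\supseteq Z$) and \emph{compose} them: put $F=F_2F_1$ and $S=S_1S_2$. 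Then $W\subseteq F_2Z\subseteq F_2F_1Y=FY$ and $Y\subseteq S_1Z\subseteq S_1S_2W=SW$, while $e\in F\cap S$ because $e\in F_i\cap S_i$; thus $(F,S)$ is feasible for $(Y,W)$. Using $|F_2F_1|\le|F_2|\,|F_1|$ and $|S_1S_2|\le|S_1|\,|S_2|$ together with the elementary estimate $\max\{pa,bq\}\le\max\{a,b\}\max\{p,q\}$ (applied with $a=|F_1|$, $b=|S_1|$, $p=|F_2|$, $q=|S_2|$) yields $\max\{|F|,|S|\}\le\mu(Y,Z)\,\mu(Z,W)$, and the claimed bound follows by minimality of $\mu(Y,W)$.

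Finally, for the base change: if $d$ and $d'$ are built from logarithms to bases $b,b'>1$, then $d'=\log_{b'}(b)\cdot d$ with constant $\log_{b'}(b)>0$ (and the relation persists under the convention $c\cdot\infty=\infty$). A positive scalar multiple of an extended metric is bilipschitz equivalent to it via the identity map, hence induces the same metric ballean structure, so the two choices give equivalent extended metrics. I expect the composition argument establishing submultiplicativity of $\mu$ to be the crux; everything else is formal.
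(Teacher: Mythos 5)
Your proof is correct and follows essentially the same route as the paper's: both reduce the triangle inequality to composing the finite witness sets (e.g.\ $F=F_2F_1$, $S=S_1S_2$) and bounding $\lvert F_2F_1\rvert\leq\lvert F_2\rvert\,\lvert F_1\rvert$, so that submultiplicativity of $\mu$ becomes subadditivity of $d=\log\mu$, with the base change handled by a constant rescaling. Your write-up is merely more explicit about the routine points (finiteness of $\mu$ on connected components, symmetry, the diagonal condition, and the infinite case) that the paper dismisses as trivial.
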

\begin{proof}	 In fact, the only non-trivial property is the triangular inequality. Fix then three non-empty subsets $X,Y,Z$ of $G$ such that $d(X,Y)\leq\log n$ and $d(Y,Z)\leq\log m$. Pick four finite subsets $F_1,F_2,K_1,K_2\subseteq G$ such that
\begin{enumerate}[(a)]
\item $e\in F_1\cap F_2\cap K_1\cap K_2$,
\item $\lvert F_1\rvert,\lvert F_2\rvert\leq n$,
\item $\lvert K_1\rvert,\lvert K_2\rvert\leq m$,
\item $X\subseteq F_1Y$ and $Y\subseteq F_2X$,
\item $Y\subseteq K_1Z$ and $Z\subseteq K_2Y$.
\end{enumerate}
In particular $X\subseteq F_1Y\subseteq F_1K_1Z$ and, similarly, $Z\subseteq K_2F_2 Y$. Since both $\lvert F_1K_1\rvert\leq mn$ and $\lvert F_2K_2\rvert\leq mn$, this proves the first part of the claim. The last part is trivial.
\end{proof}	

Finally we define the {\it logarithmic hyperballean} as the metric ballean induced by $d$, namely
$$
\ell\mbox{-}\exp\mathcal{B}_{G} = (\mathcal P(G)\setminus\{\emptyset\},\mathbb{R}_{\ge 0}, B_{d} ),\text{ where $B_{d}(Y,R)=\{Z: d(Y,Z)\leq R\}$, for every $\emptyset\neq Y\subseteq G$ and $R\ge 0$}.
$$

The extended metric $d$  is invariant under left and right actions of $G$  on $\mathcal P(G)\setminus\{\emptyset\}$, i.e., the maps  $F\mapsto gF$ and $F\mapsto Fg$, for every $g\in G$ and every $F\subseteq G$.

Furthermore, if $G$ and $H$ are two isomorphic groups, then $\LL(G)\aaa\LL(H)$.

\begin{remark}\label{rem:logarithmic}
\begin{enumerate}[(a)]
\item Clearly, the connected components of $\exp\mathcal{B} _{G}$ and $\ell\mbox{-}\exp\BB_G$ coincide.
\item For every group $G$, $\exp\BB_G$ is finer than $\ell\mbox{-}\exp\BB_G$. In fact, if two subsets $X$ and $Y$ of $G$ satisfy $X\in\exp B_G(Y,F)$ for some finite subset $F\in\mathcal F_G$, then $d(X,Y)\leq\log\lvert F\cup \{e\}\rvert$.
\item If $G$  is infinite, then $\exp\mathcal{B}_{G}$ is strictly finer than $\ell\mbox{-}\exp\mathcal{B}_{G}$. First of all, note that, for every two distinct singletons $\{x\}$ and $\{y\}$ of $G$, $d(\{x\},\{y\})=1$ and thus $B_d(\{x\},1)\supseteq\{\{z\}: z\in G\}$. However, a subset $K$ of $G$ such that $\{x\}\subseteq\{y\}K$, for every $x,y\in G$, must satisfy $K=G$, which is not a radius of $\exp\BB_G$, since $G$ is infinite.
\item 
If $G$ is abelian, then the $\ell\mbox{-}\exp\mathcal{B}_{G} $ can also be defined by the extended-metric $d^{\prime}$, where, for every two non-empty subsets $Y,Z\subseteq G$,
$$ d^{\prime}(Y,Z)= \log \min \{\lvert S\rvert: S+Y \supseteq Z,\, S+Z\supseteq Y,\, S\in [G]^{<\omega},\, 0\in S  \},$$
if $Y, Z$  are in the same connected component of $\exp\mathcal{B}_{G}$, and $d^{\prime}(Y,Z)=\infty$, otherwise.
\end{enumerate}
\end{remark}

\begin{remark}
Let $G$ be a group of cardinality $\kappa$. Recall that $G^\flat$ is the family of all non-empty bounded subset of $(G,\BB_G)$, i.e., all finite subsets of $G$. We consider two ballean structure on $G^\flat$: the first one is the subballean structure $\ell\mbox{-}\BB_G^\flat=\ell\mbox{-}\exp\mathcal{B}_{G}|_{G^\flat}$, while the second one is given by the identification of $\flat(G)$ with $\mathbb H_\kappa^\ast$, i.e., the metric ballean induced by $h$, where, for every $X,Y\in G^\flat$, $h(X,Y)=\lvert X\triangle Y\rvert$.

We claim that $\mathbb H_\kappa^\ast$ is finer than $\ell\mbox{-}\BB_G^\flat$ and, moreover, if $G$ is infinite, it is strictly finer. Let $R\in\N$ and $X,Y\in G^\flat$ such that $h(X,Y)\leq R$. Fix two elements $\overline x\in X$ and $\overline y\in Y$ and define
$$
F=\{x\overline y^{-1} : x\in X\setminus Y\}\cup\{e\}\quad\mbox{and}\quad K=\{y\overline x^{-1} : y\in Y\setminus X\}\cup\{e\}.
$$
Then $\lvert F\rvert,\lvert K\rvert\leq R+1$ and $$
X\subseteq(X\setminus Y)\cup Y=F\overline y\cup Y\subseteq FY\quad\mbox{and}\quad Y\subseteq(Y\setminus X)\cup X=K\overline x\cup X\subseteq KX,
$$
which implies the first part of the statement. Suppose now that $G$ is infinite. Then, for every $n\in\N$, there exists $F\in G^\flat$ and $g\in G$ such that $F\cap gF=\emptyset$ and $|F|=n$. Then $d(F,gF)=1$, while $h(F,gF)=2n$. Since $n$ can be chosen arbitrarily, $\mathbb{H}_\kappa^\ast$ is strictly finer than $\ell\mbox{-}\BB_G^\flat$.
\end{remark}

\begin{question}\label{q:hamm}	
\begin{enumerate}[(a)]
\item For a countable group $G$, are $\ell\mbox{-}\BB_{G}^\flat$   and $ \mathbb{H}_{\omega}^{\ast} $ asymorphic? Coarsely equivalent?
\item If the answer to item (a) is affirmative then, if $G$ and $H$ are countable groups, are $\ell\mbox{-}\BB_{G}^\flat$ and $\ell\mbox{-}\BB_{H}^\flat$ asymorphic? In particular, what does it happen if $G= \mathbb{Z}$  and $H$ is the countable group of exponent 2?
\end{enumerate}
\end{question}


\section{The subgroup hyperballeans $\mathcal L(G)$ and $\LL(G)$}\label{sec:sub_hyper}
\label{sec:connected_comp}

For every group $G$, we denote by $L(G)$ the lattice of all subgroups of $G$.
In this paper we focus on the following subballeans of $\exp\BB(G)$ and $\ell\mbox{-}\exp\BB(G)$:

\begin{enumerate}[$\bullet$]
\item the {\em subgroup exponential hyperballeans} $\mathcal L(G)=\exp\BB_G|_{L(G)}$; and
\item  the {\em subgroup logarithmic hyperballeans} $\LL(G)=\ell\mbox{-}\exp\BB(G)|_{L(G)}$.
\end{enumerate}

First of all, we want to give a different and useful characterisation of the subgroup logarithmic hyperballean $\LL(G)=\ell\mbox{-}\exp\BB_G|_{L(G)}$, where $G$ is a group, and, in order to do that, we need the following result.

\begin{lemma}\label{2.3}  Let $G$ be a group and let $A, B$ be subgroups of $G$ such that $B\subseteq SA$ for some subset $S$  of $G$. Then $\lvert B: (A \cap B)\rvert\leq \lvert S\rvert$.
\end{lemma}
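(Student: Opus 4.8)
The plan is to identify the left cosets of $A\cap B$ in $B$ with certain left cosets of $A$ in $G$, and then to read off the bound from the hypothesis $B\subseteq SA$. First I would record the standard coset correspondence: for $b,b'\in B$ one has $b(A\cap B)=b'(A\cap B)$ (as left cosets of $A\cap B$ in $B$) if and only if $b^{-1}b'\in A\cap B$. Since $b,b'\in B$ forces $b^{-1}b'\in B$ automatically, this condition is equivalent to $b^{-1}b'\in A$, that is, to the equality $bA=b'A$ of left cosets of $A$ in $G$. Consequently the assignment $b(A\cap B)\mapsto bA$ is a well-defined injection from the set of left cosets of $A\cap B$ in $B$ into the set of left cosets of $A$ in $G$. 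In particular $\lvert B:(A\cap B)\rvert$ equals the number of distinct cosets of the form $bA$ with $b\in B$.

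Next I would invoke the hypothesis $B\subseteq SA$. Given any $b\in B$, write $b=sa$ with $s\in S$ and $a\in A$; then $bA=saA=sA$. Hence every coset of the form $bA$ (for $b\in B$) already occurs among the cosets $sA$ (for $s\in S$), so the number of distinct such cosets is at most the number of distinct cosets $sA$, which is in turn at most $\lvert S\rvert$. Combining this with the first paragraph yields
$$
\lvert B:(A\cap B)\rvert\leq\lvert S\rvert,
$$
as required.

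I do not anticipate a genuine obstacle here; the only point deserving a moment's care is the well-definedness and injectivity of the coset correspondence $b(A\cap B)\mapsto bA$, which is exactly the content of the second isomorphism theorem recast at the level of coset spaces. Since $A\cap B$ need not be normal in $B$ and $A$ need not be normal in $G$, one should phrase everything in terms of coset spaces and their cardinalities rather than quotient groups, but the bijection argument above is insensitive to normality and so goes through verbatim.
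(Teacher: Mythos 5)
Your proof is correct, and it takes a genuinely more streamlined route than the paper's. The paper proceeds by a three-stage reduction: first it treats the special case $S\subseteq B$ (where, for each $b\in B$, one picks $s\in S$ with $b\in sA$ and notes $s^{-1}b\in A\cap B$, so that $B\subseteq S(A\cap B)$ and the cosets of $A\cap B$ in $B$ are among the $s(A\cap B)$); then the case $S\subseteq BA$, handled by replacing each $s\in S\cap Ba$ with $sa^{-1}\in B$ to produce a set $S^{*}\subseteq B$ with $S^{*}A=SA$ and $\lvert S^{*}\rvert\leq\lvert S\rvert$; and finally the general case, by discarding the irrelevant part of $S$ and passing to $S_{1}=S\cap BA$. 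Your argument short-circuits all of this: by sending cosets of $A\cap B$ in $B$ injectively to cosets of $A$ in $G$ via $b(A\cap B)\mapsto bA$ (the coset-space form of the second isomorphism theorem, which, as you note, needs no normality), the hypothesis $B\subseteq SA$ immediately places the image inside $\{sA:s\in S\}$, a set of cardinality at most $\lvert S\rvert$. The paper's normalization of $S$ (translating it into $B$) becomes unnecessary precisely because you count cosets of $A$ in $G$ rather than cosets of $A\cap B$ in $B$: elements of $S$ outside $BA$ merely contribute cosets $sA$ that are never hit, and an element $s\in S\cap Ba$ determines the same coset $sA$ as its translate $sa^{-1}$, so nothing is gained or lost. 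Both arguments work equally well for infinite $S$; yours is shorter and isolates the structural reason the bound holds.
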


\begin{proof}	 We split the proof in three cases.
	\medskip

	{\bf Case 1.}  {\sl Assume that $S\subseteq B$. } Given any $b\in B$, we pick $s\in S$  such that $b\in sA$. Then $s ^{-1} b \in A \cap B$ and $B\subseteq S(A\cap B)$.
	This proves that  $|B:(A \cap B)|\leq  |S|$.
	
	\medskip
	
	{\bf Case 2.} {\sl Assume that  $S \subseteq BA$}. Let $S_a:= S\cap Ba$ and note that our assumption provides  a partition $S= \bigcup_{a\in A} S_a$. Let $S^*:= \bigcup_{a\in A, S_a\ne \emptyset } S_aa^{-1}$ and note that: \\
	(i) $SA = S^*A$; (ii) $|S^*|\leq |S|$; (iii) $S^*\subseteq B$ (as $S_aa^{-1}\subseteq B$ when $ S_a\ne \emptyset$).
	
	By (i) and our blanket assumption $B\subseteq SA$, $B\subseteq S^*A$, so by (iii) we can apply Case 1 to $A,B$ and $S^*$ to claim
	$|B: A \cap B|\leq  |S^*|$. Now (ii) allows us to conclude that $|B: (A \cap B)|\leq  |S|$.
	
	\medskip
	
	{\bf Case 3.} In the general case let	$S_1:=S \cap BA$. Then obviously, $B\subseteq S_1A$ and $S_1 \subseteq BA$. By case 2,
	applied to $A,B$ and $S_1$ we have $|B: A \cap B|\leq  |S_1|$. Since obviously $|S_1| \leq |S|$,	this 	 yields $|B: A \cap B|\leq  |S|$. \end{proof}

\begin{definition} We recall that two subgroups of a group $G$ are {\it commensurable} if the indices  $\lvert A: A \cap B\rvert$ and $\lvert B: A\cap B\rvert$ are finite.
\end{definition}

By Lemma \ref{2.3} and Remark \ref{rem:logarithmic}(a), two subgroups $A$ and $B$ of $G$ are in the same connected component of $\mathcal{L}(G)$ ($\LL(G)$, equivalently) if and only if $A$ and $B$ are commensurable.

Moreover, Lemma \ref{2.3} also implies a different characterization of $\LL(G)$, which is much more manageable. Namely, for every group $G$ and every pair of subgroups $A$ and $B$ of $G$, define
$$
d^\prime(A,B)=\begin{cases}
\begin{aligned}&\log\max\{\lvert A: A\cap B\rvert, \lvert B: A\cap B\rvert\} &\text{if $A$ and $B$ are commensurable,}\\
&\infty &\text{otherwise,}
\end{aligned}
\end{cases}
$$
which is an extended metric on $L(G)$. By Lemma \ref{2.3}, the ballean structure on $L(G)$ induced by $d^\prime$ coincides with $\LL(G)$.
%

Thanks to the previous characterisation of the subgroup logarithmic hyperballean, we can provide an example of a group $G$ such that $\LL(G)$ has some infinite ball (see Example \ref{ex:Tarskii}). 

\begin{remark}\label{rem:Q_Zn}
Fix $n\ge 2$. We want to take a closer look at the structure of $\mathcal L(\Z^n)$. First of all note that two commensurable subgroups $H$ and $K$ of $\Z^n$ have same free rank. Moreover, every subgroup $H$ of $\Z^n$ is commensurable with a pure subgroup $\sat(H)$ of $\Z^n$,  namely its {\em saturation} defined by
$$
\sat(H) = \{x\in \Z^n: mx \in H\  \mbox{ for some non-zero }\ m\in \Z \}
$$
(denoted also by $H_*$ by some authors; recall that a subgroup $H$ of an abelian group $G$ is {\em pure}, whenever
$mH = mG \cap H$ for every $m>0$ [pure subgroups of $\Z^n$ split as direct summands]). For every $H,K\leq\Z^n$, $\sat(H)$ is commensurable with $\sat(K)$ if and only if $\sat(H)=\sat(K)$. Then $\mathcal L(\Z^n)$ has a countable number of connected components. Namely, they are:
\begin{itemize}
	\item $\QQ_{\mathcal L(\Z^n)}(\{0\})=\{0\}$,
	\item $\QQ_{\mathcal L(\Z^n)}(\Z^n)$,
	\item for every $0<k<n$, a countable number of connected components asymorphic to the subballean $\QQ_{\mathcal L(\Z^n)}(\Z^k)$ of $\mathcal L(\Z^n)$ which is asymorphic to the subballean $\QQ_{\mathcal L(\Z^k)}(\Z^k)$ of $\mathcal L(\Z^k)$.
\end{itemize}

In particular, by Remark \ref{RemIso}, for every $n>1$, neither $\mathcal L(\Z)\aaa\mathcal L(\Z^n)$, nor $\LL(\Z)\aaa\LL(\Z^n)$.

Note that $L(\Z)$ has two connected components, while $\dsc(\exp\BB_\Z)=\dsc(\LL\BB_{\Z})=2^\omega$, as we will prove in Proposition \ref{prop:dsc_expG}.
\end{remark}

\subsection{$\iso(G)$: the chase for isolated points of $\mathcal L(G)$ and $\LL(G)$}\label{sub:isoLG}

According to Remark \ref{rem:logarithmic}(a), the isolated points of $\mathcal L(G)$ and $\LL(G)$ coincide. We denote this  set of common isolated points by $\iso(G)$.

 In this subsection we show how the existence of isolated points is related to a well-known algebraic property, such as divisibility.

 Let us recall, that  a divisible subgroup $H$ of an abelian  group $G$ always splits, i.e., there exists another subgroup $K$ of $G$ such that $G\simeq H\oplus K$.
 The class of divisible groups is stable under taking quotients, products and direct sums. Examples of divisible groups are $\Q$ and, for every prime $p$, the {\em Pr\" uffer $p$-group} $\mathbb{Z}_{p^{\infty}}$ (while non-trivial finite groups are never divisible).
As the following folklore fact shows, every divisible group is a direct sum of these. In other words,
divisible abelian groups are completely characterised by their ranks:

\begin{fact}\label{theo:divis_structure}{\rm \cite{F}}  If $G$ is a divisible abelian group, then
$$
G\simeq\Q^{(r_0(G))}\oplus\bigg(\bigoplus_{p\in\pi(G)}(\Z_{p^{\infty}})^{(r_p(G))}\bigg).
$$
\end{fact}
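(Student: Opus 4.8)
The plan is to derive this classical structure theorem through four reductions, using only the stated fact that a divisible subgroup of an abelian group is a direct summand. First I would split off the torsion. A quick check shows $t(G)$ is divisible: if $x\in t(G)$ with $kx=0$ and $n\geq 1$ is given, choose $y\in G$ with $ny=x$ (possible since $G$ is divisible); then $kny=kx=0$, so $y\in t(G)$. By the splitting property recalled just above the statement, $G\simeq t(G)\oplus D$ with $D\simeq G/t(G)$, and $D$ is both torsion-free and divisible (a quotient of a divisible group is divisible). On a torsion-free divisible group divisibility is \emph{unique} — if $ny=ny'=x$ then $n(y-y')=0$ forces $y=y'$ — so $D$ carries a well-defined $\Q$-vector space structure via $\tfrac{m}{n}x:=my$ where $ny=x$. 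Hence $D\simeq\Q^{(\kappa)}$ with $\kappa=\dim_{\Q}D=r_0(G)$, which accounts for the free part.

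Next I would analyse $T:=t(G)$, which is divisible and torsion, hence decomposes into its primary components $T=\bigoplus_p T_p$; each $T_p$, being a direct summand of the divisible group $T$, is again divisible. So the crux reduces to showing that a divisible $p$-group $T_p$ is isomorphic to a direct sum of copies of $\Z_{p^{\infty}}$, the number of copies being $\dim_{\Z/p\Z}T_p[p]$. To this end I would start from the socle $S:=T_p[p]$, a vector space over $\Z/p\Z$, fix a basis $\{e_i\}_{i\in I}$, and for each $i$ use divisibility repeatedly to choose $a_{i,1}=e_i$ and $a_{i,n+1}$ with $p\,a_{i,n+1}=a_{i,n}$; the subgroup $C_i:=\langle a_{i,n}:n\in\N\rangle$ is then a copy of $\Z_{p^{\infty}}$ whose socle is $C_i[p]=\langle e_i\rangle$.

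The main obstacle is to show that these Pr\"ufer subgroups are independent and exhaust $T_p$. For independence I would use the general fact that, in a $p$-group, independence of the socles $\{C_i[p]\}$ forces $\sum_i C_i$ to be direct: from any non-trivial relation $\sum_i c_i=0$ with $c_i\in C_i$, letting $p^m$ be the largest order among the non-zero terms and multiplying through by $p^{m-1}$, one obtains $\sum_i p^{m-1}c_i=0$ with each $p^{m-1}c_i\in C_i[p]$ (since $p^mc_i=0$ for all $i$) and with at least one term non-zero, contradicting independence of the socles. As $\{e_i\}$ is a basis of $S$, the socles $\langle e_i\rangle$ are independent, so $E:=\bigoplus_i C_i$ is a genuine direct sum with $E[p]=S=T_p[p]$. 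Being a direct sum of divisible groups, $E$ is divisible, hence splits off: $T_p=E\oplus F$ with $F$ divisible. If $F\neq 0$ then, being a non-trivial $p$-group, $F$ has non-zero socle $F[p]\subseteq T_p[p]=E[p]\subseteq E$; but $F\cap E=0$ forces $F[p]=0$, a contradiction. Thus $F=0$ and $T_p\simeq\bigoplus_{i\in I}\Z_{p^{\infty}}$ with $|I|=\dim_{\Z/p\Z}T_p[p]$. Finally, since every element of order dividing $p$ is torsion and $p$-primary, $G[p]=T_p[p]$, so $|I|=r_p(G)$; collecting the primes with $r_p(G)>0$ as $\pi(G)$ and assembling the torsion-free and primary pieces yields the displayed isomorphism.
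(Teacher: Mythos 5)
Your proposal is correct, and it is worth noting that the paper itself offers no proof at all: the statement is recorded as a folklore Fact with a citation to Fuchs \cite{F}, so there is no ``paper's argument'' to compare against. What you have written is essentially the standard textbook proof of the structure theorem for divisible abelian groups, and every step checks out against the facts the paper makes available (divisible subgroups split; divisibility passes to quotients, summands and direct sums). Concretely: the verification that $t(G)$ is divisible, the uniqueness of division in a torsion-free divisible group (making $G/t(G)$ a $\Q$-vector space of dimension $r_0(G)$), the primary decomposition of the torsion part, the construction of Pr\"ufer subgroups $C_i$ over a basis of the socle via repeated division by $p$, the lemma that independence of the socles $C_i[p]$ forces $\sum_i C_i$ to be direct (the multiply-by-$p^{m-1}$ argument is right), and the final step that a complement $F$ of $E=\bigoplus_i C_i$ must vanish because a non-trivial $p$-group has non-trivial socle while $F[p]\subseteq F\cap E=0$ --- all of these are sound, and the rank bookkeeping ($G[p]=T_p[p]$, so the number of Pr\"ufer summands at $p$ is $r_p(G)$) is handled correctly. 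One cosmetic remark: the paper's definition of $G[m]$ contains a typo (it is written identically to $mG$); your reading of $G[p]$ as the $p$-socle $\{x\in G: px=0\}$ is the intended one, as the paper's definition of $r_p(G)=\dim_{\Z/p\Z}G[p]$ confirms.
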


A group $G$ is called {\em reduced} if $d(G)=\{0\}$, in other words, the biggest divisible subgroup of $G$ is $\{0\}$.

The equivalence of (a) and (b) in the following claim is folklore, yet we give a proof for the sake of completeness.

\begin{claim}\label{Cl1} For an abelian group $G$ the following are equivalent:
\begin{enumerate}[(a)]
  \item $G$ is divisible;
  \item $G$ has no proper subgroups of finite index;
  \item $\{G\}\in \iso(G)$.
\end{enumerate}
\end{claim}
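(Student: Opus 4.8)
The plan is to separate the genuinely geometric equivalence (b)$\Leftrightarrow$(c) from the purely algebraic, folklore equivalence (a)$\Leftrightarrow$(b). The first follows at once from the description of the connected components of $\mathcal L(G)$ obtained just above, while the second is established by elementary arguments on divisible groups.

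First I would prove (b)$\Leftrightarrow$(c). By Lemma~\ref{2.3} and Remark~\ref{rem:logarithmic}(a), two subgroups of $G$ lie in the same connected component of $\mathcal L(G)$ (equivalently, of $\LL(G)$) precisely when they are commensurable. Applying this to the subgroup $G$ itself, a subgroup $K\le G$ is commensurable with $G$ if and only if $\lvert G:K\rvert<\infty$, since $\lvert K:K\cap G\rvert=1$ holds automatically while $\lvert G:G\cap K\rvert=\lvert G:K\rvert$. Hence the connected component of $G$ is $\QQ_{\mathcal L(G)}(G)=\{K\le G:\lvert G:K\rvert<\infty\}$. By the definition of an isolated point, condition (c) asserts precisely that this component is trivial, i.e., $\QQ_{\mathcal L(G)}(G)=\{G\}$, which means that the only finite-index subgroup of $G$ is $G$ itself; this is condition (b), giving (b)$\Leftrightarrow$(c).

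For (a)$\Rightarrow$(b) I would argue by contradiction. If $G$ is divisible and $H\le G$ has finite index $n$, then $G/H$ is a divisible group (divisibility is inherited by quotients) of finite order $n$. But every abelian group of order $n$ is annihilated by $n$, whereas divisibility forces $n(G/H)=G/H$; hence $G/H=\{0\}$, so $n=1$ and $H=G$. Thus $G$ has no proper subgroup of finite index.

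For the converse (b)$\Rightarrow$(a) I would prove the contrapositive. Assume $G$ is not divisible. Since $mG=G$ for all $m\ge 1$ is equivalent to $pG=G$ for all primes $p$, there is a prime $p$ with $pG\ne G$. Then $G/pG$ is a non-trivial vector space over $\Z/p\Z$, so it admits a surjective homomorphism onto $\Z/p\Z$ (send one basis vector to $1$ and the remaining ones to $0$); pulling this back along the projection $G\to G/pG$ produces a subgroup of $G$ of index $p$, which is proper, contradicting (b). The whole argument is routine, and the only step requiring slight care is this last one, where one must manufacture an honest finite-index subgroup out of the mere failure of divisibility; the reduction to a single prime $p$ together with the structure of $\Z/p\Z$-vector spaces makes this transparent.
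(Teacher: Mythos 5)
Your proof is correct and follows essentially the same route as the paper: the same quotient argument for (a)$\Rightarrow$(b), the same reduction to a prime $p$ and the $\Z/p\Z$-vector space $G/pG$ for (b)$\Rightarrow$(a), and the equivalence (b)$\Leftrightarrow$(c), which the paper dismisses as obvious, is exactly the commensurability description of connected components that you spell out via Lemma~\ref{2.3}. The extra detail you supply (why a non-trivial finite group cannot be divisible, and the explicit identification $\QQ_{\mathcal L(G)}(G)=\{K\le G:\lvert G:K\rvert<\infty\}$) is accurate but does not change the argument.
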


\begin{proof}  (a) $\to $ (b) It suffices to note that if $H$ is a proper subgroup of $G$ of finite index, then the quotient $G/H$ is non-trivial finite group,
so cannot be divisible, while divisibility is preserved under taking quotients.

 (b) $\to $ (a) If $G$ is not divisible, then $pG \ne G$ for some prime $p$. Hence,  $G/pG$ is a non-trivial abelian group of exponent $p$, i.e., a vector space over $\Z/p\Z$. Then $G/pG$ admits a non-zero homomorphism $G/pG\to \Z/p\Z$, which is necessarily surjective, so provides a quotient of $G$ isomorphic to $\Z/p\Z$.

 Finally, (b) and (c) are obviously equivalent.
\end{proof}

\begin{proposition}\label{abba} For a subgroup $A$ of an abelian group $G$ the following are equivalent:
\begin{enumerate}[(a)]
  \item $A\in \iso(G)$;
  \item $A$ is divisible and $G \simeq A \oplus B$, where $B$ is a torsion-free subgroup of $G$.
  \item $t(G) \leq A \leq d(G)$ and $A$ is divisible;
\end{enumerate}
\end{proposition}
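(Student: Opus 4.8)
The plan is to prove the cycle (a)$\Rightarrow$(c)$\Rightarrow$(b)$\Rightarrow$(a). Throughout I will use the description of $\iso(G)$ obtained from Lemma~\ref{2.3} and Remark~\ref{rem:logarithmic}(a): a subgroup $A$ lies in $\iso(G)$ precisely when no subgroup $C\neq A$ of $G$ is commensurable with $A$, i.e.\ when there is no $C\neq A$ with both $|A:A\cap C|$ and $|C:A\cap C|$ finite. The other two ingredients are Claim~\ref{Cl1} (divisibility of an abelian group is equivalent to the absence of proper finite-index subgroups) and the recalled fact that a divisible subgroup of an abelian group splits off as a direct summand.

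For (a)$\Rightarrow$(c), assume $A\in\iso(G)$. First, $A$ must be divisible: otherwise Claim~\ref{Cl1}, applied to the abelian group $A$ itself, produces a proper finite-index subgroup $A_0<A$, and then $A_0$ is commensurable with $A$ (as $|A:A_0|<\infty$) with $A_0\neq A$, contradicting isolation. Divisibility of $A$ gives $A\leq d(G)$ at once, since $d(G)$ is the largest divisible subgroup. Finally $t(G)\leq A$: if some torsion element $x\in G\setminus A$ existed, then $C:=A+\langle x\rangle$ would satisfy $A<C$ (because $x\notin A$) and, since $\langle x\rangle$ is finite, $C/A$ would be finite, so $|C:A\cap C|=|C:A|<\infty$; thus $C$ would be commensurable with $A$ and distinct from it, again contradicting isolation. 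This yields (c), and notably needs no splitting.

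For (c)$\Rightarrow$(b), divisibility of $A$ lets us split $G\simeq A\oplus B$; then $t(B)\leq t(G)\leq A$ while $t(B)\leq B$, so $t(B)\subseteq A\cap B=\{0\}$ and $B$ is torsion-free, which is exactly (b). For (b)$\Rightarrow$(a), take any $C$ commensurable with $A$. Then $A\cap C$ has finite index in the divisible group $A$, so by Claim~\ref{Cl1} it must equal $A$, giving $A\leq C$; commensurability then forces $|C:A|<\infty$, so $C/A$ is a finite subgroup of $G/A\simeq B$. As $B$ is torsion-free this forces $C/A=\{0\}$, i.e.\ $C=A$, so $A$ is isolated.

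The two load-bearing observations—and the conceptual heart of the argument—are the one-sided rigidity properties of a divisible subgroup $A$: downward, any finite-index subgroup of $A$ equals $A$ (so every $C$ commensurable with $A$ must contain $A$), and upward, the finite-index overgroups of $A$ are governed entirely by the torsion of $G/A$ (so isolation can be broken from above only by torsion elements outside $A$). Once these are isolated, conditions (a)--(c) are merely three encodings of the single statement ``$A$ is divisible and no torsion of $G$ escapes $A$'', and the structure theorem for divisible groups (Fact~\ref{theo:divis_structure}) is not required at all.
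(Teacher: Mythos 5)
Your proof is correct and takes essentially the same route as the paper's: both arguments rest on Claim~\ref{Cl1}, the fact that divisible subgroups split off, and the observation that adjoining a torsion element outside $A$ (or passing to a proper finite-index subgroup of $A$) yields a subgroup commensurable with $A$ but distinct from it. The only differences are organizational — you cycle through (c) and use $G/A\simeq B$ where the paper proves (a)$\leftrightarrow$(b) directly and splits $C = A\oplus C_1$ — but the mathematical content is identical.
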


\begin{proof} (b)$\to$(a) If $G = A \oplus B$, with $A$ divisible and $B$ torsion-free,  then $A$ has no proper subgroup of finite index (Claim \ref{Cl1}). So if $C$ is a subgroup of $B$ commensurable with $A$, then  $C\cap A = A$, i.e., $A \leq C$. Since $A$ is divisible, this gives  $C = A \oplus C_1$, for some subgroup $C_1$ of $B$. As  $C $ and $ A$ are commensurable, $C_1\simeq C/A$ is finite. Since $B$ is torsion-free, this yields $C_1 = \{0\}$, i.e., $C = A$. Thus, $A\in \iso(G)$.
	
(a)$\to$(b) Now assume that $A\in \iso(G)$. Then $A$ has no proper subgroups of finite index, so $A$ is divisible, by Claim \ref{Cl1}.  Then there exists a subgroup $B$ of $G$ such that $G = A \oplus B$. If $b\in B$ were a non-zero torsion element of $B$, then $A \oplus \langle b\rangle $ is commensurable with $A$, so our hypothesis implies that $\langle b\rangle = \{0\}$. This proves that $B$ is torsion-free.
	
 Finally, the equivalence (b)$\leftrightarrow$(c) is trivial.
\end{proof}

\begin{corollary}\label{coro:iso_divis} The following are equivalent for an abelian group $G$:
\begin{enumerate}[(a)]
  \item $\iso(G) \ne  \emptyset$;
  \item $G/d(G)$ is torsion-free;
  \item $t(G) \leq d(G)$;
  \item $d(G)\in  \iso(G)$.
\end{enumerate}
\end{corollary}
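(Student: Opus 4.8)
The plan is to route everything through Proposition \ref{abba}, which already pins down the isolated subgroups via the sandwich $t(G)\leq A\leq d(G)$ together with divisibility of $A$. I would establish the cycle (d)$\Rightarrow$(a)$\Rightarrow$(c)$\Rightarrow$(d) and, separately, the equivalence (b)$\Leftrightarrow$(c). The implication (d)$\Rightarrow$(a) is immediate, since $d(G)\in\iso(G)$ forces $\iso(G)\neq\emptyset$.

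For (a)$\Rightarrow$(c) I would pick any $A\in\iso(G)$ and invoke the equivalence (a)$\leftrightarrow$(c) of Proposition \ref{abba}: such an $A$ satisfies $t(G)\leq A\leq d(G)$, whence in particular $t(G)\leq d(G)$. For the return implication (c)$\Rightarrow$(d), assuming $t(G)\leq d(G)$, I would apply criterion (c) of Proposition \ref{abba} with the specific choice $A=d(G)$: the double inclusion $t(G)\leq d(G)\leq d(G)$ holds trivially, and $d(G)$ is divisible by its very definition as the largest divisible subgroup of $G$. Hence $d(G)\in\iso(G)$, which is exactly (d). This closes the cycle, so (a), (c), (d) are equivalent.

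The only genuinely computational point is (b)$\Leftrightarrow$(c), and it is where divisibility of $d(G)$ enters in an essential way. For (c)$\Rightarrow$(b), I would take $x\in G$ with $mx\in d(G)$ for some $m\neq 0$; using divisibility of $d(G)$, choose $y\in d(G)$ with $my=mx$, so that $m(x-y)=0$ and hence $x-y\in t(G)\leq d(G)$ by (c), giving $x\in d(G)$ and thus $G/d(G)$ torsion-free. For (b)$\Rightarrow$(c), any $x\in t(G)$ maps to a torsion element of the torsion-free group $G/d(G)$, forcing $x\in d(G)$, i.e.\ $t(G)\leq d(G)$. I expect no serious obstacle beyond remembering to exploit the divisibility of $d(G)$ to lift the relation $mx\in d(G)$ to an honest element $y\in d(G)$ with $my=mx$; this lifting is precisely what a naive argument would miss and is the crux of (c)$\Rightarrow$(b).
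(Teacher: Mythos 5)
Your proof is correct, and it differs from the paper's in where it places the only nontrivial work. The paper proves the cycle (a) $\to$ (b) $\to$ (c) $\to$ (d) $\to$ (a), and its single substantive step is (a) $\to$ (b): picking $A\in\iso(G)$, it invokes the splitting $G\simeq A\oplus B$ with $B$ torsion-free from Proposition \ref{abba}(b), then rearranges the decomposition to write $G=d(G)\oplus R(G)$ with $R(G)$ a subgroup of $B$, so that $G/d(G)\simeq R(G)$ is torsion-free; the remaining implications are dismissed as obvious (with (c) $\to$ (d) going through Proposition \ref{abba}, exactly as you do). You instead run the cycle (d) $\to$ (a) $\to$ (c) $\to$ (d) entirely through the sandwich criterion of Proposition \ref{abba}(c), and attach (b) by the separate equivalence (b) $\Leftrightarrow$ (c), whose nontrivial direction (c) $\to$ (b) you settle by the element-wise lifting argument: from $mx\in d(G)$ and divisibility of $d(G)$ produce $y\in d(G)$ with $my=mx$, so $x-y\in t(G)\leq d(G)$ and $x\in d(G)$. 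The trade-off: the paper's route is shorter on the page but its phrase ``one can arrange to have $R(G)$ a subgroup of $B$'' conceals a small direct-sum manipulation (decomposing $d(G)=A\oplus d(B)$ and splitting $d(B)$ off $B$), whereas your route replaces all such structural rearrangement with a purely computational fact --- for a divisible subgroup $D\leq G$, the quotient $G/D$ is torsion-free if and only if $t(G)\leq D$ --- which is more elementary and self-contained, at the cost of one extra (easy) implication to check.
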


\begin{proof} (a) $\to$ (b) Assume that $\iso(G) \ne \emptyset$ and pick $A\in \iso(G)$. Then $G  \simeq A \oplus B$, where $B$ is a torsion-free subgroup of $G$.  As $A\leq d(G)$, one has also $ G = d(G) \oplus R(G)$, and one can  arrange to have $R(G)$ a subgroup of $B$, so $R(G)\simeq G/d(G)$ is torsion-free.
	
	 (b) $\to$ (c) $\to$ (d) $\to$(a) are obvious (the second one in view of Proposition \ref{abba}).    \end{proof}	 	

	In particular, one can easily isolate the following sufficient conditions for the (non-)existence of isolated points:
	
\begin{corollary}\label{coro:iso_divis1} For an abelian group $G$ one has:
\begin{enumerate}[(a)]
  \item $G \in \iso(G)$ $($so $ \iso(G)\ne \emptyset)$ whenever $G$ is divisible;
  \item if $G$ is reduced, then $\iso(G)=  \emptyset$ if and only if $G$ is not torsion-free; otherwise, $\iso(G)=\{\{0\}\}$ is a singleton.
\end{enumerate}
	 \end{corollary}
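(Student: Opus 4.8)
The plan is to deduce both items directly from Claim~\ref{Cl1}, Proposition~\ref{abba}, and Corollary~\ref{coro:iso_divis}, since those results already carry all the structural content; the work here is merely to specialise them to the two situations at hand.

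For item (a), I would simply invoke the equivalence (a)$\leftrightarrow$(c) of Claim~\ref{Cl1}: if $G$ is divisible, then $\{G\}\in\iso(G)$ at once, so in particular $\iso(G)\ne\emptyset$. Alternatively, one may apply Proposition~\ref{abba} with $A=G$, since the trivial decomposition $G = G\oplus\{0\}$ exhibits $G$ as divisible with torsion-free complement $\{0\}$, so condition (b) there holds and $G\in\iso(G)$.

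For item (b), I would start from the hypothesis that $G$ is reduced, i.e.\ $d(G)=\{0\}$, and apply the equivalence (a)$\leftrightarrow$(c) of Corollary~\ref{coro:iso_divis}, which reads $\iso(G)\ne\emptyset$ if and only if $t(G)\le d(G)$. Since $d(G)=\{0\}$, this condition becomes $t(G)=\{0\}$, that is, $\iso(G)\ne\emptyset$ precisely when $G$ is torsion-free. This immediately gives the first assertion of (b): a reduced group that is not torsion-free has $\iso(G)=\emptyset$.

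It then remains to treat the torsion-free case and show $\iso(G)=\{\{0\}\}$. Here I would appeal to the equivalence (a)$\leftrightarrow$(c) of Proposition~\ref{abba}: every $A\in\iso(G)$ satisfies $t(G)\le A\le d(G)$. With $G$ torsion-free and reduced one has $t(G)=d(G)=\{0\}$, which forces $A=\{0\}$; conversely $\{0\}$ is trivially divisible and satisfies $t(G)\le\{0\}\le d(G)$, so $\{0\}\in\iso(G)$ by the same proposition. Hence $\iso(G)=\{\{0\}\}$ is a singleton. I expect no genuine obstacle beyond matching each case to the correct characterisation; the only point requiring a moment's care is checking that the trivial subgroup really does qualify as a divisible isolated point in the torsion-free case, which is exactly what condition (c) of Proposition~\ref{abba} provides.
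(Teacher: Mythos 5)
Your proof is correct and follows essentially the same route as the paper: item (a) from the established characterisations (the paper cites Proposition~\ref{abba}(c), you use the equivalent Claim~\ref{Cl1}(a)$\leftrightarrow$(c)), and item (b) by specialising Corollary~\ref{coro:iso_divis} to $d(G)=\{0\}$. If anything, your treatment of the singleton case is slightly more complete than the paper's terse citation, since you explicitly derive uniqueness from Proposition~\ref{abba}(c).
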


\begin{proof}  (a) follows from Proposition \ref{abba}(c).

(b) If $G$ is reduced, then $d(G) = \{0\}$, so $ \iso(G)\ne \emptyset$ precisely when  $t(G) = \{0\}$, according to the above corollary. The last assertion follows from Corollary \ref{coro:iso_divis}.
\end{proof}

Now we provide a sharper result that complements the previous corollaries which characterized when $\iso(G) = \emptyset$.
More precisely, we show that the size of $\iso(G)$ is completely determined by the free-rank $r_0(d(G))$ as follows:

\begin{proposition}\label{rem:rank_divisble_iso} Let $G$ be an abelian group with $\iso(G) \ne  \emptyset$ $($i.e., $t(G) \leq d(G))$. Then:
\begin{enumerate}[(a)]
	\item $\iso(G)$ has size 1 $($more precisely, $\iso(G) = \{d(G)\})$ if and only if $r_0(d(G)) = 0$, i.e., $d(G) = t(d(G))$ is torsion;
	\item $\iso(G)$ has size 2 $($more precisely, $\iso(G) = \{ t(d(G)), d(G)\})$ if and only if $r_0(d(G)) = 1$, i.e., $d(G) = t(d(G)) \oplus D$ with $D\simeq \Q$ torsion-free;
	\item $|\iso(G)| = \omega$ if and only if $1 < r_0(d(G)) = n<  \omega $ (then $d(G) = t(d(G)) \oplus D$ with $D\simeq \Q^n$); and
	\item $\iso(G)$ is uncountable $($more precisely, $|\iso(G)| =2^{r_0(d(G))})$ if and only if $r_0(d(G))\geq \omega $.
\end{enumerate}
\end{proposition}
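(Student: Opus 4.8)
The plan is to reduce the entire statement to a purely linear-algebraic count of the subspaces of a $\Q$-vector space. By Proposition~\ref{abba}, $\iso(G)$ consists exactly of the divisible subgroups $A$ with $t(G)\le A\le d(G)$. First I would record the identity $t(G)=t(d(G))$: the inclusion $t(d(G))\subseteq t(G)$ is automatic, while the standing hypothesis $t(G)\le d(G)$ together with the fact that $t(G)$ is torsion forces $t(G)\subseteq t(d(G))$. Thus every $A\in\iso(G)$ satisfies $t(d(G))\le A\le d(G)$, with $A$ divisible.

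Next I would set up a bijection between $\iso(G)$ and the set of $\Q$-subspaces of the quotient $V:=d(G)/t(d(G))$. By Fact~\ref{theo:divis_structure}, $d(G)\simeq\Q^{(r_0(d(G)))}\oplus t(d(G))$, so $V\simeq\Q^{(r_0(d(G)))}$ is a torsion-free divisible group, i.e.\ a $\Q$-vector space of dimension $r_0(d(G))$. Let $\pi\colon d(G)\to V$ be the canonical projection. If $A\in\iso(G)$, then $\pi(A)=A/t(d(G))$ is a divisible torsion-free subgroup of $V$, hence a $\Q$-subspace. Conversely, for a $\Q$-subspace $W\le V$ the preimage $\pi^{-1}(W)$ is divisible, being an extension of the divisible group $t(d(G))$ by the divisible group $W$, and it clearly satisfies $t(d(G))\le\pi^{-1}(W)\le d(G)$, so it lies in $\iso(G)$. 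These two assignments are mutually inverse, whence $|\iso(G)|$ equals the number of $\Q$-subspaces of a $\Q$-vector space of dimension $r:=r_0(d(G))$.

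It then remains to count subspaces. For $r=0$ and $r=1$ the only subspaces are the trivial ones, giving $1$ and $2$ respectively; tracing them back through $\pi$ yields exactly $\{d(G)\}$ and $\{t(d(G)),d(G)\}$, which settles (a) and (b). For $1<r<\omega$ the space $\Q^r$ is countable, and since every subspace is finite-dimensional there are at most countably many of them, while the lines through the origin already form an infinite family; this gives $|\iso(G)|=\omega$ and proves (c). For infinite $r$ one has $|V|=r$, so there are at most $2^r$ subspaces, and the subspaces spanned by the sub-families of a fixed basis give $2^r$ distinct ones, yielding $|\iso(G)|=2^r$ and proving (d).

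The only genuinely delicate points are the two structural facts underpinning the bijection --- that a divisible subgroup of a $\Q$-vector space is a $\Q$-subspace, and that an extension of divisible groups is divisible --- together with the infinite-dimensional count, where both the upper bound $2^r$ (via $|V|=r$) and the lower bound (via a basis) must be produced. The finite cases are routine.
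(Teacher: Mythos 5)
Your proof is correct and follows essentially the same route as the paper: reduce via Proposition~\ref{abba} to counting the divisible subgroups between $t(d(G))$ and $d(G)$, which the paper handles by citing the fact that $\Q^n$ has countably many divisible subgroups for $1<n<\omega$ --- your bijection with the $\Q$-subspaces of $d(G)/t(d(G))$ is precisely the rigorous form of that fact. The paper's proof is only a two-line sketch, so your write-up supplies the details (the extension argument, the mutually inverse correspondences, and the cardinality $2^{r_0(d(G))}$ in the infinite-rank case) that the paper leaves to the reader.
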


\begin{proof} (a), (b) and (c) follow from the above corollaries and the fact that $\Q^n$ has countably many divisible subgroups when $1 < n<  \omega $. Similar arguments work for (d). \end{proof}	 	

Proposition \ref{rem:rank_divisble_iso}, along with Remark \ref{RemIso}, provides a large series of non-asymorphic pairs of spaces, like:
$$
\mathcal L(\Z^n)\not\aaa\mathcal L(\Q) \not\aaa\mathcal L(\Q^m)\  \mbox{ and \  $\mathcal L(\Z^n) \not\aaa\mathcal L(\Q\oplus \Z) \not\aaa\mathcal L(\Q^m)$ for any $n$ and $m>1$}
$$
and the same for the corresponding subgroup logarithmic hyperballeans.

In the next remark we discuss further some other immediate consequences of the above results concerning (only) the set $\iso(G)$ isolated points of an abelian group $G$ on the group structure of $G$.

\begin{remark}\label{rem:appl_iso}
\begin{enumerate}[(a)]
 \item Let $G$ be a virtually divisible abelian group and $D$ be a divisible abelian group. Then $G$ is divisible, provided that either $\mathcal L(G)\aaa\mathcal L(D)$ or $\LL(G)\aaa\LL(D)$. In fact, since $\iso(D)$ is non-empty, $G/d(G)$ must be torsion-free, by Corollary \ref{coro:iso_divis}. On the other hand, $G/d(G)$ must be finite (hence, torsion), since  $G$ be a virtually divisible. Thus, $G = d(G)$ is divisible.
 \item Let $G$ be a divisible abelian group and $H$ be a finitely generated abelian group such that
$\mathcal L(G)\aaa\mathcal L(H)$ or $\LL(G)\aaa\LL(H)$. Then $G$ is torsion and $H$ is free. In fact, assume first of all that both $G$ and $H$ are non-trivial. By Corollary \ref{coro:iso_divis1}, $ \iso(G)\ne \emptyset$, so $ \iso(H)\ne \emptyset$ as well.
Since $H$ is reduced, this fact  implies (by the same corollary) that  $H$ is torsion-free. Hence $H\simeq\Z^n$, for some $n\ge 0$, so
$\lvert\iso(H)\rvert=1$. This yileds $\lvert\iso(G)\rvert=1$. Since $G$ is divisbile, Proposition \ref{rem:rank_divisble_iso} applies to entail
that $G$ must be torsion.
 \item Let $G$ be a divisible torsion-free abelian group. Then Proposition \ref{rem:rank_divisble_iso} implies that $G\simeq\Q$, provided that $\mathcal L(G)\aaa\mathcal L(\Q)$ or $\LL(G)\aaa\LL(\Q)$.
 \item Under the assumption of the Generalised Continuum Hypothesis, if $G, D$ are divisible torsion-free abelian group
such that  at least one of them has infinite rank, then the following statements are equivalent:
\begin{itemize}
 \item[(d$_1$)] $\mathcal L(G)\aaa\mathcal L(D)$;
 \item[(d$_2$)] $\LL(G)\aaa\LL(D)$;
 \item[(d$_3$)] $|\iso(G)| = |\iso(D)|$;
 \item[(d$_4$)] $r_0(G) = r_0(D)$;
 \item[(d$_5$)] $G\simeq D$.
\end{itemize}
The implications (d$_5$) $\rightarrow$ (d$_1$) $\rightarrow$ (d$_3$) and (d$_5$) $\rightarrow$ (d$_2$) $\rightarrow$ (d$_3$) are trivial and were already discussed. Implication (d$_3$) $\rightarrow$ (d$_4$) follows from Proposition \ref{rem:rank_divisble_iso}, in particular the equality $\lvert\iso(G)\rvert =2^{r_0(d(G))}$, and GCH. The implication (d$_4$) $\rightarrow$ (d$_5$) follows from the fact that divisible torsion-free abelian groups are determined by their free-rank up to isomorphism (see Fact \ref{theo:divis_structure}).
\end{enumerate}
\end{remark}
The assertions in Remark \ref{rem:appl_iso}(c) and (d) are examples of what we call ``rigidity results'', to which Section \ref{sec:rigidity} is devoted. It is trivial that, if $G$ and $H$ are two isomorphic groups, then $\mathcal L(G)\approx\mathcal L(H)$ (more precisely, $\exp\BB_G\approx\exp\BB_H$), and $\LL(G)\aaa\LL(H)$. The converse implication is not true (Corollaries \ref{coro:Z_Zp_asy} and \ref{coro:asdim_L_N}). A rigidity result is a list of conditions on balleans $\mathcal L(G)$ and $\mathcal L(H)$ ($\LL(G)$ and $\LL(G)$), where $G$ and $H$ are two groups, which imply that $G\simeq H$, provided that $\mathcal L(G)\approx\mathcal L(H)$ ($\LL(G)\aaa\LL(H)$, respectively). We mention here that there is another, more common meaning of rigidity in the coarse context (see \cite{Roe}).

\subsection{The subgroup exponential hyperballean $\mathcal L(G)$}\label{sub:sub_exp}



First of all, we provide some basic, although very important, examples of $\mathcal L(G)$.  For example, if $G$ is a finite group, then both $\BB_G$ and $\exp\BB_G$ are bounded. In particular, $\mathcal L(G)$ is bounded as well.

\begin{proposition}\label{last:propo} Let $G$ be one of the groups $\Z$ and $\Z_{p^\infty}$ for some prime $p$. Then
\begin{enumerate}[(a)]
\item all balls in $\mathcal L(G)$ are finite;
\item $\mathcal L(G)$ has two connected components, of which one is a singleton $($namely, $\{\{0\}\}$, when $G = \Z$, otherwise $\{G\})$;
\item $\mathcal L(G)$  is thin and thus $\asdim\mathcal L(G)=0$ since it is cellular.
\end{enumerate}
\end{proposition}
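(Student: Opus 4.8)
The plan is to treat the two groups in parallel, exploiting the explicit description of their subgroup lattices: $L(\Z)=\{\{0\}\}\cup\{n\Z:n\geq 1\}$, while $L(\Z_{p^{\infty}})$ is the chain $\{0\}=C_0\subsetneq C_1\subsetneq\cdots$ together with the whole group, where $C_k$ denotes the unique subgroup of order $p^k$. I would first settle (b) using the characterisation (obtained from Lemma \ref{2.3} and Remark \ref{rem:logarithmic}(a)) that two subgroups lie in the same connected component of $\mathcal L(G)$ exactly when they are commensurable. For $\Z$, since $n\Z\cap m\Z=\mathrm{lcm}(n,m)\Z$ has finite index in both $n\Z$ and $m\Z$ whenever $n,m\geq 1$, all nonzero subgroups are mutually commensurable, whereas $\{0\}$ is commensurable only with itself; this yields the two components, with $\{\{0\}\}$ the singleton. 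For $\Z_{p^{\infty}}$, the $C_k$ (and $\{0\}$) are pairwise commensurable, while $\Z_{p^{\infty}}$ has infinite index over each proper subgroup and is thus isolated, so here the singleton component is $\{\Z_{p^{\infty}}\}=\{G\}$.

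For (a), write $F'=\{0\}\cup F\cup(-F)$ for a radius $F\in[G]^{<\omega}$ and set $N=\lvert F'\rvert$, so that $B_G(H,F)=F'+H$. If $Z\in\exp B_G(H,F)$, then $Z\subseteq F'+H$ and $H\subseteq F'+Z$, and Lemma \ref{2.3} applied twice gives $\lvert Z:Z\cap H\rvert\leq N$ and $\lvert H:Z\cap H\rvert\leq N$. For $\Z$, with $H=n\Z$, $Z=m\Z$ and $g=\gcd(n,m)$, these read $n/g\leq N$ and $m/g\leq N$, so $m\leq Ng\leq Nn$ and only finitely many $m$ qualify. For $\Z_{p^{\infty}}$ the same estimates give $p^{|j-k|}\le N$ for $Z=C_j$, $H=C_k$, again leaving finitely many $j$. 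Since balls stay inside connected components, the ball centred at the isolated point is a singleton, so all balls of $\mathcal L(G)$ are finite.

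Part (c) is the crux. By the characterisation of thin non-connected balleans recalled after Proposition \ref{prop:charact_thin}, and since $\mathcal L(G)$ has exactly one non-trivial component $\QQ$ (the other being the singleton from (b)), it suffices to show that $\QQ$ is thin. The refinement over (a) is that $F'$ is not merely finite but \emph{bounded in the group}: for $\Z$ one has $F'\subseteq[-M,M]$ for some $M$, and for $\Z_{p^{\infty}}$ one has $F'\subseteq C_L$ for some $L$. The main obstacle is to turn this boundedness into the collapse of large balls to singletons, and this is precisely where one must use the actual set $F'$ rather than the index estimates of Lemma \ref{2.3}. For $\Z$: if $m\neq n$ and $m\Z\in\exp B_\Z(n\Z,F)$ with $n>MN$, the index bounds force $g=\gcd(n,m)=n$, since otherwise $g\le n/2<n-M$ makes the generating residue $g$ of $\langle m\bmod n\rangle$ fall outside $F'\bmod n\subseteq[0,M]\cup[n-M,n)$, contradicting $m\Z\subseteq F'+n\Z$; hence $n\mid m$, and the symmetric analysis of $n\Z\subseteq F'+m\Z$ then forces $m=n$. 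Thus for $n>MN$ the ball is $\{n\Z\}$, and $V=\{n\Z:1\le n\le MN\}$ witnesses thinness. For $\Z_{p^{\infty}}$: once $k>L$ one has $F'+C_k=C_k$, so $C_j\subseteq F'+C_k$ gives $j\le k$, while $C_k\subseteq F'+C_j\subseteq C_{\max(L,j)}$ forces $j\ge k$; hence $j=k$, and $V=\{C_0,\dots,C_L\}$ works. In both cases $\QQ$ is thin, so $\mathcal L(G)$ is thin, hence cellular, hence $\asdim\mathcal L(G)=0$ by the facts recorded after Definition \ref{def_cell}.
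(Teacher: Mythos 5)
Your proof is correct and follows essentially the same route as the paper's: parts (a) and (b) are the steps the paper dismisses as trivial (you justify them via commensurability and the Lemma~\ref{2.3} index estimates), and the crux (c) is proved exactly as in the paper, by reducing to thinness of the single non-trivial component and showing that, for a radius contained in $[-M,M]$ (resp.\ in $C_L$), every ball centred at $n\Z$ with $n$ large (resp.\ at $C_k$ with $k>L$) collapses to a singleton by working modulo $n$. The only cosmetic difference is in the $\Z$ case: where the paper observes that the image of $F$ in $\Z/n\Z$ contains no non-trivial subgroup (threshold $n>3m$), you combine the index bound $g\ge n/N$ from Lemma~\ref{2.3} with a residue argument about the generator $\gcd(m,n)$ (threshold $n>MN$); both arguments yield the same conclusion.
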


\begin{proof} Items (a) and (b) are trivial.

(c) {\bf Case $G=\Z$.} To show that $G = \mathcal L(\Z)$ is thin take an arbitrary finite subset $F$ of $\Z$ and choose $m$  so that $F\subseteq [-m,m]\cap \Z$.
%
%
Pick $n>3m$. We claim that $B_{\mathcal L(\Z)}(n\Z,F)=\{n\Z\}$. We carry out the proof for $F= [-m,m]\cap \Z$, obviously, this implies the general case.

Consider the quotient map $q: \Z\to \Z(n):=\Z/n\Z$ and notice that the subset $q(F)$ of $\Z(n)$ contains no
non-trivial subgroups, by the assumption $3m< n$.  Pick $H\in B(n\Z,F)$, then $q(H) \subseteq q(F)$, so $q(H) = \{0\}$ in $\Z/n\Z$, hence  $H\leq  n\Z$. Thus, $H = l\Z$ for some multiple $l$ of $n$. Since $n\Z\in B(H,F)$, with $l\geq n \geq 3m$, the previous
argument implies $n\Z \leq H$. Therefore, $H = n\Z$.

{\bf Case $G=\Z_{p^{\infty}}$.}  We consider now the group $G = \Z_{p^\infty}$, where $p$ is a prime. 
Denote by $H_{n}$ the subgroup of $\mathbb{Z}_{p^{\infty}}$ of order $p^{n}$, take an arbitrary finite subset $F$ of $\mathbb{Z}_{p^{\infty}}$ and choose $m$  so that $F\subseteq  H_{m}$. Then $B(H_{n}, F)= \{ H_{n}\}$  for each $n> m$.
\end{proof}

\begin{remark}\label{q:finite_balls} Let $G$ be an arbitrary group. According to Fact \ref{fact:finite_balls}, all balls in $\mathcal L(G)$ centered at $\{e_G\}$ are finite. Nevertheless, this is not true for all balls of $\mathcal L(G)$. One can find examples of abelian groups $G$ such that some balls in $\mathcal L(G)$ centred at $G$ are infinite.  For example, let $G=\Pi_{n\in\N}G_n$, where $G_n\simeq\Z/2\Z$, for every $n\in\N$. For every $n\in\N$, denote by $a_n$ the element of $G$ such that $p_n(a_n)=1$ and, for every $i\neq n$, $p_i(a_n)=0$. Then, for every $n\in\N$, $\langle\{a_i: i\in\N\setminus\{1,n\}\}\cup\{a_n+a_1\}\rangle\in B_{\mathcal L(G)}(G,\langle a_1\rangle)$ and thus this ball contains infinitely many elements.
\end{remark}

\begin{corollary}\label{coro:Z_Zp_asy}
$\mathcal L(\Z)$ and $\mathcal L(\Z_{p^\infty})$ are asymorphic, for every prime $p$.
\end{corollary}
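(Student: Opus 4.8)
The plan is to reduce the statement to a comparison of connected components, exploiting that both balleans have only finitely many of them. By Proposition \ref{last:propo}, each of $\mathcal L(\Z)$ and $\mathcal L(\Z_{p^\infty})$ is thin and has exactly two connected components, one of which is a singleton, namely the isolated point ($\{0\}$ for $\Z$, and $\Z_{p^\infty}$ itself for $\Z_{p^\infty}$). Thus in both spaces $\lvert\mathcal{I}so\rvert=1$ and there is a single non-trivial connected component, which I denote $\mathcal C_\Z$ and $\mathcal C_p$ respectively; concretely $\mathcal C_\Z=\{n\Z : n\geq 1\}$ and $\mathcal C_p=\{H_n : n\geq 0\}$ (where $H_0=\{0\}$), both countably infinite.

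First I would pin down the coarse structure on each non-trivial component. Each such component is itself a connected thin ballean, so by Proposition \ref{prop:charact_thin} it coincides with the ideal ballean $\BB_{\II}$, where $\II$ is its ideal of bounded subsets. Next I would show $\II=[\mathcal C]^{<\omega}$: on the one hand, every finite subset of a connected ballean is bounded (connect each of its points to a fixed base point and amalgamate the finitely many radii by upper multiplicativity and symmetry); on the other hand, since by Proposition \ref{last:propo}(a) all balls of $\mathcal L(G)$ are finite, every bounded subset is contained in a single ball and is therefore finite. Hence each non-trivial component is exactly the ideal ballean $\BB_{[\mathcal C]^{<\omega}}$ on a countably infinite set.

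By the Corollary following Fact \ref{fact:asy_ideal}, two ideal balleans induced by the ideals of finite subsets are asymorphic precisely when their supports have equal cardinality; since $\lvert\mathcal C_\Z\rvert=\lvert\mathcal C_p\rvert=\omega$, this yields an asymorphism $\mathcal C_\Z\aaa\mathcal C_p$. Finally, since both $\mathcal L(\Z)$ and $\mathcal L(\Z_{p^\infty})$ have finitely many connected components, Remark \ref{RemIso} applies: the equality $\lvert\mathcal{I}so(\mathcal L(\Z))\rvert=\lvert\mathcal{I}so(\mathcal L(\Z_{p^\infty}))\rvert=1$ (matching the two singleton components) together with the asymorphism of the unique non-trivial components are sufficient to conclude $\mathcal L(\Z)\aaa\mathcal L(\Z_{p^\infty})$.

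The only genuinely delicate point is the identification $\II=[\mathcal C]^{<\omega}$, i.e., that the bounded subsets of each non-trivial component are exactly the finite ones; this is where the finiteness of all balls from Proposition \ref{last:propo}(a) is essential, and it is what normalises the two components into the form $\BB_{[\mathcal C]^{<\omega}}$ to which Fact \ref{fact:asy_ideal} and its corollary can be applied. Everything else is bookkeeping with the component structure guaranteed by Remark \ref{RemIso}.
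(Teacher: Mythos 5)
Your proposal is correct and follows essentially the same route as the paper's own proof: identify the two connected components via Proposition \ref{last:propo}, use thinness and Proposition \ref{prop:charact_thin} to recognise each non-trivial component as the ideal ballean of its finite subsets, apply Fact \ref{fact:asy_ideal} (via its corollary) to match the two countably infinite components, and invoke Remark \ref{RemIso} to assemble the global asymorphism. If anything, you are slightly more careful than the paper, which asserts without comment that bounded subsets of the components are finite, whereas you justify this from the finiteness of balls in Proposition \ref{last:propo}(a).
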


\begin{proof} By Proposition \ref{last:propo}(b), both $\mathcal L(\Z)$ and $\mathcal L(\Z_{p^\infty})$ have two connected components, namely,
$$
\QQ_{\mathcal L(\Z)}(\Z),  \ \QQ_{\mathcal L(\Z)}(\{0\})=\{0\}, \ \QQ_{\mathcal L(\Z_{p^\infty})}(\Z_{p^{\infty}})=\{\Z_{p^{\infty}}\}, \ \mbox{  and }\ \QQ_{\mathcal L(\Z_{p^\infty})}(\{0\}).
$$
 Moreover, $\lvert\QQ_{L(\Z)}(\Z)\rvert=\lvert\QQ_{\mathcal L(\Z_{p^\infty})}(\{0\})\rvert=\omega$. Since $\mathcal L(\Z)$ and $\mathcal L(\Z_{p^{\infty}})$ are thin, in particular, also $\QQ_{\mathcal L(\Z)}(\Z)$ and $\QQ_{\mathcal L(\Z_{p^\infty})}(\{0\})$ are thin. Hence, Proposition \ref{prop:charact_thin} implies that $\QQ_{\mathcal L(\Z)}(\Z)$ and $\QQ_{\mathcal L(\Z_{p^\infty})}(\{0\})$ coincide with the ideal balleans associated to the ideals of all their bounded subsets, i.e., finite subsets, namely
\begin{equation}\label{eq:thin}
\QQ_{\mathcal L(\Z)}(\Z)=\BB_\II\quad\mbox{and}\quad \QQ_{\mathcal L(\Z_{p^\infty})}(\{0\})=\BB_{\mathcal J},
\end{equation}
where $\II=[\QQ_{\mathcal L(\Z)}(\Z)]^{<\infty}$ and $\mathcal J=[\QQ_{\mathcal L(\Z_{p^\infty})}(\{0\})]^{<\infty}$.

Fix a bijecton $\varphi\colon\mathcal L(\Z)\to\mathcal L(\Z_{p^\infty})$ such that $\varphi(\{0\})=\Z_{p^\infty}$. We claim that $\varphi$ is an asymorphism.
We can apply Remark \ref{RemIso} and the claim follows once we prove that both $\varphi|_{\QQ_{\mathcal L(\Z)}(\{0\})}$ and $\varphi|_{\QQ_{\mathcal L(\Z)}(\Z)}$ are asymorphisms. While the first restriction is trivially an asymorphism, Fact \ref{fact:asy_ideal} and \eqref{eq:thin} imply that also the second one is an asymorphism.
\end{proof}

In contrast to $\mathcal{L}(\mathbb{Z})$, for $n>1$  $\mathcal{L}(\mathbb{Z}^{n})$ is not weakly thin, and, in particular, it is not thin. To see that $\mathcal{L}(\mathbb{Z}^{n})$, $n>1$, has a non-thin connected component, we put
$F= \{(1,0,\ldots , 0), (0,\ldots , 0)\}$ and note that $2\mathbb{Z}\times  S\in B(\mathbb{Z}\times  S, F)$ for each subgroup $S$ of $\mathbb{Z} ^{n-1}$.

\begin{question}\label{question:LZn_cellular}
Is $\mathcal L(\Z^n)$ cellular for every $n\in\N$?
\end{question}

For every $n\in\N$, denote by $\mathcal R(\Z^n)$ the subballean of $\mathcal L(\Z^n)$ whose support is the family $R(\Z^n)$ of rectangular subgroups of $\Z^n$, i.e., $R(\Z^n)=\{k_1\Z\times\cdots\times k_n\Z : k_1,\dots,k_n\in\Z\}$.
Then, for every $n\in\N$, $\mathcal R(\Z^n)$ is cellular. In fact, it is trivial that $\mathcal R(\Z^n)\aaa\Pi_{i=1}^n\mathcal L(\Z)$ and products of cellular balleans are cellular.

For every {\em locally finite} group $G$ (i.e., every finitely generated subgroup of $G$ is finite), the ballean $\mathcal{B}_{G}$ is cellular, equivalently, $\asdim\mathcal{B}_{G}=0$, so $\exp\mathcal{B}_{G}$ and $\mathcal{L}(G)$ are cellular (Proposition \ref{prop:exp_cellular}).

\begin{question}
\label{q:LG_cellular}  Is the ballean $\mathcal{L}(G)$ cellular for an arbitrary group $G$?
\end{question}

\begin{theorem}
Let $n\in\N$. Then $\mathcal L(\Z^2)\aaa\mathcal L(\Z^n)$ if and only if $n=2$.
\end{theorem}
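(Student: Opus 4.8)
The direction $n=2\Rightarrow\mathcal L(\Z^2)\aaa\mathcal L(\Z^2)$ is trivial, so the content is the converse, and the plan is to separate $\mathcal L(\Z^2)$ from $\mathcal L(\Z^n)$, for each $n\neq 2$, by a suitable asymorphism invariant. The invariant I would use is $\nu(X):=$ the number of connected components of $X$ that are \emph{not} thin. First I would record that $\nu$ is preserved by asymorphisms. On the one hand, thinness itself is an asymorphism invariant: if $\varphi\colon X\to Y$ is an asymorphism and $X$ is thin, then for a radius $\beta$ of $Y$ the bornologousness of $\varphi^{-1}$ yields a radius $\alpha$ of $X$ with $\varphi^{-1}(B_Y(y,\beta))\subseteq B_X(\varphi^{-1}(y),\alpha)$ for all $y$; applying thinness of $X$ to $\alpha$ gives a bounded witness $V$, and then $\varphi(V)$ witnesses thinness of $Y$ for $\beta$. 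On the other hand, by Remark \ref{RemIso} an asymorphism induces a bijection of the non-trivial (i.e.\ non-singleton) connected components under which corresponding components are asymorphic. Since singleton components are trivially thin, these two facts together show that $\nu$ is an asymorphism invariant.

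Next I would compute $\nu(\mathcal L(\Z^n))$ from the list of connected components in Remark \ref{rem:Q_Zn}: apart from the isolated point $\QQ_{\mathcal L(\Z^n)}(\{0\})=\{0\}$ and the single top component $\QQ_{\mathcal L(\Z^n)}(\Z^n)$, for each $0<k<n$ there are countably many components, each asymorphic to $\QQ_{\mathcal L(\Z^k)}(\Z^k)$. The computation then reduces to two facts: the rank-one component $\QQ_{\mathcal L(\Z)}(\Z)$ is thin (Proposition \ref{last:propo}(c)), while the rank-two component $\QQ_{\mathcal L(\Z^2)}(\Z^2)$ is \emph{not} thin (established below). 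Granting these, for $n\leq 1$ every component is a singleton or of rank one, hence thin, giving $\nu(\mathcal L(\Z^0))=\nu(\mathcal L(\Z))=0$; for $n=2$ the only non-thin component is the unique top (rank-two) one, giving $\nu(\mathcal L(\Z^2))=1$; and for $n\geq 3$ rank two occurs as a middle rank, contributing countably many mutually asymorphic non-thin components, so $\nu(\mathcal L(\Z^n))=\omega$. As all these values differ from $1$ when $n\neq 2$, no asymorphism $\mathcal L(\Z^2)\aaa\mathcal L(\Z^n)$ can exist.

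The step I expect to be the main obstacle is the non-thinness of $C:=\QQ_{\mathcal L(\Z^2)}(\Z^2)$, the set of finite-index subgroups of $\Z^2$. Fix the radius $F=\{(1,0),(0,0)\}$. For every $m\geq 1$ the subgroup $H_m:=\Z\times m\Z$ lies in $C$, and $2\Z\times m\Z\in B_{\mathcal L(\Z^2)}(H_m,F)\setminus\{H_m\}$: indeed $2\Z\times m\Z\subseteq H_m\subseteq B_{\Z^2}(H_m,F)$, while $H_m\subseteq B_{\Z^2}(2\Z\times m\Z,F)$ because $2\Z\cup(1+2\Z)=\Z$. Thus every $H_m$ has a non-singleton $F$-ball. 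If $C$ were thin, there would be a bounded set $V\subseteq C$ with $B_{\mathcal L(\Z^2)}(H,F)=\{H\}$ for all $H\in C\setminus V$, forcing $\{H_m:m\geq 1\}\subseteq V$; in particular $\Z^2=H_1\in V$. Boundedness of $V$ provides a finite $\gamma\subseteq\Z^2$ with $H\in B_{\mathcal L(\Z^2)}(H',\gamma)$ for all $H,H'\in V$. Taking $H=\Z^2$ and $H'=\Z\times p\Z$ for a prime $p$ forces $\Z^2\subseteq B_{\Z^2}(\Z\times p\Z,\gamma)=(\Z\times p\Z)\cup(\gamma+\Z\times p\Z)\cup(-\gamma+\Z\times p\Z)$, a union of at most $2\lvert\gamma\rvert+1$ cosets of the index-$p$ subgroup $\Z\times p\Z$. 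Choosing $p>2\lvert\gamma\rvert+1$ makes this inclusion impossible, a contradiction. Hence $C$ is not thin, and since by Remark \ref{rem:Q_Zn} every rank-two component of every $\mathcal L(\Z^n)$ is asymorphic to $C$, the computation of $\nu$ above is complete.
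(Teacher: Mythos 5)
Your proof is correct and takes essentially the same route as the paper's: both arguments distinguish $\mathcal L(\Z^2)$ from $\mathcal L(\Z^n)$ by combining the bijection of connected components induced by an asymorphism (Remark \ref{RemIso}) with the description of those components in Remark \ref{rem:Q_Zn} and the thin/non-thin dichotomy --- rank-one components are thin by Proposition \ref{last:propo}(c), while rank-two components are not. Your packaging of this as the invariant $\nu$, together with the explicit checks that thinness is preserved by asymorphisms and that $\QQ_{\mathcal L(\Z^2)}(\Z^2)$ is genuinely non-thin (including the unboundedness of the witnessing family $\{\Z\times m\Z\}_m$), simply makes rigorous what the paper asserts in one line.
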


\begin{proof} We have already proved that $\mathcal L(\Z)\not\aaa\mathcal L(\Z^2)$.
	
Now suppose that $n\ge 3$. Fix, by contradiction, an asymorphism $\varphi\colon\mathcal L(\Z^2)\to\mathcal L(\Z^n)$. As recalled in Remark \ref{RemIso}, $\varphi$ induces asymorphisms between the connected components of those two balleans. Because of Remark \ref{rem:Q_Zn},  one of those restrictions is an asymorphism between $\QQ_{\mathcal L(\Z)}(\Z)$ and $\QQ_{\mathcal L(\Z^2)}(\Z^2)$. However, this is an absurd, since the first ballean is thin, while the second one has not that property.
\end{proof}

\begin{question}
Is it true that $\mathcal L(\Z^n)\aaa\mathcal L(\Z^m)$ if and only if $n=m$?
\end{question}


\subsection{The subgroup logarithmic hyperballean $\LL(G)$ and its asymptotic dimension}\label{sec:logL}

\begin{proposition}\label{prop:Zp_N}
For every prime $p$, $\LL(\Z_{p^\infty})$ is asymorphic to the coproduct of $\N$ and a singleton.
\end{proposition}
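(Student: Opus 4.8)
The plan is to read off everything from the explicit structure of the subgroup lattice $L(\Z_{p^\infty})$ together with the characterisation of $\LL(G)$ by the extended metric $d'$ given right after Lemma \ref{2.3}. Recall that the proper subgroups of $\Z_{p^\infty}$ form a chain $\{0\}=H_0\subsetneq H_1\subsetneq H_2\subsetneq\cdots$, where $H_n$ is the unique subgroup of order $p^n$, and that the only remaining subgroup is $\Z_{p^\infty}$ itself. First I would identify the connected components of $\LL(\Z_{p^\infty})$. Since $\Z_{p^\infty}$ is divisible, Claim \ref{Cl1} yields $\Z_{p^\infty}\in\iso(\Z_{p^\infty})$, so $\QQ_{\LL(\Z_{p^\infty})}(\Z_{p^\infty})=\{\Z_{p^\infty}\}$ is a singleton; concretely, $\Z_{p^\infty}/H_n\simeq\Z_{p^\infty}$ is infinite, so $\Z_{p^\infty}$ is commensurable with no proper subgroup. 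On the other hand, any two of the finite subgroups $H_n$ are commensurable, so all of them lie in a single connected component $\QQ:=\QQ_{\LL(\Z_{p^\infty})}(\{0\})$.

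Next I would compute the metric on $\QQ$. For $n\leq m$ one has $H_n\cap H_m=H_n$, whence $\lvert H_n:H_n\cap H_m\rvert=1$ and $\lvert H_m:H_n\cap H_m\rvert=p^{m-n}$; therefore $d'(H_n,H_m)=\log p^{\lvert n-m\rvert}=\lvert n-m\rvert\log p$. Thus the bijection $H_n\mapsto n$ transports the extended metric $d'$ on $\QQ$ to the scaled Euclidean metric $(n,m)\mapsto\lvert n-m\rvert\log p$ on $\N$. Because the scaling factor $\log p>0$ is a positive constant, this bijection is bi-Lipschitz, hence an asymorphism between $\QQ$ and $\N$ equipped with its metric ballean structure (Example \ref{ex:ball}(a)).

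Finally, $\LL(\Z_{p^\infty})$ has exactly two connected components, namely $\QQ\aaa\N$ and the singleton $\{\Z_{p^\infty}\}$. Since a ballean with finitely many connected components coincides with the coproduct of those components (\S\ref{sub:cat}), I conclude that $\LL(\Z_{p^\infty})$ is asymorphic to the coproduct of $\N$ and a singleton, as claimed.

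No step here is a genuine obstacle: the whole argument is a translation through the $d'$-characterisation of $\LL$. The only point that really must be checked is that $\Z_{p^\infty}$ forms its own connected component (via divisibility and non-commensurability with the finite subgroups), since it is precisely the \emph{finiteness} of the number of components that licenses the passage from ``coincides with its components'' to the coproduct.
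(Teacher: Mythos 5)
Your proof is correct and follows essentially the same route as the paper: the paper's one-line argument is precisely that the chain of finite subgroups $H_n$ is isometric to $(\log p)\N$, which is the isometry $H_n\mapsto n\log p$ you compute via $d'$. You simply make explicit the bookkeeping the paper leaves implicit, namely that $\Z_{p^\infty}$ is an isolated point and that a ballean with finitely many connected components is the coproduct of those components.
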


\begin{proof} It is easy to check that the subspace $S$ of all finite subgroups of $\mathbb{Z}_{p^\infty}$ is isometric to $(\log p) \mathbb{N}$  with the metric $d(x,y)=\lvert x-y\rvert$, for every $x,y\in(\log p)\N$.
\end{proof}

\begin{corollary}\label{coro:asdim_L_N}
 For every pair of prime numbers $p,q$, $\asdim\LL(\Z_{p^{\infty}})\aaa\asdim\LL(\Z_{q^{\infty}})$. Moreover,
 $$
 \asdim\LL(\Z_{p^\infty})=1.
 $$
\end{corollary}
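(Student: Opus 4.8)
The plan is to read the whole statement off Proposition~\ref{prop:Zp_N}, whose description of $\LL(\Z_{p^\infty})$ does not depend on $p$. Recall why that proposition holds: the proper subgroups of $\Z_{p^\infty}$ are exactly the finite ones $H_0\subseteq H_1\subseteq\cdots$, which are pairwise commensurable and, under $d^\prime$, form a subspace $S$ isometric to $(\log p)\N$, whereas $\Z_{p^\infty}$ is divisible and hence an isolated point by Claim~\ref{Cl1}. As $\LL(\Z_{p^\infty})$ has only these two connected components, it coincides with the coproduct $S\coprod\{\Z_{p^\infty}\}$.

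For the first assertion I would observe that the scaling bijection $(\log p)\N\to\N$, $k\log p\mapsto k$, is bi-Lipschitz and hence an asymorphism, so $S\aaa\N$ for every prime $p$. Therefore $\LL(\Z_{p^\infty})\aaa\N\coprod\{\ast\}\aaa\LL(\Z_{q^\infty})$ for all primes $p,q$, which is the first claim (reading it as an asymorphism of balleans).

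For the asymptotic dimension, coarse invariance of $\asdim$ reduces the task to computing $\asdim(\N\coprod\{\ast\})$. The lower bound is immediate: the subballean $S\aaa\N$ sits inside $\LL(\Z_{p^\infty})$ and subballeans cannot raise the asymptotic dimension, giving $\asdim\LL(\Z_{p^\infty})\ge\asdim\N=1$ (the value recorded in Section~\ref{sec:basic}).

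The one point requiring care is the matching upper bound: adjoining the bounded singleton must not increase the dimension. I would prove this directly. Fix a radius $\alpha$ and pick, for $\N$, a uniformly bounded cover $\mathcal V=\mathcal V_0\cup\mathcal V_1$ witnessing $\asdim\N\le1$ at $\alpha$. Set $\mathcal U_0=\mathcal V_0\cup\{\{\ast\}\}$ and $\mathcal U_1=\mathcal V_1$; this cover is still uniformly bounded, and the required $\alpha$-separation in each colour is inherited for free, because $\ast$ lies in a separate connected component: any finite-radius ball centred in $\N$ stays within $\N$ and avoids $\ast$, while the ball around $\ast$ is just $\{\ast\}$ and meets no member of $\mathcal V$. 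Hence $\asdim(\N\coprod\{\ast\})\le1$, and together with the lower bound this yields $\asdim\LL(\Z_{p^\infty})=1$.
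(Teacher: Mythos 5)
Your proof is correct and takes essentially the same route as the paper: the corollary is read off Proposition~\ref{prop:Zp_N}, whose description $\LL(\Z_{p^\infty})\aaa\N\coprod\{\ast\}$ is independent of $p$, and you correctly interpret the misprinted first assertion as an asymorphism of the balleans themselves. Your only additions are the routine details the paper leaves implicit, namely the bi-Lipschitz rescaling $(\log p)\N\to\N$ and the cover argument showing that the isolated point does not push $\asdim$ above $\asdim\N=1$.
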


\begin{theorem}
\label{4.12}  $\asdim\LL(\mathbb{Z})=\infty$.  \end{theorem}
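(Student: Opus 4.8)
The plan is to prove $\asdim\LL(\Z)=\infty$ by showing that $\asdim\LL(\Z)\ge n$ for every $n\in\N$. To do this I would, for each $n$, embed $\N^n$ (with its metric ballean structure, for which $\asdim\N^n=n$ as recalled in the excerpt) as a subballean of the non-trivial connected component $\QQ_{\LL(\Z)}(\Z)$ in such a way that the embedding is a coarse equivalence onto its image. Since asymptotic dimension is a coarse invariant and is monotone under passage to subballeans, this forces $\asdim\LL(\Z)\ge\asdim\N^n=n$, and letting $n\to\infty$ gives the claim. This mirrors the strategy of Proposition \ref{dimHamm}, where a copy of $\N^n$ was produced inside $\mathbb H_\omega$.

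First I would work with the metric description of $\LL(\Z)$ furnished by Lemma \ref{2.3}, namely the extended metric $d^\prime$. For nonzero subgroups $a\Z$ and $b\Z$ one has $a\Z\cap b\Z=\mathrm{lcm}(a,b)\Z$, so that $\lvert a\Z:a\Z\cap b\Z\rvert=b/\gcd(a,b)$ and symmetrically, whence
\[
d^\prime(a\Z,b\Z)=\log\max\{a/\gcd(a,b),\,b/\gcd(a,b)\}.
\]
Note that any two nonzero subgroups of $\Z$ are commensurable, so all these distances are finite and the embedded copy stays inside $\QQ_{\LL(\Z)}(\Z)$.

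Next I fix $n$ distinct primes $p_1<\cdots<p_n$ and define $\varphi\colon\N^n\to L(\Z)$ by $\varphi(\mathbf x)=\big(\prod_{i}p_i^{x_i}\big)\Z$. Writing $m=\prod p_i^{x_i}$ and $k=\prod p_i^{y_i}$, unique factorisation gives $\gcd(m,k)=\prod p_i^{\min(x_i,y_i)}$, hence $m/\gcd(m,k)=\prod_{x_i>y_i}p_i^{x_i-y_i}$ and $k/\gcd(m,k)=\prod_{y_i>x_i}p_i^{y_i-x_i}$. Taking logarithms,
\[
d^\prime(\varphi(\mathbf x),\varphi(\mathbf y))=\max\Big\{\textstyle\sum_{x_i>y_i}(x_i-y_i)\log p_i,\ \sum_{y_i>x_i}(y_i-x_i)\log p_i\Big\}.
\]
The decisive point is that this maximum is comparable to the $\ell^1$-distance: the two sums are nonnegative and add up to $\sum_i\lvert x_i-y_i\rvert\log p_i$, so the elementary bound $\tfrac{u+v}{2}\le\max\{u,v\}\le u+v$ yields
\[
\tfrac{\log p_1}{2}\,\lVert\mathbf x-\mathbf y\rVert_1\le d^\prime(\varphi(\mathbf x),\varphi(\mathbf y))\le(\log p_n)\,\lVert\mathbf x-\mathbf y\rVert_1.
\]

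Finally, these two-sided bounds show that $\varphi$ is a bi-Lipschitz (in particular injective) map of $\N^n$ onto its image, which is a subballean of $\LL(\Z)$; being bi-Lipschitz onto its image, $\varphi$ is a coarse equivalence between $\N^n$ and that image. Coarse invariance of asymptotic dimension then gives that the image has asymptotic dimension $\asdim\N^n=n$, and monotonicity of $\asdim$ under subballeans gives $\asdim\LL(\Z)\ge n$. As $n$ is arbitrary, $\asdim\LL(\Z)=\infty$. I expect the only step needing care to be the passage from the $\max$ appearing in $d^\prime$ to the $\ell^1$-norm, i.e.\ checking that the coordinates in which $\mathbf x$ and $\mathbf y$ differ in opposite directions do not break the bi-Lipschitz equivalence; the half-sum estimate above handles this cleanly, and the choice of equal-sized prime exponents keeps the multiplicative constants uniform.
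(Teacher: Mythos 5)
Your proof is correct, and its skeleton is exactly that of the paper: both arguments embed $\N^n$ into the unbounded component of $\LL(\Z)$ via the family $S_n=\{p_1^{m_1}\cdots p_n^{m_n}\Z : m_i\in\N\}$ of subgroups indexed by prime exponents, identify the induced subgroup metric with the taxi-driver ($\ell^1$) metric up to coarse equivalence, and then invoke $\asdim\N^n=n$ together with coarse invariance and monotonicity of $\asdim$ under subballeans. Where you genuinely diverge is in the key estimate comparing the two metrics, and your version is more elementary. The paper first rewrites $d_{\log}$ as $\sum_i\log(p_i)\max\{m_i-m_i^\prime,0\}$ (after choosing the larger of the two products), then proves $\BB_{d_{\log}}\prec\BB_{d_T}$ by assuming without loss of generality that every $p_i$ is at least the base of the logarithm, splitting the index set into $I_+=\{i : m_i\geq m_i^\prime\}$ and $I_-=\{i : m_i<m_i^\prime\}$, and using the inequality $\prod_{i\in I_+}p_i^{m_i-m_i^\prime}\geq\prod_{i\in I_-}p_i^{m_i^\prime-m_i}$ to control the $I_-$ coordinates by the $I_+$ ones, arriving at $d_T\leq n\,d_{\log}$. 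You instead note that $d^\prime(\varphi(\mathbf x),\varphi(\mathbf y))$ is the maximum of two nonnegative quantities whose \emph{sum} is $\sum_i\lvert x_i-y_i\rvert\log p_i$, so the one-line inequality $\max\{u,v\}\geq\tfrac{u+v}{2}$ gives two-sided bi-Lipschitz bounds with explicit constants $\tfrac{\log p_1}{2}$ and $\log p_n$. This buys a symmetric argument with no case analysis, no normalization hypothesis relating the primes to the base of the logarithm, and a slightly stronger conclusion (a bi-Lipschitz bijection onto the image, hence an asymorphism, rather than merely equality of the two ballean structures obtained after the paper's two separate implications); nothing is lost, since both proofs conclude identically from that point on.
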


\begin{proof} For distinct primes $p_{1},\dots, p_{n}$, we put $S_{n} = \{ p_{1}^{m_{1}} \cdots  p_{n}^{m_{n}} \mathbb{Z}: m_{i}\in\mathbb{N},\,\forall i=1,\dots, n \},$ and let $\imath(p_{1}^{m_1}\cdots p_{n}^{m_n}\Z)=(m_1,\dots,m_n)\in\N^n$. In the sequel
we denote  the $n$-tuples 
$$
(m_1,\dots,m_n), (m_1^\prime,\dots,m_n^\prime) \in \N^n
$$
by $\bar m, \bar m'$,  and the $n$-tuple  $(\max \{m_1,m_1^\prime\}, \dots,\max\{m_n, m_n^\prime\} )$ by $\max\{ \bar m,  \bar m' \}$.

Equip $\mathbb{N}^{n}$ with the taxi driver metric $d_T$, defined by
$$
d_T(\bar m,\bar m')=\Sigma_i\lvert m_i-m_i^\prime\rvert,
$$
for every pair $(\bar m,\bar m')\in\N^n\times \N^n$.

We prove below that $\imath\colon S_{n}\to \mathbb{N}^{n}$ is an  asymorphism.
%
Since $\asdim\mathbb{N}^{n}=n$, this asymorphism will provide subballeans of arbitrary finite asymptotic dimension of $\LL(\mathbb{Z})$, hence $\asdim\LL(\mathbb{Z})=\infty$.

Consider now a second metric on $\N^n$, namely the logarithmic metric $d_{\log}$, induced from $S_n$ through the bijection $\imath$. More precisely,
$$
d_{\log}(\bar m, \bar m')=d_{\LL(\Z)}(\imath^{-1}(\bar m),\imath^{-1}(\bar m')),
$$
for any pair $\bar m,\bar m'\in\N^n$.  We can assume without loss of generality that $p_1,\dots,p_n$ are greater or equal than the base of the logarithm. We claim that those two metrics induce the same ballean structures on $\N^n$.

First of all, we want to prove that
\begin{equation}
\label{eq:dlog}
d_{\log}(\bar m,\bar m')=\begin{cases}
\begin{aligned} &\sum_{i=1}^n\log(p_i)\max\{m_i-m_i^\prime,0\} &\text{if
$p_1^{m_1}\cdots p_n^{m_n}\ge p_1^{m_1^\prime}\cdots p_n^{m_n^\prime}$,
}
\\
&\sum_{i=1}^n\log(p_i)\max\{m_i^\prime-m_i,0\} &\text{otherwise.}\end{aligned}
\end{cases}
\end{equation}

For $\bar m\in\N^n$, let $A_{\bar m}=p_1^{m_1}\cdots p_n^{m_n}\Z = \imath^{-1}(\bar m)$. Then, for $\bar m ,\bar m' \in\N^n$,
$A_{\bar m}\cap A_{\bar m'} = A_{\max \{  \bar m, \bar m' \}}$, hence
\begin{equation}\label{eq:max}
\max\{\lvert A_{\bar m}:(A_{\bar m }\cap A_{\bar m '})\rvert, \lvert A_{\bar m '}:(A_{\bar m}\cap A_{\bar m '})\rvert\}\! =\!
\lvert A_{\bar s}:A_{\max\{\bar m,\bar m '\}}\rvert=\prod_{i=1}^np_i^{\max\{m_i,m_i^\prime\}-s_i}\!=\!\prod_{i=1}^np_i^{\max\{m_i-s_i,m_i^\prime-s_i\}},
\end{equation}
%
where
\begin{equation}
\label{eq:s}
\bar s=\begin{cases}
\begin{aligned}&\bar m '&\text{if 
$p_1^{m_1}\cdots p_n^{m_n}\ge p_1^{m_1^\prime}\cdots p_n^{m_n^\prime}$,
}\\
&\bar m &\text{otherwise}.
\end{aligned}
\end{cases}
\end{equation}
Hence, \eqref{eq:dlog} can be obtained by combining \eqref{eq:max} and \eqref{eq:s}.

We are left with the proof of $\BB_{d_T}=\BB_{d_{\log}}$. Fix $R\ge 0$ and consider a pair $\bar m, \bar m^\prime\in\N^n$ with $d_T(\bar m, \bar m')\le R.$ Let $K=\max\{\log(p_i): i=1,\dots,n\}.$ By our assumption, $K\ge 1$. Then
$$
d_{\log}(\bar m, \bar m')\leq\sum_{i=1}^n\log(p_i)\lvert m_i-m_i^\prime\rvert\leq RK,
$$
witnessing  that $\BB_{d_{T}}\prec\BB_{d_{\log}}$.

Conversely, let $S\ge 0$ and fix a pair $\bar m, \bar m^\prime\in\N^n$ with $d_{\log}(\bar m, \bar m')\leq S$.
We can assume without loss of generality that that $\Pi_{i\in I}p_i^{m_i}\ge\Pi_{i\in I}p_i^{m_i^\prime}$, where $I=\{1,\dots,n\}$.
Hence,  the first case occurs in \eqref{eq:dlog}.

Split $I=I_+\cup I_-$, with $I_+=\{i\in I: m_i\geq m_i^\prime\}$ and $I_-=\{i\in I: m_i<m_i^\prime\}$. Clearly,  $I_+\ne \emptyset$ due to our assumption (if $I_-= \emptyset$ we set $\prod_{i\in I_-}p_i^{m_i}=1$ below). Hence


$$
\bigg(\prod_{i\in I_+}p_i^{m_i}\bigg)\bigg(\prod_{i\in I_-}p_i^{m_i}\bigg)\ge\bigg(\prod_{i\in I_+}p_i^{m_i^\prime}\bigg)\bigg(\prod_{i\in I_-}p_i^{m_i^\prime}\bigg),
$$
equivalently,
\begin{equation}\label{eq:prod}
\prod_{i\in I_+}p_i^{m_i-m_i^\prime}\ge\prod_{i\in I_-}p_i^{m_i^\prime-m_i}.
\end{equation}
In particular, \eqref{eq:prod} implies $p_j^{m_j^\prime-m_j}\leq\Pi_{i\in I_+}p_i^{m_i-m_i^\prime}$, for every $j\in I_-$, so
\begin{equation}\label{eq:I-}
\lvert m_j-m_j^\prime\rvert=m_j^\prime-m_j=\log_{p_j}(p_j^{m_j^\prime-m_j})\leq\sum_{i\in I_+}(m_i-m_i^\prime)\log_{p_j} p_i\leq \sum_{i\in I}(\log p_i)\max\{m_i-m_i^\prime,0\} = d_{\log}(\bar m, \bar m')\leq S,
\end{equation}
since $\log_{p_j} \leq \log p_i$ as  $p_j$ is greater or equal than the base of the logarithm.
Since, for every $k\in I_+$,
\begin{equation}\label{eq:I-*}
\lvert m_k-m_k^\prime\rvert=m_k-m_k^\prime\leq (m_k-m_k^\prime) \log p_k \leq\sum_{i\in I} (\log p_i)\max\{m_i-m_i^\prime,0\} = d_{\log}(\bar m, \bar m')\leq S,
\end{equation}
the inequalities \eqref{eq:I-} and \eqref{eq:I-*} imply that $d_T(\bar m, \bar m')
\leq nS$.

Hence, $\BB_{d_{\log}}\prec\BB_{d_{T}}$. This proves the equality $\BB_{d_T}=\BB_{d_{\log}}$.
\end{proof}

In particular, Corollary \ref{coro:asdim_L_N} and Theorem \ref{4.12} imply that $\LL(\Z_{p^{\infty}})$ is not even coarsely equivalent to $\LL(\Z)$. Note the difference with Corollary \ref{coro:Z_Zp_asy}.

As we have already noticed, for $n>1$, $\LL(\mathbb{Z}^{n})$  and  $\LL(\mathbb{Z})$   are not asymorphic because $\LL(\mathbb{Z})$   has two connected components but $\LL(\mathbb{Z}^{n})$   infinitely (countably) many. It will be nice to answer the following less obvious question:

\begin{question}\label{[Question 7.]} Are $\LL(\mathbb{Z}^{n})$  and $\LL(\mathbb{Z}^{m})$ asymorphic for all distinct $n, m>1$? \end{question}

In order to characterise the abelian groups $G$ with $\asdim  \LL(G)< \infty$ we need to rule out the groups that are not finitely layered. For a group $G$ and $n\in \N$ let
$$
X _{n}=\{x\in G: o(x)=n\}
$$
(so that  $G[n] = \bigcup_{d|n} X_d$). Note that if $G$ is abelian, then $G[n]$ is a subgroup of $G$ (unlike $X_n$). Call $G$ {\em layerly finite}, if the set $X _{n}$ is finite for every  $n$ (or equivalently, when $G[n] $ is finite for each $n$).

\begin{theorem}\label{4.13} Let $G$  be an abelian group, and $p$ be a prime number. If the subgroup $G[p]$ is infinite then $\asdim\LL(G)=\infty$.
\end{theorem}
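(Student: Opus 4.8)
The plan is to produce inside $\LL(G)$ a subballean asymorphic to the Hamming space $\mathbb{H}_\omega$, and then invoke Proposition \ref{dimHamm}, according to which $\asdim\mathbb{H}_\omega=\infty$; since asymptotic dimension is monotone under passing to subballeans, this forces $\asdim\LL(G)=\infty$.

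First I would use the hypothesis: the subgroup $G[p]$, consisting of the elements killed by $p$, is an infinite group of exponent $p$, hence an infinite-dimensional vector space over $\Z/p\Z$, so it contains a linearly independent family $\{e_i:i\in\omega\}$. For every finite set $S\in[\omega]^{<\omega}$ set $A_S=\langle e_i:i\in S\rangle$, a finite subgroup of $G$ of order $p^{\lvert S\rvert}$. Distinct finite sets yield distinct subgroups, and linear independence gives $A_S\cap A_T=A_{S\cap T}$ for all $S,T$.

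The heart of the argument is the following computation. All the $A_S$ are finite, hence pairwise commensurable, so the extended metric $d'$ characterising $\LL(G)$ (see the discussion after Lemma \ref{2.3}) is everywhere finite on $Y:=\{A_S:S\in[\omega]^{<\omega}\}$. Since $\lvert A_S:A_S\cap A_T\rvert=\lvert A_S:A_{S\cap T}\rvert=p^{\lvert S\setminus T\rvert}$ and symmetrically for $T$, one gets
\[
d'(A_S,A_T)=\log p\cdot\max\{\lvert S\setminus T\rvert,\lvert T\setminus S\rvert\}.
\]
Comparing with the Hamming distance $h(S,T)=\lvert S\triangle T\rvert=\lvert S\setminus T\rvert+\lvert T\setminus S\rvert$, and using $\max\{a,b\}\le a+b\le 2\max\{a,b\}$, I obtain
\[
\tfrac{\log p}{2}\,h(S,T)\le d'(A_S,A_T)\le\log p\cdot h(S,T),
\]
so the two extended metrics are bi-Lipschitz equivalent. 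Consequently the bijection $S\mapsto A_S$ transports the ballean structure of $\mathbb{H}_\omega=[\omega]^{<\omega}$ onto the subballean $\LL(G)|_Y$, exhibiting an asymorphism $\mathbb{H}_\omega\aaa\LL(G)|_Y$.

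I do not expect a serious obstacle here: the only point that needs care is that bi-Lipschitz equivalent extended metrics induce asymorphic (indeed identical) ballean structures, which is immediate from the definition of the metric ballean, as each ball of radius $R$ for one metric is contained in a ball of radius a fixed multiple of $R$ for the other, making $id$ bornologous both ways. Putting the pieces together, $\asdim\LL(G)\ge\asdim\LL(G)|_Y=\asdim\mathbb{H}_\omega=\infty$. As an alternative, avoiding $\mathbb{H}_\omega$, one could split $\{e_i\}$ into $n$ infinite blocks and mimic Theorem \ref{4.12} to embed $\N^n$ into $\LL(G)$ for every $n$, obtaining $\asdim\LL(G)\ge n$ for all $n$; invoking Proposition \ref{dimHamm} is simply more economical.
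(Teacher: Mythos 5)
Your proof is correct and follows essentially the same route as the paper: the paper also picks an infinite direct sum $\bigoplus_\omega H_n$ of copies of $\Z_p$ inside $G[p]$, identifies the family of subgroups $H_F=\bigoplus_{n\in F}H_n$ ($F\in[\omega]^{<\omega}$) with the Hamming space $\mathbb{H}_\omega$, and concludes via Proposition \ref{dimHamm} and monotonicity of $\asdim$ under subballeans. Your explicit computation $d'(A_S,A_T)=\log p\cdot\max\{\lvert S\setminus T\rvert,\lvert T\setminus S\rvert\}$ and the bi-Lipschitz comparison with the Hamming metric simply fill in the asymorphism verification that the paper asserts without detail.
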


\begin{proof} We take a subgroup $H=\bigoplus_{\omega} H_{n}$  of  $G$ which is a direct sum  of $\omega$ copies $H_n$ of $\mathbb{Z}_{p}$,  denote by $\mathcal S$  the set of all subgroups of $H$ of the from $H_F:= \bigoplus_{n\in F} H_{n}$, where $F$  is a finite subset of $\omega,\bigoplus_{n\in\emptyset} H_{n}=\{0\}$.  Then  the correspondence $H_F \mapsto F$ defines an asymorphism between
$\mathcal  S$ and the Hamming space $\mathbb{H}_{\omega}$.  According to Proposition \ref{dimHamm}, $\asdim \ \mathbb{H}_{\omega}=\infty$. Therefore, $\asdim \  \mathcal S=\infty$. This yields $\asdim\LL(G)=\infty$.
\end{proof}

\begin{corollary}\label{New:Corollary}
Let $G$ be an abelian group with $\asdim\LL(G)< \infty$. Then $G$ is torsion and  layerly finite.
\end{corollary}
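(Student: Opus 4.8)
The plan is to deduce Corollary \ref{New:Corollary} from Theorem \ref{4.13} by contraposition, using the latter as the key engine. I would argue that if $G$ fails to be either torsion or layerly finite, then $G[p]$ is infinite for some prime $p$, whence $\asdim\LL(G)=\infty$ by Theorem \ref{4.13}, contradicting the hypothesis $\asdim\LL(G)<\infty$. So the whole task reduces to the purely algebraic implication: \emph{if $\asdim\LL(G)<\infty$ then $G[p]$ is finite for every prime $p$}, and then showing this forces $G$ to be torsion and layerly finite.

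First I would observe that Theorem \ref{4.13} immediately gives that $G[p]$ must be finite for every prime $p$, since an infinite $G[p]$ would yield $\asdim\LL(G)=\infty$. The real content is therefore the deduction of torsion and layerly finiteness from the single-prime finiteness of all the $G[p]$. For the torsion claim, I would argue that if $G$ had an element $x$ of infinite order, then $\langle x\rangle\cong\Z$ embeds in $G$, and one should be able to locate a copy of $\LL(\Z)$ as (the relevant component of) a subballean of $\LL(G)$; since $\asdim\LL(\Z)=\infty$ by Theorem \ref{4.12} and asymptotic dimension is monotone under passage to subballeans (as recalled in Section \ref{sec:basic}), this would again contradict $\asdim\LL(G)<\infty$. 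Alternatively, and more cleanly, infinite order of $x$ together with the already-established finiteness of all $G[p]$ can be played off against each other to produce an infinite $G[p]$, closing the torsion case directly.

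For layerly finiteness, having $G$ torsion, I would reduce $G[n]$ being finite for every $n$ to $G[p]$ being finite for every prime $p$. The clean route is to note that $G[p^k]$ is controlled by $G[p]$: for an abelian $p$-group, $G[p]$ finite forces every $G[p^k]$ finite (by induction, using that multiplication by $p$ maps $G[p^{k+1}]$ into $G[p^k]$ with kernel $G[p]$, so $|G[p^{k+1}]|\le |G[p]|\cdot|G[p^k]|$). Then for a general $n=p_1^{a_1}\cdots p_r^{a_r}$ one uses the primary decomposition $G[n]\cong\bigoplus_i G[p_i^{a_i}]$, so $G[n]$ is finite as a finite product of finite groups. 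Since $X_n\subseteq G[n]$, finiteness of $G[n]$ gives finiteness of every $X_n$, i.e.\ layerly finiteness.

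The main obstacle I anticipate is not any single step but making the torsion argument airtight: one must be sure that an element of infinite order genuinely produces a high-dimensional (indeed infinite-dimensional) subballean of $\LL(G)$, i.e.\ that the embedding $\langle x\rangle\cong\Z\hookrightarrow G$ induces a well-behaved isometric (or at least asymorphic) copy of $\LL(\Z)$ inside $\LL(G)$ compatible with the extended metric $d'$ and the commensurability components described earlier. The subtlety is that $\LL(G)$ is built from \emph{all} subgroups of $G$, so one must check that the relevant indices $|A:A\cap B|$ computed inside $\langle x\rangle$ agree (up to the coarse structure) with those computed in $G$; Lemma \ref{2.3} and the characterisation of $\LL(G)$ via $d'$ are exactly the tools that should settle this. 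Once the monotonicity of $\asdim$ under subballeans is invoked, the contradiction with $\asdim\LL(\Z)=\infty$ from Theorem \ref{4.12} completes the argument.
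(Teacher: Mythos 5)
Your proposal is correct and follows essentially the same route as the paper's own (very terse) proof: torsion comes from Theorem \ref{4.12} via the observation that for an element $x$ of infinite order the metric $d'$ of $\LL(G)$ restricted to $L(\langle x\rangle)$ is intrinsic (the indices $\lvert A:A\cap B\rvert$ do not depend on the ambient group), so $\LL(\Z)$ sits as a subballean of $\LL(G)$ and monotonicity of $\asdim$ gives the contradiction; layerly finiteness comes from Theorem \ref{4.13} together with the standard reduction from finiteness of all $G[p]$ to finiteness of all $G[n]$, a step the paper leaves implicit and which you fill in correctly. One warning: your ``alternative, and more cleanly'' route for the torsion case would fail --- an element of infinite order cannot be played off against finiteness of the $G[p]$'s to produce an infinite $G[p]$, since $\Z$ itself has an element of infinite order while $\Z[p]=\{0\}$ for every prime $p$; only the subballean argument works, and it is the one the paper intends.
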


\begin{proof} By $\asdim\LL(G) <\infty$ and by Theorem \ref{4.12},  $G$ is a torsion group. By Theorem \ref{4.13},  $G$ is layerly finite.
\end{proof}

We can characterise now the abelian groups $G$ such that $\asdim\LL(G)=0$ as the reduced torsion finitely layered
abelian groups. For a  prime $p$ we denote by $S_p$ the {\em Sylow $p$-subgroup} of $G$, i.e.,  is the maximal $p$-subgroup of $G$.

\begin{theorem}\label{[4.14]}  For an abelian group $G$, $\asdim \LL(G)=0$ if and only if $G$ is a torsion group and for every prime $p$ the Sylow $p$-subgroup $S_{p}$ of $G$ is finite.
\end{theorem}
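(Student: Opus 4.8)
The plan is to prove the two implications separately, exploiting the primary decomposition $G=\bigoplus_p S_p$ of a torsion abelian group together with the multiplicativity of subgroup indices along the primary components. Two structural facts will be used throughout: every subgroup splits as $H=\bigoplus_p H_p$ with $H_p=H\cap S_p\le S_p$, and $|H:H\cap K|=\prod_p|H_p:H_p\cap K_p|$, each factor being finite once the $S_p$ are finite.

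For the necessity of the condition, I would start from $\asdim\LL(G)=0<\infty$ and invoke Corollary \ref{New:Corollary} to obtain that $G$ is torsion; it then remains to rule out an infinite Sylow subgroup $S_p$. Assuming $S_p$ infinite, I would build a strictly increasing chain of finite subgroups $\{0\}=C_0\subsetneq C_1\subsetneq\cdots\subseteq S_p$ with $|C_{n+1}:C_n|=p$ for every $n$: at each stage $C_n\ne S_p$, so the nontrivial $p$-group $S_p/C_n$ contains an element of order $p$, which lifts to the desired $C_{n+1}$. For nested subgroups one computes $d^\prime(C_i,C_j)=|i-j|\log p$, so $\{C_n:n\in\N\}$ is a subballean of $\LL(G)$ isometric to $(\log p)\N$, whose asymptotic dimension equals $\asdim\N=1$. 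Since asymptotic dimension does not increase on subballeans, this forces $\asdim\LL(G)\ge 1$, a contradiction; hence every $S_p$ is finite. (Alternatively, one may extract a copy of $\Z_{p^\infty}$ as a direct summand and cite Proposition \ref{prop:Zp_N} and Corollary \ref{coro:asdim_L_N}.)

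For the sufficiency, assume $G$ is torsion with all $S_p$ finite and prove that $\LL(G)$ is cellular, equivalently $\asdim\LL(G)=0$. Fix a radius $R$ and set $N=b^R$, where $b>1$ is the base of the logarithm. The decisive point is that a single ball of radius $R$ cannot alter the $p$-component for any prime $p>N$: if $d^\prime(K^\prime,K)\le R$ and $p>N$, then $|K^\prime_p:K^\prime_p\cap K_p|$ is a power of $p$ not exceeding $|K^\prime:K^\prime\cap K|\le N<p$, hence it equals $1$, and symmetrically, so $K_p=K^\prime_p$. By induction this persists through $B^\square(H,R)$, so every $K\in B^\square(H,R)$ satisfies $K_p=H_p$ for all $p>N$. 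Consequently
$$
d^\prime(H,K)=\log\max\Big\{\prod_{p\le N}|H_p:H_p\cap K_p|,\ \prod_{p\le N}|K_p:H_p\cap K_p|\Big\}\le\log\prod_{p\le N}|S_p|=:R^\prime,
$$
where $R^\prime$ is a finite constant depending only on $R$, since there are finitely many primes $p\le N$ and each $S_p$ is finite. Thus $B^\square(H,R)\subseteq B(H,R^\prime)$ for every $H$, which is exactly the cellularity of $\LL(G)$, and therefore $\asdim\LL(G)=0$.

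I expect the sufficiency direction to be the main obstacle, and within it the crucial observation that a single radius $R$ only ``activates'' the finitely many primes below $b^R$. This is what turns the a priori coupled metric $d^\prime$ into something that behaves like a coproduct of the bounded balleans $\LL(S_p)$, and it is precisely what allows the uniform radius $R^\prime$ to be found; the finiteness of each $S_p$ enters exactly here, to bound the contribution of the active primes uniformly in $H$.
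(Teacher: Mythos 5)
Your proof is correct, and its main (sufficiency) direction is essentially the paper's own argument: the paper fixes $n$, sets $G_n=\bigoplus\{S_p:\log p>n\}$, observes that $d_{\LL(G)}(A,B)\leq n$ forces $A\cap G_n=B\cap G_n$ (your observation that only the finitely many primes with $\log p\leq R$ are ``active''), and concludes $B^{\square}(A,n)\subseteq B(A,m)$ with $m=\sum\{\log\lvert S_p\rvert:\log p\leq n\}$, which is exactly your $R^\prime$; you merely supply the index-multiplicativity details along the primary decomposition that the paper leaves implicit. Where you genuinely deviate is the necessity direction. The paper invokes Corollary \ref{New:Corollary} to get $G$ torsion \emph{and layerly finite}, and it needs layerly finiteness to assert that an infinite $S_p$ contains a copy of $\Z_{p^\infty}$, after which it cites $\asdim\LL(\Z_{p^\infty})=1$; without layerly finiteness that embedding claim is false (e.g.\ $\bigoplus_{\omega}\Z(p)$ contains no copy of $\Z_{p^\infty}$). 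Your chain $C_0\subsetneq C_1\subsetneq\cdots$ with $\lvert C_{n+1}:C_n\rvert=p$ manufactures a subballean isometric to $(\log p)\N$ directly from the bare assumption that $S_p$ is infinite, so you bypass layerly finiteness entirely and rely only on monotonicity of $\asdim$ under subballeans. Both routes are valid; yours is slightly more self-contained at that point, while the paper's reuses its earlier structural results (Theorem \ref{4.13}, Proposition \ref{prop:Zp_N}, Corollary \ref{coro:asdim_L_N}) at the cost of the layerly-finiteness detour.
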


\begin{proof} By Theorem \ref{4.12} and Corollary \ref{New:Corollary}, $G$ is a torsion layerly finite group. If some $S_{p}$ is infinite then $S_{p}$ has a subgroup isomorphic to $\mathbb{Z} _{p^{\infty}}$ but $\asdim\LL( \mathbb{Z} _{p^{\infty}})=1$.


We  assume that each $S_{p}$ is finite and show that $\LL(G)$ is cellular. Let $G=\bigoplus  _{p\in\pi(G)}  S_{p}$. We  take an arbitrary $n\in \mathbb{N}$ and put $G_n=\bigoplus\{S_{p} : p\in \pi(G),\,\log p>n\}$. If $A,B\in  L(G)$ and $d _{\LL(G)} (A, B)\leq  n$ then
$A\cap G_n = B\cap G_n$. It follows that $B^{\Box}(A, n)\subseteq  B(A, m)$, where $m=\sum\{ \log |S_{p}|: p\in \pi(G), \log \ p\leq n\} $.
\end{proof}


Now we show that for every $n\in \N$ one can easily build a (divisible) abelian group $G$ with
$$
\asdim\LL(G) = n.
$$

\begin{example} (a) For distinct primes $p_{1},\ldots   , p_{n}$ consider the group $G= \mathbb{Z}_{p_{1}^{\infty}} \oplus  \ldots \oplus\mathbb{Z}_{p_{n}^{\infty}}$.
Then
$$
\LL(G)\aaa\prod_{i=1}^n\LL(\Z_{p_i^{\infty}})=\coprod_{i=0}^n\bigg(\coprod_{j=1}^{n\choose i}\N^{n-i}\bigg),
$$
so in particular $\asdim \LL(G) = n$. For a proof one has to use the fact that the  lattice $L(G)$ is isomorphic to the direct
product of the lattices  $L(\mathbb{Z}_{p_{1}^{\infty}}) \times  \ldots \times L(\mathbb{Z}_{p_{n}^{\infty}})$ since every subgroup $H$ of $G$ has the form
$$
H = \bigoplus_{i=1}^n H_{p_i}, \mbox{ where $H_{p_i}$ is a subgroup of $\mathbb{Z}_{p_{i}^{\infty}}$}.
$$

(b) More generally, for a set $\pi$ of primes let $G_\pi=\bigoplus_{p\in \pi} \mathbb{Z}_{p^{\infty}}$. Then
$\asdim G_\pi =|\pi|$. Indeed, for finite $\pi$ this follows from (a). Otherwise, consider subsets $\pi_n\subseteq \pi$
with $|\pi_n |=n$ and apply again (a) to the subgroup   $G_{\pi_n}$ to deduce $\asdim G_{\pi_n} =n$ and conclude
$\asdim G_{\pi} = \infty$.
\end{example}

\begin{remark}\label{rem:4.15}
Let $G$ be an abelian group, $n\in \mathbb{N}$. If $\asdim\LL(G) =n$ then $G$ is torsion and there exist distinct primes $p_{1},\dots,p_{m}$, $m\leq n$, a layerly finite subgroup $G_1$ of $G$ which is a direct sum of cyclic subgroups, such that
\begin{equation}\label{last:eq}
G\simeq \mathbb{Z}_{p_{1}^{\infty}} \oplus  \cdots \oplus\mathbb{Z}_{p_{m}^{\infty}} \oplus G_{1}.
\end{equation}

Indeed, by Corollary \ref{New:Corollary}, $G$ is torsion and finitely layered. Hence,  its maximal divisible subgroup $d(G) =  \mathbb{Z}_{p_{1}^{\infty}} \oplus  \cdots \oplus\mathbb{Z}_{p_{m}^{\infty}}$ has $r(G) = m\leq n$. So $G$ splits as in (\ref{last:eq}). Furthermore,
letting $G_2 = \mathbb{Z}_{p_{1}^{\infty}} \oplus  \cdots \oplus\mathbb{Z}_{p_{m}^{\infty}}$, one may have $\pi(G_{1})\cap \pi(G_{2})\ne\emptyset $,
but it is possible to split $G_1 = G_1^* \oplus F$, where $F$ is a finite group, such that, with $G_2^* := G_2 \oplus F$, one has
$$
G = G_1^* \oplus G_2^* \ ,  \  \  G_2^* = \mathbb{Z}_{p_{1}^{\infty}} \oplus  \cdots \oplus\mathbb{Z}_{p_{m}^{\infty}} \oplus F \  \  \mbox{ and } \  \  \  \pi(G_{1}^*)\cap \pi(G_{2}^*)=\emptyset.
$$
\end{remark}


More precise results depend on the following:

\begin{problem} Compute $\asdim\LL(\mathbb{Z}_{p^{\infty}}  \oplus \mathbb{Z}_{p^{\infty}} )$.
\end{problem}

Note that $\LL(\mathbb{Z}_{p^{\infty}}  \oplus \mathbb{Z}_{p^{\infty}})$ contains, as a subspace, the family $\mathcal S\subseteq\QQ_{L(\Z_{p^{\infty}}\oplus\Z_{p^{\infty}})}(\{0\})$ of all proper subgroups of the form $H = H_1\oplus H_2$, where $H_i$ is a proper subgroup of $\mathbb{Z}_{p^{\infty}}$ for $i = 1,2$. Since $\mathcal S\aaa\N^2$,
\begin{equation}\label{eq:asdim_S}\asdim\LL(\mathbb{Z}_{p^{\infty}}\oplus \mathbb{Z}_{p^{\infty}})\ge\asdim\mathcal S=\asdim\N^2=2.
\end{equation}

We do not state in Remark \ref{rem:4.15} that the converse implication is true. More precisely, if $G$ is as in Remark \ref{rem:4.15}
with primes  $p_{1},\dots,p_{m}$ not necessarily distinct,  we cannot claim that $\asdim\LL(G) =n$ is finite with $ n \geq m$.  In case $\asdim \LL(\mathbb{Z}_{p^{\infty}}\oplus \mathbb{Z}_{p^{\infty}})=\infty$ occurs for all primes $p$ (see Remark \ref{last:remark}),
we can claim that $\asdim\LL(G) =n$ entails that  the primes $p_{1},\dots,p_{m}$ are pairwise distinct (and so, $m = n$).

\begin{proposition}\label{voluntary}
Let $G$ be a $p$-group. Consider the subballean of $\LL(G)$ whose support is the family $C(G)$ of all cyclic subgroups of $G$. Then $\asdim C(G)\leq 1$. Moreover, $\asdim C(G)=0$ if and only if $G$ has finite exponent.
\end{proposition}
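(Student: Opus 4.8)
The plan is to first rewrite the metric governing $\LL(G)$ on the subset $C(G)$ purely in terms of the orders of the cyclic subgroups involved, then to realise $(C(G),d')$ as (a rescaling of) the vertex set of a tree equipped with its graph metric, and finally to invoke the classical fact that trees have asymptotic dimension at most $1$. Since $G$ is a $p$-group, every cyclic subgroup $A$ is finite of order $p^{e(A)}$ for a unique $e(A)\in\N$, and any two cyclic subgroups are finite, hence commensurable; thus $C(G)$ lies in a single connected component of $\LL(G)$ and $d'$ is finite throughout. For cyclic $A,B$ the intersection $A\cap B$ is again cyclic and $\lvert A:A\cap B\rvert=p^{\,e(A)-e(A\cap B)}$, so by the characterisation of $\LL(G)$ via the extended metric $d'$ recalled before this subsection,
\begin{equation}\label{pl:dmetric}
d'(A,B)=\bigl(\max\{e(A),e(B)\}-e(A\cap B)\bigr)\log p.
\end{equation}
From \eqref{pl:dmetric} the easy half of the ``moreover'' is immediate: if $G$ has finite exponent $p^m$ then $e(A)\le m$ for every cyclic $A$, so $d'\le m\log p$ is bounded, whence $C(G)$ is a bounded ballean and $\asdim C(G)=0$.

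For the bound $\asdim C(G)\le 1$ I would build a tree $T$ on the vertex set $C(G)$ by joining each nontrivial cyclic subgroup $A$ to its unique maximal subgroup $A^{(1)}$, the subgroup of index $p$, taking the trivial subgroup $\{e\}$ as root. In a cyclic $p$-group the subgroups form a chain with exactly one of each order, so every vertex has a single parent and $T$ is a rooted tree in which the depth of $A$ is $e(A)$ and the ancestors of $A$ are precisely the subgroups of $A$. Consequently the deepest common ancestor of $A$ and $B$ is their largest common subgroup $A\cap B$, and the graph metric of $T$ is
$$
d_T(A,B)=\bigl(e(A)-e(A\cap B)\bigr)+\bigl(e(B)-e(A\cap B)\bigr).
$$
Setting $u=e(A)-e(A\cap B)$ and $v=e(B)-e(A\cap B)$, comparison with \eqref{pl:dmetric} gives $\max\{u,v\}\le u+v\le 2\max\{u,v\}$, i.e. $\tfrac{1}{\log p}\,d'\le d_T\le \tfrac{2}{\log p}\,d'$. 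Hence the identity is a bi-Lipschitz equivalence, and therefore an asymorphism, between the metric ballean $(C(G),d')$ and the tree $T$ with its graph metric. As $\asdim$ is a coarse invariant and trees are well known to satisfy $\asdim\le 1$ (see \cite{DraBel}), we conclude $\asdim C(G)\le 1$.

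It remains to show that infinite exponent forces $\asdim C(G)\ge 1$; since cellular balleans are exactly those of asymptotic dimension $0$, it suffices to prove that $C(G)$ is not cellular. The key observation is that $C(G)$ is connected at the single scale $\log p$: each cyclic $A$ is joined to $\{e\}$ by the chain $A\supset A^{(1)}\supset\cdots\supset\{e\}$, whose consecutive terms are at $d'$-distance $\log p$, so any two cyclic subgroups are linked by a $\log p$-chain passing through $\{e\}$. Therefore $B^{\square}(\{e\},\log p)=C(G)$. If $G$ has infinite exponent there are cyclic subgroups of arbitrarily large order, so by \eqref{pl:dmetric} $d'(\{e\},A)=e(A)\log p$ is unbounded and $C(G)$ is unbounded; cellularity would force $B^{\square}(\{e\},\log p)=C(G)$ to sit inside a single ball $B(\{e\},\beta)$, i.e. to be bounded, a contradiction. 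Together with the upper bound this gives $\asdim C(G)=0$ if and only if $G$ has finite exponent.

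The main obstacle is getting the tree comparison exactly right: one must check that the parent map (passing to the unique index-$p$ subgroup) is well defined and single-valued, that the ancestor chain of a vertex coincides with its lattice of subgroups so that the deepest common ancestor is genuinely $A\cap B$, and that the two-sided Lipschitz estimate between $d'$ and $d_T$ is uniform in $A,B$. Once \eqref{pl:dmetric} and the identification of $A\cap B$ with the meet in $T$ are established, everything else is routine; alternatively, one could bypass the citation and write down directly, for each radius, a uniformly bounded two-coloured cover on $T$ by downward cones rooted at the depths that are multiples of the scale, but the tree reduction is cleaner.
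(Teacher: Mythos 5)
Your proof is correct and takes essentially the same route as the paper: the paper builds exactly the same graph on $C(G)$ (edges joining cyclic subgroups at index $p$), shows it is a tree rooted at the trivial subgroup, and invokes the fact that trees have asymptotic dimension at most $1$, concluding with $\asdim C(G)=0$ iff the tree is bounded iff $G$ has finite exponent. If anything, your write-up is more complete at the two points the paper leaves terse: you give the explicit bi-Lipschitz comparison where the paper only says the tree is ``trivially asymorphic'' to $C(G)$, and your cellularity argument (chains at scale $\log p$ make $B^{\square}(\{e\},\log p)=C(G)$, so unboundedness kills asymptotic dimension $0$) justifies the paper's unproved claim that $\asdim C(G)=0$ forces boundedness.
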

\begin{proof}
We claim that $C(G)$ is asymorphic to a tree, which implies that $\asdim C(G)\leq 1$ (see \cite[Proosition 9.8]{Roe}). Define a graph $T$ having $C(G)$ as set of vertices and, for $X,Y\in C(G)$, the pair $\{X,Y\}$ is an edge if and only if $X\leq Y$ and $\lvert Y:X\rvert=p$, or $Y\leq X$ and $\lvert X:Y\rvert=p$. Then $T$ is trivially asymorphic to $C(G)$. Obviously,
$(T,\leq)$ is also a partially ordered set, where the order is defined by the inclusion of subgroups. We want to show that $T$ is actually a tree. Consider $X,Y,Z\in C(G)$ such that $Y,Z\leq X$. Let $X=\langle x\rangle$. Since $Y,Z\in C(G)$, $Y=\langle x^{p^y}\rangle$, and $Z=\langle x^{p^z}\rangle$, for some $y,z\in\N$. If $z\leq y$, then $Y\leq Z$ since $x^{p^y}=(x^{p^z})^{p^{y-z}}$. Similarly, if $y\leq z$, then $Z\leq Y$. Since the set $T_X$ of vertices below this fixed vertex $X$ is finite, hence it is well-ordered. This shows that the partially ordered set $(T,\leq)$ is a tree with root the trivial subgroup of $G$ and height equal to the (logarithm of the) exponent of $G$, hence at most $\omega$.

Finally, $\asdim C(G)=0$ if and only if $T$ is bounded and this is equivalent to $G$ having finite exponent.
\end{proof}

For $G=\Z_{p^{\infty}}\oplus\Z_{p^{\infty}}$ we proved $\asdim C(G)=1$ in Proposition \ref{voluntary}. However, $\asdim\QQ_{\LL(G)}(\{0\})\ge 2$, by \eqref{eq:asdim_S}.

Let us conclude our discussion about Remark \ref{rem:4.15} with a final remark
towards an answer to the question whether $\asdim \LL(\mathbb{Z}_{p^{\infty}}  \bigoplus \mathbb{Z}_{p^{\infty}}  )=\infty$.

\begin{remark}\label{last:remark}
Unlike the group  $\mathbb{Z}_{p^{\infty}}  \oplus \mathbb{Z}_{q^{\infty}}$, with primes $p\ne q$, the group $\mathbb{Z}_{p^{\infty}}  \oplus \mathbb{Z}_{p^{\infty}}$ has $\mathfrak c$ many subgroups, actually $\mathfrak c$ many divisible subgroups isomorphic to $\mathbb{Z}_{p^{\infty}}$.
%
%

 One can easily see that $G = \mathbb{Z}_{p^{\infty}}\oplus \mathbb{Z}_{p^{\infty}}$ has three types of subgroups:
\begin{enumerate}[(i)]
\item finite subgroups;
\item infinite proper divisible subgroups, they are all isomorphic to $\mathbb{Z}_{p^{\infty}}$;
\item infinite proper non-divisible subgroups, they are all isomorphic to $\Z(p^n) \oplus \mathbb{Z}_{p^{\infty}}$.
\end{enumerate}

 There are countably many finite subgroups and $\mathfrak c$ many subgroups of each type (ii) and (iii). \\
 We conjecture  that $\asdim \LL(\mathbb{Z}_{p^{\infty}}  \oplus \mathbb{Z}_{p^{\infty}} ) =\infty$.
\end{remark}

\begin{example} \label{ex:Tarskii} A {\em Tarskii monster of exponent $p$}, where $p$ is a prime, is an infinite countable group whose proper subgroups are cyclic and have order $p$. Olshanskii \cite{Ols} built Tarskii monsters for every prime $p > 10^{75}$.

Let $G$ be a Tarskii monster of exponent $p$, where $p$ is a suitable prime. Since every proper subgroup is finite, then $L(G)=\{G\}\sqcup\QQ_{L(G)}(\{e\})$, where $L(G)$ can be endowed both with the subgroup exponential hyperballean structure and with the subgroup logarithmic hyperballean structure.

\begin{enumerate}[(a)]
\item First of all, we focus on the subgroup exponential ballean $\mathcal L(G)$. Fact \ref{fact:finite_balls} implies that every ball centered in a proper subgroup of $G$ is finite. We are not aware whether $\mathcal L(G)$ is thin.
Since $\mathcal L(G)=\{G\}\sqcup \QQ_{L(G)}(\{e\})$, if $\mathcal L(G)$ is thin, then $\QQ_{L(G)}(\{e\})=\BB_{\II}$, where $\II=[\QQ_{L(G)}(\{e\})]^{<\omega}$.

\item We now consider $\LL(G)$. The definition of $G$ implies that the ball centred at the identity of radius $\log p$ contains all proper subgroups of the group, which are infinitely many. Hence, $\LL(G)=\{G\}\sqcup V$, where $V$ is the family of all proper subgroups of $G$, and $V$ is bounded. In particular, $\LL(G)$ is thin and $0$-dimensional.
\end{enumerate}
\end{example}

\begin{remark}\label{rem:inclusions}
For every group $G$, there is a natural map $i\colon G\to L(G)$ that sends every element $g\in G$ in the subgroup $\langle g\rangle$.  (One may consider also the co-restriction $G\to C(G)$ of $i$, where $C(G)$ is the family of all cyclic subgroups of $G$.)
The cardinalities of its fibres have a uniform bound (i.e., $i$ has uniformly bounded fibres) if and only if there is an upper bound for the size of
of all finite cyclic subgroups of $G$ (e.g., the groups of finite  exponent as well as torsion-free groups have this property).
 Hence, one might think that $i$ could be a coarse embedding if $L(G)$ is endowed with a subgroup hyperballean structure. For example, if $G$ is finite, then $i$ is trivially a coarse equivalence. However, if $G$ is infinite this may fail even in simple cases. For $G=\Z$ the map $i\colon G\to L(G)$ is surjective, with $\asdim G=1$, yet $\asdim\mathcal L(G)=0$ and $\asdim\LL(G)=\infty$; hence $i$ is not a coarse equivalence in both cases.

If $G$ is infinite and has finite exponent $n$, then $i\colon G\to\LL(G)$ is not proper. In fact, every cyclic subgroup belongs to the ball in $\LL(G)$ centred in $\{e\}$ with radius $\log n$ (see also Example \ref{ex:Tarskii}) and those subgroups are infinitely many. Hence, $i^{-1}(B_{\LL}(\{e\},\log n))$ is unbounded in $\BB_G$, i.e., infinite.
\end{remark}

\subsection{$G\mbox{-}\exp\BB_\II$}\label{sub:G-space}

\begin{definition}
Let $X$ be a $G$-space with action $G\times X\longrightarrow X$, $(g, x)\longmapsto  gx$, and let $\mathcal{I}$ be a group ideal on $G$. The ballean $\mathcal{B}(G, X, \mathcal{I})$ is defined as $(X , \mathcal{I}, B)$, where  $B(x, A)= Ax \cup\{x\}$ for all $x\in X$,  $A\in \mathcal{I}$.
\end{definition}

By \cite[Theorem 1]{b2}, every ballean $\mathcal{B}$  with support $X$ is asymorphic to  $\mathcal{B}(G, X, \mathcal{I})$  under appropriate choice of $G$  as a subgroup of the group $S_{X}$ of all permutations of $X$  and a group ideal $\mathcal{I}$.

Note that the finitary ballean $\mathcal{B} _{G}$ on a group $G$ is precisely $\mathcal{B} (G,G, [G]^{<\omega})$ with the action of $G$ on $G$  by left translations.

For $\mathcal{B} = \mathcal{B} (G, X, \mathcal{I})$, we introduce a $G$-{\it hyperballean }  $G\mbox{-}\exp \mathcal{B}$ as $(\exp X, \mathcal{I},G\mbox{-}\exp B)$, where
$$
G\mbox{-}\exp B(Y,A)= \{Y\}\cup \{gY: g\in A\},
$$
for every $Y\in\exp X$ and every $A\in\II$. Since $G$ acts by bijections, if $Y$ and $Z$ are two non-empty subsets of $X$, then
\begin{equation}\label{eq:card}
\lvert Y\rvert=\lvert Z\rvert\text{ if there exists $\{g\}\in\II$ such that $Z\in G\mbox{-}\exp B(Y,\{g\})$}.
\end{equation}

\begin{proposition}
For every ballean $\BB=\BB(G,X,\II)$, $G\mbox{-}\exp \mathcal{B}\prec\exp\mathcal{B}$. Moreover, the following properties are equivalent:
\begin{enumerate}[(a)]
\item $G\mbox{-}\exp\mathcal{B}=\exp\mathcal{B}$;
\item each ball in $\exp\mathcal{B}$ around a singleton consists of sigletons;
\item $\BB$ is discrete.
\end{enumerate}
\end{proposition}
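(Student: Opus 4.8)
The plan is to unwind the three relevant ball operators for the special ballean $\BB=\BB(G,X,\II)$, whose balls are $B(x,A)=Ax\cup\{x\}$, so that $B(Y,A)=AY\cup Y$ for every $Y\subseteq X$, and hence
$$
\exp B(Y,A)=\{Z\in\exp X: Z\subseteq AY\cup Y,\ Y\subseteq AZ\cup Z\},\qquad G\mbox{-}\exp B(Y,A)=\{Y\}\cup\{gY:g\in A\}.
$$
For the first assertion $G\mbox{-}\exp\BB\prec\exp\BB$, I would fix a radius $A\in\II$ and show that $A':=A\cup A^{-1}$, which lies in $\II$ because $\II$ is a group ideal (closed under inversion and finite unions), witnesses bornologousness of the identity. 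Given $Z\in G\mbox{-}\exp B(Y,A)$, either $Z=Y$, which trivially lies in $\exp B(Y,A')$, or $Z=gY$ with $g\in A$; in the latter case $Z=gY\subseteq A'Y$ and $Y=g^{-1}Z\subseteq A'Z$ since $g^{-1}\in A^{-1}\subseteq A'$, so $Z\in\exp B(Y,A')$. This step is routine bookkeeping with the group-ideal axioms.

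For the equivalences I would run the cycle (c)$\Rightarrow$(a)$\Rightarrow$(b)$\Rightarrow$(c), after first recording that discreteness of $\BB$ (i.e. $\BB=\BB_{dis}$) means exactly $B(x,A)=\{x\}$ for all $x$ and all $A\in\II$, equivalently $gx=x$ whenever $g$ occurs in some radius. Granting (c), every $gY=Y$ and every $B(Y,A)=Y$, so both $\exp B(Y,A)$ and $G\mbox{-}\exp B(Y,A)$ collapse to $\{Y\}$; thus each hyperballean equals the discrete ballean on $\exp X$, giving (a). For (a)$\Rightarrow$(b) I would invoke the nontrivial half $\exp\BB\prec G\mbox{-}\exp\BB$ contained in (a): choosing $A'$ with $\exp B(\{x\},A)\subseteq G\mbox{-}\exp B(\{x\},A')$ and noting that the right-hand ball consists only of the singletons $\{x\}$ and $\{gx\}$, every member of $\exp B(\{x\},A)$ is forced to be a singleton, which is precisely (b).

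The crux is (b)$\Rightarrow$(c), and I expect it to be the only real content of the proof. It rests on the observation that the whole ball $B(x,A)=Ax\cup\{x\}$, regarded as a single subset of $X$, is itself an element of $\exp B(\{x\},A)$: indeed $B(x,A)\subseteq B(x,A)$ and $x\in B(x,A)\subseteq B(B(x,A),A)$, so both defining conditions hold. Condition (b) then forces this element to be a singleton, and since it contains $x$ it must equal $\{x\}$; hence $Ax\cup\{x\}=\{x\}$, i.e. $B(x,A)=\{x\}$, for every $x$ and every $A\in\II$. This is exactly discreteness of $\BB$, closing the cycle. Everything outside this one ``the ball is a neighbour of its own centre'' trick is elementary manipulation of the definitions.
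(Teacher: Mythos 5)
Your proof is correct and takes essentially the same route as the paper: the same symmetrization $A\cup A^{-1}$ (the paper assumes $A=A^{-1}$ without loss of generality) for $G\mbox{-}\exp\BB\prec\exp\BB$, and the same cycle of implications (a)$\Rightarrow$(b)$\Rightarrow$(c)$\Rightarrow$(a), with (a)$\Rightarrow$(b) derived from the fact that balls of $G\mbox{-}\exp\BB$ around a singleton consist only of singletons (the paper phrases this via its cardinality observation \eqref{eq:card}). The only difference is presentational: the paper dismisses (b)$\Rightarrow$(c) and (c)$\Rightarrow$(a) as trivial, whereas you spell them out, your ``the ball $B(x,A)$ is itself an element of $\exp B(\{x\},A)$'' observation being exactly what makes (b)$\Rightarrow$(c) immediate.
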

\begin{proof}
Fix a radius $A\in\II$ and assume, without loss of generality, that it satisfies $A=A^{-1}$. Then, for every non-empty subset $Y$ of $X$, if $Z\in G\mbox{-}\exp B(Y,\alpha)$, then $Z=gY$ for some $g\in A$. Thus
$$
Z=gY\subseteq AY\quad\mbox{and}\quad Y=g^{-1}Z\subseteq AZ,
$$
which implies that $Z\in\exp B(Y,A)$.

The implications (b)$\to$(c)$\to$(a) are trivial. 
Suppose now that $G\mbox{-}\exp\mathcal{B}=\exp\mathcal{B}$. Then, for every $\{x\}\subseteq X$ and every $A\in\II$, $\lvert Y\rvert=1$, provided that $Y\in\exp B(\{x\},A)$, because of \eqref{eq:card}.
\end{proof}

\begin{proposition}
\label{prop:dsc_expG}
For an infinite group $G$, $\dsc(G\mbox{-}\exp\BB_G)=\dsc(\exp\BB_G)=2 ^{\lvert G\rvert}$.
\end{proposition}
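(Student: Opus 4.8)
The plan is to trap both quantities at $2^{|G|}$ from above and below, working with $\kappa:=|G|$. Both balleans have support $\exp(G)=\mathcal P(G)\setminus\{\emptyset\}$, which for infinite $G$ has cardinality $2^\kappa$; since the number of connected components of any ballean is bounded by the cardinality of its support, we get for free $\dsc(G\mbox{-}\exp\BB_G)\le 2^\kappa$ and $\dsc(\exp\BB_G)\le 2^\kappa$. (The previous proposition, giving $G\mbox{-}\exp\BB_G\prec\exp\BB_G$, together with the monotonicity of $\dsc$ from Section~\ref{sec:basic}, also yields $\dsc(\exp\BB_G)\le\dsc(G\mbox{-}\exp\BB_G)$, but this is not needed.) Everything therefore reduces to the two lower bounds $\dsc(G\mbox{-}\exp\BB_G)\ge 2^\kappa$ and $\dsc(\exp\BB_G)\ge 2^\kappa$.

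For the first I would describe the components of $G\mbox{-}\exp\BB_G$ explicitly. As the radii are the finite subsets of $G$ and $G\mbox{-}\exp B(Y,A)=\{Y\}\cup\{gY:g\in A\}$, the component $\QQ_{G\mbox{-}\exp\BB_G}(Y)$ is exactly the orbit $\{gY:g\in G\}$ of $Y$ under left translation, hence has cardinality at most $\kappa$. Since $\exp(G)$ has $2^\kappa$ points and is partitioned into these orbits, a pigeonhole count forces at least $2^\kappa$ of them: a partition of a set of size $2^\kappa$ into classes of size $\le\kappa<2^\kappa$ must have $2^\kappa$ classes (if it had $\mu$ classes then $2^\kappa\le\mu\cdot\kappa=\max(\mu,\kappa)$, so $\mu\ge 2^\kappa$). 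Thus $\dsc(G\mbox{-}\exp\BB_G)=2^\kappa$.

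The substantive part is $\dsc(\exp\BB_G)\ge 2^\kappa$, where orbit counting is useless: a single component of $\exp\BB_G$ can contain $2^\kappa$ orbits (already the component of $G$ itself in $\exp\BB_\Z$ consists of all syndetic sets, of which there are $2^{\aleph_0}$ in countably many translation orbits). Instead I would exhibit $2^\kappa$ pairwise inequivalent subsets directly. First record the description of components: writing $B_G(Y,F)=KY$ with $K=F\cup F^{-1}\cup\{e\}$, two non-empty $Y,Z$ lie in the same component of $\exp\BB_G$ iff there is a finite symmetric $K\ni e$ with $Z\subseteq KY$ and $Y\subseteq KZ$. Now fix a subset $D\subseteq G$ with $|D|=\kappa$ that is \emph{thin} in $\BB_G$. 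The key dichotomy is: for $T,T'\subseteq D$, they lie in the same component of $\exp\BB_G$ if and only if $T\triangle T'$ is finite. If $T\triangle T'$ is finite the two sets differ by finitely many points and are trivially in the same component. Conversely, suppose $T\subseteq KT'$ with $K$ finite symmetric and $e\in K$; thinness yields a finite $V$ with $Kd\cap D=\{d\}$ for all $d\in D\setminus V$, so any $d\in T\setminus T'$ would equal $kd'$ with $k\in K$, $d'\in T'\subseteq D$, forcing $k\ne e$ (since $d\notin T'$) and hence, as $d\in Kd'\cap D$ with $d\ne d'$, forcing $d'\in V$; thus $T\setminus T'\subseteq KV$ would be finite. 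So a containment $T\subseteq KT'$ is possible only when $T\setminus T'$ is finite, proving the dichotomy. Consequently the components met by the family $\{T:T\subseteq D\}$ are in bijection with the classes of subsets of $D$ modulo finite differences, of which there are $2^\kappa$ (each class has size $\kappa$ and there are $2^\kappa$ subsets). Hence $\dsc(\exp\BB_G)\ge 2^\kappa$, and combining with the upper bounds gives $\dsc(G\mbox{-}\exp\BB_G)=\dsc(\exp\BB_G)=2^\kappa$.

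I expect the main obstacle to be the existence of a thin subset $D$ of full cardinality $\kappa$: this is the one place where the algebra of $G$ really enters. It can be obtained by a transfinite recursion choosing $d_\alpha$ so that the ``ratios'' $d_\alpha d_\beta^{-1}$ ($\alpha\ne\beta$) stay finite-to-one as a function of their value (for finite $K$ this keeps $\{d\in D: Kd\cap D\ne\{d\}\}$ finite, which is exactly thinness); at each stage fewer than $\kappa$ values of $d_\alpha$ are forbidden, so a choice survives. Some care is needed in the non-abelian and torsion cases — for instance in groups of exponent two one has the unavoidable coincidence $d_\alpha d_\beta^{-1}=d_\beta d_\alpha^{-1}$, so one must aim for finite-to-one ratio maps rather than injective ones — but this degeneracy is harmless for the argument, which only uses the thinness of $D$.
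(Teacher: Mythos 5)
Your proof is correct, and its heart coincides with the paper's: both rest on a thin subset of cardinality $|G|$ and on the dichotomy that two of its subsets lie in the same connected component of $\exp\BB_G$ exactly when their symmetric difference is finite. The packaging differs in two respects. First, the paper derives the count for $G\mbox{-}\exp\BB_G$ from the one for $\exp\BB_G$ via the single chain $2^{|G|}\ge\dsc(G\mbox{-}\exp\BB_G)\ge\dsc(\exp\BB_G)\ge\dsc(\exp(\BB_G|_T))=2^{|G|}$, using $G\mbox{-}\exp\BB_G\prec\exp\BB_G$ and the monotonicity of $\dsc$ under $\prec$; you instead compute $\dsc(G\mbox{-}\exp\BB_G)$ independently, identifying its components as the left-translation orbits and counting by pigeonhole, which buys an explicit description of those components and does not rely on the preceding proposition. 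Second, where the paper cites Chou \cite{b1} for the existence of the thin set and \cite[Proposition 4.1]{b5} together with Proposition \ref{prop:charact_thin} for the mod-finite characterization of components, you prove both ingredients by hand; your version of the dichotomy, run directly against arbitrary finite $K\subseteq G$, is in fact tidier than the paper's route, which implicitly uses that $\exp(\BB_G|_T)$ equals the subballean $(\exp\BB_G)|_{\exp T}$ and that distinct components of a subballean remain distinct in the ambient ballean. The only soft spot is your construction of the thin set: it is a sketch, though a workable one (within a given stage each ratio value occurs at most twice, and forbidding all previously attained values costs fewer than $|G|$ choices per stage), whereas the paper sidesteps this point entirely by citation.
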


\begin{proof} Let $T$ be a thin subset of $G$ such that $\lvert T\rvert=\lvert G\rvert$, whose existence is proved in \cite{b1}.
%
%

Since $T$ is a thin subset of $G$, Proposition \ref{prop:charact_thin} implies that $\BB_G|_T$ coincides with the ideal ballean $\BB_\II$, where $\II$ is the ideal of all bounded subsets of $T$ (i.e., all finite subsets of $T$). By \cite[Proposition 4.1]{b5} the equivalence relation $A\sim B$, where $A,B\subseteq T$, if and only if $A\triangle B$ is finite, is precisely the equivalence relation of belonging to the same connected component in $\exp(\BB_G|_T)$. Hence, each connected component of $\exp(\BB_G|_T)$ has cardinality precisely $\lvert T\rvert=\lvert G\rvert$ and thus there are $2^{\lvert G\rvert}$ such connected components. Finally, since $G\mbox{-}\exp\BB_G\prec\exp\BB_G$,
$$
2^{\lvert G\rvert}\ge\dsc(G\mbox{-}\exp\BB_G)\ge\dsc(\exp\BB_G)\ge\dsc(\exp(\BB_G|_T))=2^{\lvert G\rvert}.
$$
\end{proof}
\begin{example} Denote by $S_\omega$ the group of all permutations of $\omega$. Let us take the ballean $\mathcal{B}=\mathcal{B}(S_{\omega},\omega,[S_\omega]^{<\omega})$
and show that $\exp\mathcal{B}$ has only two connected components: the family of all non-empty finite subsets of $\omega$ and the family of all infinite ones. 

For any two non-empty finite subset $X_{1} , X_{2}$ of $\omega$ and each $x\in X_{1}$, $y\in X_{2}$, let $s _{x,y}$ be the transposition
with support $\{x,y\}$, i.e., $s_{x,y}(x)=y$, $s_{x,y}(y)=x$ and $s_{x,y}|_{X\setminus\{x,y\}}=id|_{X\setminus\{x,y\}}$. Then
$X_1 \in \exp B (X_2,F)$ with respect to $F= \{s_{x,y}: x\in X_1, y\in X_2\}$.

We take an arbitrary infinite subset $Y$ of $\omega$, partition $Y$ into infinite subsets $Y=Y _{1} \cup Y_{2}$, and partition $\omega= W_{1} \cup W_{2}$  so that $Y_{1} \subseteq W_{1}$, $Y_{2} \subseteq W_{2}$. Then  we choose two permutations 
$f_{1}, f_{2}$  of $\omega$ so that $f_1(Y_{1})= W_{2}$, $f_2(Y_{2})= W_{1}$  and put  $F=\{id_{\omega}, f_{1}, f_{2}\}$. Then $\omega\in \exp B(Y, F)$.

In contrast to $\exp\mathcal{B}$, the ballean  $G\mbox{-}\exp\mathcal{B}(S_{\omega},  \omega, [S_\omega]^{<\omega})$  has countably
many connected components:

\vskip3pt

\centerline{$\{F\subseteq\omega : \lvert F\rvert=n\}$,  $n\in \omega$, $n> 0$,  $\{Y \subseteq\omega : \lvert\omega\setminus Y\rvert=n\}$,  $n\in \omega$    and $\{Y \subseteq\omega : \lvert Y\rvert=\lvert\omega \setminus Y\rvert=\omega\}$.}
\end{example}

 Since non-trivial cosets of a subgroup are never subgroups, the subballean $G\mbox{-}\exp\BB_G|_{L(G)}$ is trivial and so irrelevant for the purpose of this paper.


\section{Rigidity results}\label{sec:rigidity}
As we have already mentioned (see comments on Remark \ref{rem:appl_iso}), if two groups $G$ and $H$ are isomorphic, then $\mathcal L(G)\aaa\mathcal L(H)$ and $\LL(G)\aaa\LL(H)$. However, the converse is not true in general (for example, $\mathcal L(\Z)\aaa\mathcal L(\Z_{p^{\infty}})\aaa\mathcal L(\Z_{q^{\infty}})$ and $\LL(\Z_{p^{\infty}})\aaa\LL(\Z_{q^{\infty}})$). In this section we want to determine conditions that ensures that the opposite implication holds.

Let us start with some technical results which hold for the subgroup hyperballeans $\mathcal L(G)$ and $\LL(G)$.
\begin{lemma}\label{lemma:asy_ce_conncomp}
Let $X$ be a ballean.
\begin{enumerate}[(a)]
	\item If $X$ is asymorphic to $\mathcal L(\Z)$ or to $\LL(\Z)$, then $X$ has two connected components. Moreover, one connected component is a singleton, while the other one is infinite and unbounded.
	\item If $X$ is coarsely equivalent to $\mathcal L(\Z)$ or to $\LL(\Z)$, then $X$ has two connected components. Moreover, one connected component is bounded, while the other one is unbounded.
\end{enumerate}
\end{lemma}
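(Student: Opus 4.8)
The strategy is to first pin down the exact structure of the two reference balleans $\mathcal L(\Z)$ and $\LL(\Z)$, and then transport that structure across the given map using the invariance properties already recorded in the excerpt. By Proposition \ref{last:propo}(b), $\mathcal L(\Z)$ has precisely two connected components: the singleton $\{\{0\}\}$ and $\QQ_{\mathcal L(\Z)}(\Z)$, the latter consisting of all non-trivial subgroups $n\Z$ (with $n\ge 1$), hence countably infinite. Since all balls of $\mathcal L(\Z)$ are finite (Proposition \ref{last:propo}(a)) while $\QQ_{\mathcal L(\Z)}(\Z)$ is infinite, this component cannot be contained in a single ball, so it is unbounded. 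By Remark \ref{rem:logarithmic}(a), $\LL(\Z)$ has the same underlying set and the same connected components, so it too splits into the singleton $\{\{0\}\}$ and the infinite component $\QQ_{\LL(\Z)}(\Z)$; the latter is unbounded because $d^\prime(\Z,n\Z)=\log n\to\infty$ (equivalently, were it bounded, then $\LL(\Z)$ would be weakly thin with finitely many components, hence cellular and $0$-dimensional, contradicting $\asdim\LL(\Z)=\infty$ from Theorem \ref{4.12}). Thus in both cases the model ballean has exactly two components, one a singleton and the other infinite and unbounded.

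For part (a), suppose $X$ is asymorphic to a model $Y\in\{\mathcal L(\Z),\LL(\Z)\}$ via $\varphi$. An asymorphism is a bijection preserving boundedness of subsets and mapping connected components bijectively onto connected components (Remark \ref{RemIso}); in particular $\dsc X=\dsc Y=2$. The singleton component of $Y$ is an isolated point, so by Remark \ref{RemIso} its image is an isolated point of $X$, i.e.\ a singleton component. The remaining component of $X$ is asymorphic to the infinite, unbounded component of $Y$; being the bijective image of an infinite set it is infinite, and since both $\varphi$ and $\varphi^{-1}$ preserve boundedness it is unbounded. This yields exactly the two components with the asserted properties.

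For part (b), suppose $X$ is coarsely equivalent to $Y\in\{\mathcal L(\Z),\LL(\Z)\}$ via $\varphi$. By Fact \ref{fact:ce_bounded_unbounded} we have $\dsc X=\dsc Y=2$ and, for every $x\in X$, the component $\QQ_X(x)$ is bounded if and only if $\QQ_Y(\varphi(x))$ is bounded. The one point needing care is that $\varphi$ induces a genuine bijection between the two components of $X$ and those of $Y$: a bornologous map sends each component into a single component, and since $\varphi(X)$ is large in $Y$ it must meet both components of $Y$ (a ball around a set contained in one component stays inside that component, so it could never cover the disconnected $Y$), which forces the induced map on the two two-element sets of components to be onto, hence bijective. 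Transporting boundedness across this bijection, and using that $Y$ has one bounded component (the singleton) and one unbounded component, we conclude that $X$ likewise has one bounded and one unbounded component. I expect the only genuinely delicate point to be exactly this: unlike in (a), one cannot assert that the bounded component of $X$ is a singleton, since a coarse equivalence may collapse bounded sets and need not preserve cardinalities; this is precisely why (b) is stated in the weaker form.
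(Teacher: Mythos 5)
Your proof is correct and follows essentially the same route as the paper, which proves the lemma by combining Fact \ref{fact:ce_bounded_unbounded}, the description of the connected components of $\mathcal L(\Z)$ and $\LL(\Z)$ (Proposition \ref{last:propo}(b), Remark \ref{rem:Q_Zn}), and the transport of structure along asymorphisms (Remark \ref{RemIso}). Your write-up merely makes explicit some details the paper leaves implicit, such as the unboundedness of the non-trivial component of $\LL(\Z)$ via $d(\Z,n\Z)=\log n$ and the surjectivity of the induced map on components coming from largeness of the image.
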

\begin{proof}
The proof is an application of Fact \ref{fact:ce_bounded_unbounded}, Remark \ref{rem:Q_Zn} and Proposition \ref{last:propo}(b).
\end{proof}

An infinite group is said to be {\em quasi-finite} if every proper subgroup is finite. Example of quasi-finite groups are the Pr\" uffer $p$-groups and the Tarskii monsters (see Example \ref{ex:Tarskii}). Moreover, if an abelian group is quasi-finite, then it is isomorphic to Pr\" uffer $p$-group for some prime $p$.
\begin{proposition}\label{lemma:as_in_mathcalL}
	Let $G$ be a group. Suppose that $\mathcal L(G)$ ($\LL(G)$, equivalently) has precisely two connected components, one of them is a singleton and the other one is infinite. Then $G$ must be infinite. Moreover:
	\begin{enumerate}[(a)]
		\item if $G$ contains an element of infinite order then $G\simeq\mathbb{Z}$;
		\item if $G$ is a torsion group then $G$ is quasi-finite.
	\end{enumerate}
\end{proposition}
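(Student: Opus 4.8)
The plan is to convert the hypothesis on the connected components into algebraic data through the commensurability criterion recorded right after Lemma~\ref{2.3} (together with Remark~\ref{rem:logarithmic}(a)): two subgroups lie in the same connected component of $\mathcal L(G)$, equivalently of $\LL(G)$, exactly when they are commensurable. Two subgroups will act as anchors. The connected component of $\{e\}$ is the family of all finite subgroups of $G$, since $H$ is commensurable with $\{e\}$ iff $\lvert H\rvert=\lvert H:H\cap\{e\}\rvert<\infty$; the connected component of $G$ is the family of all finite-index subgroups. The claim that $G$ is infinite is immediate: a finite $G$ makes $\mathcal L(G)$ bounded, hence connected, contradicting the hypothesis of two components.

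For part (a), fix $x\in G$ of infinite order. The subgroups $\langle x^{k}\rangle$ ($k\ge 1$) are pairwise distinct and pairwise commensurable, so they all lie in a single component, which is therefore infinite. Since $\{e\}$ is not commensurable with $\langle x\rangle$, it lies in the other component, which must be the singleton; hence $\{e\}$ is isolated, the only finite subgroup of $G$ is trivial, and $G$ is torsion-free. Every non-trivial subgroup $H$ is then infinite, so it cannot sit in the singleton $\{\{e\}\}$ and must be commensurable with $\langle x\rangle$; taking $H=G$ and using $\langle x\rangle\le G$ yields $\lvert G:\langle x\rangle\rvert<\infty$. Thus $G$ is torsion-free and virtually infinite cyclic, and the remaining task is to deduce $G\simeq\Z$: I would pass to the normal core $N\le\langle x\rangle$, which is normal of finite index and isomorphic to $\Z$, consider the conjugation homomorphism $G\to\mathrm{Aut}(N)=\{\pm1\}$ with kernel $G^{+}$ of index at most $2$, note that $N$ is central of finite index in $G^{+}$, so Schur's theorem forces $[G^{+},G^{+}]$ finite, hence trivial since $G$ is torsion-free; thus $G^{+}$ is finitely generated abelian and torsion-free with a finite-index copy of $\Z$, whence $G^{+}\simeq\Z$, and a short computation excludes $\lvert G:G^{+}\rvert=2$ (the complementary element would square to the identity, an involution forbidden by torsion-freeness), so $G=G^{+}\simeq\Z$.

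For part (b), assume $G$ is torsion. Being infinite and torsion, $G$ cannot be a finite union of finite subgroups, so it has infinitely many finite subgroups; hence the connected component of $\{e\}$, which is precisely the family of all finite subgroups, is the infinite (non-singleton) component. Since $G$ is infinite it is not commensurable with any finite subgroup, so $G$ does not belong to this component and must therefore occupy the singleton component, which is consequently $\{G\}$. As the two components exhaust $L(G)$, every subgroup different from $G$ lies in the component of $\{e\}$ and is thus finite; that is, every proper subgroup of $G$ is finite and $G$ is quasi-finite.

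The identification of the two anchor components and the commensurability bookkeeping are routine; the one genuine obstacle is the final step of (a), namely that a torsion-free virtually cyclic group is infinite cyclic, which I would settle by the Schur-theoretic argument above (or by invoking the standard classification of virtually cyclic groups).
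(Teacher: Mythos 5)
Your proposal is correct, and its ballean-theoretic skeleton is the same as the paper's: you translate the two-component hypothesis through the commensurability criterion (Lemma \ref{2.3} plus Remark \ref{rem:logarithmic}(a)), identify the component of $\{e\}$ with the finite subgroups and the component of $G$ with the finite-index subgroups, deduce torsion-freeness in (a) and quasi-finiteness in (b); your argument for (b) is exactly the paper's, just spelled out more fully (the paper only needs one non-trivial finite subgroup to force $\QQ_{\mathcal L(G)}(\{e\})$ off the singleton, whereas you produce infinitely many, which is harmless). Where you genuinely diverge is the endgame of (a). The paper notes that, $G$ being torsion-free, every non-trivial subgroup is infinite, hence lies in the component of $G$ and so has finite index, and then concludes $G\simeq\Z$ by citing Fedorov's theorem \cite{b6} (an infinite group all of whose non-trivial subgroups have finite index is infinite cyclic). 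You instead use only the single commensurability $\lvert G:\langle x\rangle\rvert<\infty$, reducing to the statement that a torsion-free virtually infinite cyclic group is infinite cyclic, which you prove by hand: normal core $N$, the conjugation map $G\to\mathrm{Aut}(N)\simeq\{\pm 1\}$, Schur's theorem to make the kernel $G^+$ abelian (hence $G^+\simeq\Z$), and exclusion of the index-$2$ case. Both endgames are sound: the paper's buys brevity via a citation it reuses anyway (in Theorem \ref{theo:4.6}), while yours is self-contained modulo Schur's theorem and extracts less from the hypothesis. One small point of precision: your parenthetical that the complementary element $t$ ``would square to the identity'' compresses a step --- since $t$ inverts $G^+$ and $t^2\in G^+$, one first gets $t^2=t\,t^2\,t^{-1}=t^{-2}$, so $(t^2)^2=e$, and only then does torsion-freeness force $t^2=e$ and finally $t=e$, a contradiction --- but this is the standard computation and leaves no gap.
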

\begin{proof}
The first statement is trivial, since, otherwise, $\mathcal L(G)$ and $\LL(G)$ would be bounded.
	
(a) Let $g$ be element of infinite order of $G$. Then $\langle g\rangle\in L(G)$ is infinite, $\langle g\rangle\in\mathcal Q_{L(G)}(G)$ and thus $\QQ_{L(G)}(G)$ is infinite (as it contains the subgroups of the form $\langle g^k\rangle$, where $k\in\N$), while $\QQ_{L(G)}(\{e_G\})=\{e_G\}$. Since each infinite subgroup of $G$ is, in particular, large in $G$, it has finite index and, by Fedorov's theorem \cite{b6}, $G\simeq \mathbb{Z}$.
%

(b) Since $G$ is torsion, for every $g\in G$, $\langle g\rangle$ is a finite subgroup and thus belongs to the connected component $\QQ_{L(G)}(e_G)$. Hence, the connected component of $G$ is a singleton and every proper subgroup is finite.
\end{proof}

\subsection{Rigidity results on the subgroup exponential hyperballean $\mathcal L(G)$}\label{sub:rig_sub_exp}

\begin{corollary}
\label{theo:4.3} If a group $G$ contains an element of infinite order, then $\mathcal{L}(G)\aaa \mathcal{L}(\mathbb{Z})$ if and only if $G \simeq \mathbb{Z}$.
\end{corollary}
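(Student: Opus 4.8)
The plan is to deduce the statement directly from the two structural results established immediately above, reducing everything to the already-known behaviour of connected components under asymorphism. The ``if'' direction requires no work: if $G\simeq\Z$, then the isomorphism induces an asymorphism $\exp\BB_G\aaa\exp\BB_\Z$, which restricts to an asymorphism $\mathcal L(G)\aaa\mathcal L(\Z)$, as recorded in the comments preceding Section \ref{sec:rigidity}.

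For the non-trivial ``only if'' direction, I would argue as follows. Suppose $\mathcal L(G)\aaa\mathcal L(\Z)$. First I would invoke Lemma \ref{lemma:asy_ce_conncomp}(a): an asymorphism with $\mathcal L(\Z)$ forces $\mathcal L(G)$ to have exactly two connected components, one of which is a singleton while the other is infinite and unbounded. This step rests on the fact that an asymorphism carries isolated points to isolated points and induces a bijection between the non-trivial connected components, together with the preservation of (un)boundedness of components, as recorded in Remark \ref{RemIso} and Fact \ref{fact:ce_bounded_unbounded}.

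These are precisely the hypotheses of Proposition \ref{lemma:as_in_mathcalL}. Since by assumption $G$ contains an element of infinite order, part (a) of that proposition yields $G\simeq\Z$, which completes the argument. Thus the whole proof amounts to chaining Lemma \ref{lemma:asy_ce_conncomp}(a) into Proposition \ref{lemma:as_in_mathcalL}(a).

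I would finally remark where the genuine content actually lies: there is essentially no obstacle in the present corollary itself, since the real work has already been isolated in Proposition \ref{lemma:as_in_mathcalL}(a), whose proof uses that every infinite subgroup of such a $G$ is large in $G$, hence of finite index, so that Fedorov's theorem forces $G\simeq\Z$. The only point that must be checked carefully is that the conjunction ``one component is a singleton and the other is infinite'' transfers correctly from $\mathcal L(\Z)$ across the asymorphism to $\mathcal L(G)$, and this is exactly what Lemma \ref{lemma:asy_ce_conncomp}(a) guarantees.
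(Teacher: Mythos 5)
Your proposal is correct and coincides with the paper's own proof: both directions are handled exactly as in the text, with the ``only if'' part obtained by feeding Lemma \ref{lemma:asy_ce_conncomp}(a) into Proposition \ref{lemma:as_in_mathcalL}(a). The extra remarks on the trivial ``if'' direction and on where the real content lies are accurate but do not change the argument.
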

\begin{proof}
Lemma \ref{lemma:asy_ce_conncomp}(a) implies that $\mathcal L(G)$ has two connected components, one is infinite and the other one is just a singleton. Hence the conclusion follows from \ref{lemma:as_in_mathcalL}(a).
\end{proof}


\begin{theorem}\label{theo:4.5}
 For an abelian group $G$, $\mathcal{L}(G)\aaa \mathcal{L}(\mathbb{Z})$ if and only if  either $G\simeq \mathbb{Z}$  or  $G\simeq \mathbb{Z}_{p^{\infty}}$, for some $p$  is prime.
\end{theorem}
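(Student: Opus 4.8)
The plan is to dispatch both directions by assembling the structural results already proved for the connected components of $\mathcal L(G)$. The ``if'' direction is immediate: when $G\simeq\Z$ there is nothing to prove, and when $G\simeq\Z_{p^{\infty}}$ the asymorphism $\mathcal L(G)\aaa\mathcal L(\Z)$ is precisely the content of Corollary \ref{coro:Z_Zp_asy}.

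For the ``only if'' direction, I would assume $G$ abelian with $\mathcal L(G)\aaa\mathcal L(\Z)$ and first read off the component data. By Lemma \ref{lemma:asy_ce_conncomp}(a), $\mathcal L(G)$ has exactly two connected components, one a singleton and the other infinite and unbounded. This is exactly the hypothesis of Proposition \ref{lemma:as_in_mathcalL}, so I can invoke it directly.

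The conclusion then follows from the purely algebraic dichotomy ``$G$ is torsion versus $G$ contains an element of infinite order''. If $G$ has an element of infinite order, Proposition \ref{lemma:as_in_mathcalL}(a)---whose proof rests on Fedorov's theorem---forces $G\simeq\Z$. If instead $G$ is torsion, Proposition \ref{lemma:as_in_mathcalL}(b) shows that $G$ is quasi-finite, and since a quasi-finite \emph{abelian} group is isomorphic to some $\Z_{p^{\infty}}$ (as recalled immediately before that proposition), we obtain $G\simeq\Z_{p^{\infty}}$. These two cases exhaust all abelian $G$ and yield the claim.

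I expect no serious obstacle here: the substantive work---using that an asymorphism preserves isolated points and bounded components (Remark \ref{RemIso} and Fact \ref{fact:ce_bounded_unbounded}) to force the two-component picture, together with Fedorov's theorem to single out $\Z$---has already been carried out in Lemma \ref{lemma:asy_ce_conncomp} and Proposition \ref{lemma:as_in_mathcalL}. The only genuinely necessary observation is that the abelian hypothesis is what upgrades ``quasi-finite'' to ``$\Z_{p^{\infty}}$'', since in the non-abelian setting quasi-finite groups also include the Tarski monsters of Example \ref{ex:Tarskii}, which is why the corresponding statement for general $G$ (Corollary \ref{theo:4.3}) only covers the case of an element of infinite order.
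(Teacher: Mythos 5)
Your proposal is correct and follows essentially the same route as the paper: the ``if'' direction via Corollary \ref{coro:Z_Zp_asy}, and the ``only if'' direction via the torsion/infinite-order dichotomy, using Lemma \ref{lemma:asy_ce_conncomp}(a) together with Proposition \ref{lemma:as_in_mathcalL}(a) (which is exactly the content of Corollary \ref{theo:4.3}, cited by the paper for that case) and Proposition \ref{lemma:as_in_mathcalL}(b) plus the fact that a quasi-finite abelian group is a Pr\"ufer group. The only cosmetic difference is that you invoke the proposition directly where the paper routes through Corollary \ref{theo:4.3}.
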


\begin{proof}
The ``if part'' of the statement is proved in Corollary \ref{coro:Z_Zp_asy}.

Conversely, let us divide the proof in two cases. If $G$ is torsion, then Lemmas \ref{lemma:asy_ce_conncomp}(a) and \ref{lemma:as_in_mathcalL}(b) imply that every proper subgroup of $G$ is finite. Hence, since $G$ is abelian, $G\simeq\Z_{p^\infty}$, for some prime $p$. Otherwise, there exists and element $g\in G$ of infinite order and then the claim follows from Corollary \ref{theo:4.3}.
%
%
\end{proof}
Can we relax the hypothesis of Theorem \ref{theo:4.5}? Namely, we wonder whether the request of $G$ being abelian can be relaxed or not. Let us state it as a question.

\begin{question}\label{Question 4}
Let $G$ be a torsion group such that $\mathcal{L}(G)$ and $\mathcal{L}(\mathbb{Z})$ are asymorphic. Is $G\simeq \mathbb{Z}_{p^\infty}$ for some prime $p$?
\end{question}

An affirmative answer to the question of Example \ref{ex:Tarskii}, along with a proof similar to that of Corollary \ref{coro:Z_Zp_asy}, would show that $\LL(T)\aaa\LL(\Z)$,  for  a Tarskii monster $T$. This would provide a negative answer to Question \ref{Question 4}.

\subsection{Rigidity results on the subgroup logarithmic hyperballean $\LL(G)$}\label{sub:rig_sub_log}

\begin{theorem}\label{theo:4.11}
 Let $G$ be a group and $p$ be a prime.
	\begin{enumerate}[(a)]
		\item $\LL(G)\aaa\LL(\mathbb{Z})$ if and only if $G\simeq \mathbb{Z}$;
		\item $\LL(G)\aaa\LL(\mathbb{Z}_{p^{\infty}})$ if and only if $G\simeq \mathbb{Z}_{q^{\infty}}$ for some prime $q$.
	\end{enumerate}
\end{theorem}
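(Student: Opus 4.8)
The plan is to prove the two statements in parallel, dispatching the easy ``if'' directions first and then reducing each ``only if'' direction to an analysis of quasi-finite groups, with the asymptotic dimension serving as the decisive separating invariant between $\LL(\mathbb{Z})$ and $\LL(\mathbb{Z}_{p^\infty})$.

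For the ``if'' directions, if $G\simeq\mathbb{Z}$ then $\LL(G)\aaa\LL(\mathbb{Z})$ is immediate, since isomorphic groups have asymorphic logarithmic hyperballeans. For (b), if $G\simeq\mathbb{Z}_{q^\infty}$ for some prime $q$, then $\LL(G)=\LL(\mathbb{Z}_{q^\infty})\aaa\LL(\mathbb{Z}_{p^\infty})$ because both are asymorphic to the coproduct of $\N$ and a singleton by Proposition \ref{prop:Zp_N}, independently of the prime (cf. Corollary \ref{coro:asdim_L_N}).

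For the ``only if'' directions I would first extract the coarse shape of $\LL(G)$ and then feed it into Proposition \ref{lemma:as_in_mathcalL}. In case (a), Lemma \ref{lemma:asy_ce_conncomp}(a) shows that $\LL(G)$ has exactly two connected components, one a singleton and the other infinite and unbounded; in case (b), Proposition \ref{prop:Zp_N} together with Remark \ref{RemIso} shows that $\LL(G)$ has exactly two connected components, one a singleton and the other asymorphic to $\N$, in particular infinite. Either way the hypotheses of Proposition \ref{lemma:as_in_mathcalL} are met, so $G$ is infinite and either contains an element of infinite order, whence $G\simeq\mathbb{Z}$, or is torsion, whence $G$ is quasi-finite. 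Since the asymptotic dimension is an asymorphism invariant, I also record $\asdim\LL(G)=\infty$ in case (a) (Theorem \ref{4.12}) and $\asdim\LL(G)=1$ in case (b) (Corollary \ref{coro:asdim_L_N}).

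It then remains to decide, in each case, between the two alternatives produced by Proposition \ref{lemma:as_in_mathcalL}. For (a) the alternative $G\simeq\mathbb{Z}$ is the desired conclusion, so I must exclude quasi-finite torsion $G$; for (b) the alternative $G\simeq\mathbb{Z}$ is excluded immediately since $\asdim\LL(\mathbb{Z})=\infty\neq 1$, and I must show that a quasi-finite torsion $G$ with $\QQ_{\LL(G)}(\{e\})\aaa\N$ is forced to be $\mathbb{Z}_{q^\infty}$. The common heart of both cases is the following analysis of a quasi-finite group $G$: its unique infinite subgroup is $G$ itself, so the non-trivial component of $\LL(G)$ is exactly the family of all finite subgroups, carrying the metric $d(H,K)=\log\max\{\lvert H:H\cap K\rvert,\lvert K:H\cap K\rvert\}=\max\{v(H),v(K)\}-v(H\cap K)$ with $v(H):=\log\lvert H\rvert$. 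The hard part will be to show that this valuation metric is always tree-like, so $\asdim\le 1$: intuitively the only way to escape to infinity among finite subgroups is to climb toward $G$, so an infinite strictly ascending chain of finite subgroups can occur only in the locally finite case and must then be cofinal, ruling out genuinely two-dimensional configurations. This bounds $\asdim\LL(G)\le 1$ for every quasi-finite $G$, contradicting $\asdim=\infty$ in (a) and thereby forcing $G\simeq\mathbb{Z}$. For (b) the sharper requirement $\QQ_{\LL(G)}(\{e\})\aaa\N$ forces finite balls and a single cofinal chain of finite subgroups, hence the finite subgroups of $G$ are linearly ordered by inclusion; such a $G$ is locally cyclic, hence abelian, and an abelian quasi-finite group is $\mathbb{Z}_{q^\infty}$. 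Carefully excluding the ``bushy'' quasi-finite groups here (Tarski monsters, whose non-trivial component is bounded by Example \ref{ex:Tarskii}(b), and finitely generated quasi-finite groups, whose non-trivial component has infinite balls and so cannot be $\aaa\N$) is where the main effort lies.
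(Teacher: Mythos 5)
Your ``if'' directions, the extraction of the component structure (Lemma \ref{lemma:asy_ce_conncomp}(a), Proposition \ref{prop:Zp_N}, Remark \ref{RemIso}), the reduction via Proposition \ref{lemma:as_in_mathcalL} to the dichotomy ``$G\simeq\Z$ or $G$ quasi-finite torsion'', and the use of asymptotic dimension as the invariant separating $\LL(\Z)$ from $\LL(\Z_{p^{\infty}})$ all agree with the paper. The gap is in how you dispose of the quasi-finite torsion case, which is the heart of both items. In case (a) you rest everything on the claim that for \emph{every} quasi-finite group the metric $d(H,K)=\max\{v(H),v(K)\}-v(H\cap K)$, $v(H):=\log\lvert H\rvert$, on finite subgroups is tree-like, hence of asymptotic dimension at most $1$ --- and you explicitly defer its proof. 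The one argument you sketch (an infinite ascending chain of finite subgroups forces local finiteness, since its union would be an infinite, hence improper, subgroup) is fine as far as it goes, but the absence of such chains does not ``rule out genuinely two-dimensional configurations'': coarse copies of $\N^2$ need not come from nested subgroups, and the very same valuation-metric formula on the finite subgroups of $\Z_{p^{\infty}}\oplus\Z_{p^{\infty}}$ does produce a coarse $\N^2$ (see \eqref{eq:asdim_S}), so any proof of your claim must use quasi-finiteness in an essential way that is simply not there, in particular for the exotic finitely generated quasi-finite groups of Olshanskii type. In case (b) the step ``$\QQ_{\LL(G)}(\{e\})\aaa\N$ forces the finite subgroups to be linearly ordered by inclusion'' is a non sequitur: an asymorphism carries only coarse information, and a cofinal chain decorated with boundedly many nearby subgroups at each level would be equally asymorphic to $\N$. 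Likewise your parenthetical claim that finitely generated quasi-finite groups have infinite balls in $\LL$ is itself unproven (it amounts to saying such groups are never layerly finite, which needs the same structure theory you are trying to avoid).

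The paper closes exactly this gap by using the asymorphism itself rather than an unconditional statement about quasi-finite groups. Since all balls of $\LL(\Z)$ and of $\LL(\Z_{p^{\infty}})$ are finite and finiteness of balls is preserved by asymorphisms, $\LL(G)$ has finite balls; because all subgroups of a fixed order $n$ lie in a common ball of radius $\log n$ (if $A,B$ have order $n$ then $d_{\LL(G)}(A,B)\le\log n$), the group $G$ must be layerly finite. Chernikov's theorem \cite{b7} then says that a layerly finite group either contains a copy of $\Z_{q^{\infty}}$ or is a subdirect product of finite groups; the latter would give a proper subgroup of finite index, necessarily infinite, contradicting quasi-finiteness, so $G\simeq\Z_{q^{\infty}}$. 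This settles (b), and in (a) the comparison $\asdim\LL(\Z_{q^{\infty}})=1\neq\infty=\asdim\LL(\Z)$ (Corollary \ref{coro:asdim_L_N} and Theorem \ref{4.12}) yields the contradiction. So the missing ingredient in your proposal is precisely this layerly-finiteness-plus-Chernikov bridge from ``quasi-finite torsion'' to ``Pr\"ufer''; without it, or without an actual proof of your tree-likeness claim, the argument does not go through.
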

\begin{proof}
	(a) 
	Assume that $\LL(G)$ is asymorphic to $\LL(\mathbb{Z})$. If $G$ has an element of infinite order then $G\simeq\mathbb{Z}$, by Lemma \ref{lemma:asy_ce_conncomp}(a) and Proposition \ref{lemma:as_in_mathcalL}(a). Suppose now, by contradiction, that $G$ is a torsion group. By Proposition \ref{lemma:as_in_mathcalL}(b), $G$ is quasi-finite. We show that $G$ is layerly finite. If $A, B$  are subgroup of order  $n$ then $A\subseteq AB$, $B\subseteq BA$  so $d_{\LL(G)} (A,B)\leq\log n$.  If some $X_{n}$ is infinite then $\LL(G) $  has an infinite ball of radius $\log n$, but each ball in $\LL(\mathbb{Z})$ is finite.
		 By \cite{b7}, $G$ either has a subgroup $H, H \simeq  \mathbb{Z}_{p^{\infty}}$  or $G$ is the subdirect product of finite groups.
	  Since this implies the existence of proper (normal) subgroups of finite index
and $G$ is quasi-finite, the second case is impossible. So we are left with $H \simeq  \mathbb{Z}_{p^{\infty}}$.
Since $H$ is infinite and $G$ is quasi-finite, $H\simeq G$. This contradicts the conjunction of Corollary \ref{coro:asdim_L_N} \& Theorem \ref{4.12}.
	
	(b) Corollary \ref{coro:asdim_L_N} implies that $\LL(\Z_{p^\infty})\aaa\LL(\Z_{q^\infty})$ for every pair of primes $p$ and $q$. Conversely, suppose that $\LL(G)\aaa\LL(\Z_{p^\infty})$. If, by contradiction, $G$ contains an element of infinite order, then $G\simeq\Z$, by Proposition \ref{lemma:as_in_mathcalL}(a). This contradicts $\LL(\Z_{p^\infty})\not \approx \LL(\Z)$ established in Corollary \ref{coro:asdim_L_N} and Theorem \ref{4.12}. Hence $G$ is torsion. Using Proposition \ref{lemma:as_in_mathcalL}(b) as above, we conclude that $G$ is quasi-finite and layerly finite, and consequently,
$H \simeq  \mathbb{Z}_{q^{\infty}}$ for some prime $q$.
%
\end{proof}

Note that in Theorem \ref{theo:4.11} we don't require that the group is abelian.

\subsection{Rigidity results and questions on divisible and finitely generated abelian groups}\label{sub:rig_div}

We pointed out in \S\ref{sub:isoLG} that divisibility of a group is related to some strong property of its hyperballean. So it is natural to ask if we can find some rigidity result in this setting.

\begin{lemma}\label{lemma:no_N_in_Q} For no cardinal $\kappa$, $\LL(\Q^\kappa)$ has a connected component asymorphic to $\N$.
\end{lemma}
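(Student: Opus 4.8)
The plan is to fix a single connected component $C=\QQ_{\LL(\Q^\kappa)}(H)$, i.e.\ the commensurability class of a subgroup $H\le\Q^\kappa$, and to show $C\not\aaa\N$; since $\LL(\Q^\kappa)$ is the disjoint union of its components this suffices. Because $\asdim$ is a coarse (hence asymorphism) invariant and $\asdim\N=1$, the only case needing work is $\asdim C=1$: if $\asdim C\neq1$ then $C\not\aaa\N$ for free. In particular I may assume $C$ is unbounded, so $C$ is not a singleton; as $\Q^\kappa$ is torsion-free divisible, Proposition \ref{abba} shows the isolated points are exactly the divisible subgroups, so $H$ is \emph{not} divisible. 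A first reduction: if $K\sim H$ then $NK\subseteq H$ for some $N\ge1$, whence $K\subseteq\tfrac1N H\subseteq V:=\langle H\rangle_{\Q}$, the $\Q$-span of $H$. Thus $C$ is already a component of $\LL(V)$ with $V\cong\Q^{(r)}$, $r=r_0(H)$, and I work inside $V$.

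Next I would rule out the higher-dimensional behaviour by grid constructions, all of the type ``divide $\Q$-independent elements of $H$ by prime powers,'' exactly as in the proofs of Theorems \ref{4.12} and \ref{4.13}. Concretely: if for a single $x\in H$ there are distinct primes $p_1,\dots,p_n$ with $x\notin p_iH$, then the assignment $(a_1,\dots,a_n)\mapsto H+\tfrac{1}{p_1^{a_1}\cdots p_n^{a_n}}\Z x$ is, by the computation in Theorem \ref{4.12}, an asymorphism of $\N^n$ onto a subballean of $C$; similarly, two primes $p\neq q$ together with $\Q$-independent $x,y\in H$, $x\notin pH$, $y\notin qH$, give $(a,b)\mapsto H+\tfrac1{p^a}\Z x+\tfrac1{q^b}\Z y$ with $|H_{a,b}:H|=p^aq^b$ (the cyclic images have coprime orders), again a copy of $\N^2$; and if some $\dim_{\Z/p}(H/pH)$ is infinite, picking elements independent in $H/pH$ and adjoining their $p$-th parts produces a subballean bi-Lipschitz to the Hamming space $\mathbb H_\omega$, so $\asdim C=\infty$ by Proposition \ref{dimHamm}. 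Each of these forces $\asdim C\ge2$, contradicting $\asdim C=1$. Discarding all of them, all the non-divisibility of $H$ must be concentrated at a single prime $p$ along a single rank-one direction, and in that case the structure theory of subgroups of $V$ (encoding a rank-one subgroup of $\Q$ by its vector of $p$-adic valuations) yields that $C$ is asymorphic to $\Z$ equipped with a positive scalar multiple of the usual metric.

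The proof then concludes with the elementary fact that $\Z\not\aaa\N$: the number of ends is a coarse-equivalence invariant, and removing a bounded ball from $\Z$ leaves two unbounded pieces while removing one from $\N$ leaves a single one. The conceptual reason this is forced is divisibility of $\Q^\kappa$: for $K\in C$ and a prime $p$ with $1\le|K:pK|<\infty$ one has a \emph{bi-infinite} commensurable chain $\cdots\subsetneq p^2K\subsetneq pK\subsetneq K\subsetneq\tfrac1pK\subsetneq\cdots$, so $C$ has no ``coarse endpoint'' and is two-sided, unlike the one-sided $\N$ (whose appearance in $\LL(\Z_{p^\infty})$, Proposition \ref{prop:Zp_N}, comes precisely from the \emph{minimal} subgroup $\{0\}$, which has no analogue in the torsion-free group $\Q^\kappa$). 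I expect the main obstacle to be the middle step: upgrading the existence of a two-sided line through each point to the global identification of the finite-dimensional components with the line $\cong\Z$, which requires carefully combining divisibility, finiteness of the relevant dimensions $\dim_{\Z/p}(H/pH)$, and the logarithmic metric. It is also worth stressing that asymptotic dimension alone cannot separate $\N$ from $\Z$, so the number-of-ends (two-sidedness) argument is indispensable in the final step.
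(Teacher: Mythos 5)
Your overall strategy can be made to work, but as written there is a genuine gap in the middle step, exactly where you anticipated trouble. Your three grid cases --- (i) one element non-divisible at several distinct primes, (ii) two $\Q$-independent elements non-divisible at two distinct primes, (iii) infinite $\dim_{\Z/p\Z}(H/pH)$ --- do not exhaust the configurations forcing $\asdim\ge 2$, so the inference ``discarding all of them, the non-divisibility of $H$ is concentrated at a single prime along a single rank-one direction'' is a non sequitur. The missing case is a single prime $p$ with $2\le\dim_{\Z/p\Z}(H/pH)<\infty$. Concretely, let $\Z_{(p)}\subseteq\Q$ be the ring of rationals with denominator coprime to $p$ and take $H=\Z_{(p)}\oplus\Z_{(p)}\subseteq\Q^2$: every element of $H$ is $q$-divisible for all $q\ne p$, so (i) and (ii) are vacuous, and $\dim_{\Z/p\Z}(H/pH)=2<\infty$, so (iii) fails; yet the non-divisibility lives in two independent directions and $\QQ_{\LL(\Q^2)}(H)$ is \emph{not} asymorphic to $\Z$, since it contains the subgroups $H_{a,b}=p^{-a}\Z_{(p)}\oplus p^{-b}\Z_{(p)}$, $(a,b)\in\N^2$, with $H_{a,b}\cap H_{a',b'}=H_{\min(a,a'),\min(b,b')}$, whose logarithmic metric is bi-Lipschitz to the taxi metric on $\N^2$ (use $\max\{A,B\}\ge\tfrac{1}{2}(A+B)$, so the restriction to equal primes of your Theorem \ref{4.12}-type computation still goes through). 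Thus your residual classification is false as stated; under your standing hypothesis $\asdim C=1$ this configuration must be excluded by a \emph{fourth} grid construction: if $x,y\in H$ are independent modulo $pH$, then $H+p^{-a}\Z x+p^{-b}\Z y$ has index exactly $p^{a+b}$ over $H$ and these subgroups again form a coarse copy of $\N^2$. (Note that $\Q$-independence is neither needed here nor in your case (ii); what matters is independence in $H/pH$, respectively coprimality of the orders.) With this case added, the discard list becomes exhaustive, the residual case really is ``at most one prime $p$, with $\dim_{\Z/p\Z}(H/pH)=1$'', there $\QQ(H)=\{p^kH:k\in\Z\}$ is isometric to $(\log p)\Z$, and your ends argument finishes the proof.

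For comparison, the paper's proof consists only of (a crude form of) your final ``conceptual reason'' paragraph: for non-divisible $H$ pick $n$ with $nH\lneq H$ and observe that the bi-infinite chain $\cdots\lneq n^2H\lneq nH\lneq H\lneq\tfrac{1}{n}H\lneq\cdots$ is asymorphic to $\Z$, which is not asymorphic to any subballean of $\N$; no asymptotic dimension enters at all. Your extra caution in that paragraph --- requiring $1\le\lvert K:pK\rvert<\infty$ before invoking the chain --- is in fact warranted: for a subgroup such as $H=\bigoplus_{\omega}\Z$ one has $\lvert H:nH\rvert=\infty$ for every $n$ with $nH\ne H$, so the chain members are pairwise non-commensurable, the chain is not contained in the connected component of $H$ and is not asymorphic to $\Z$ (it is an infinite discrete space); this is a point the printed proof passes over in silence, and it is precisely what your grid and Hamming cases are designed to handle. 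So your route, once the missing same-prime case is inserted, is complete and more scrupulous than the paper's one-chain argument, at the cost of being considerably longer.
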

\begin{proof}
Let $H$ be an arbitrary subgroup of $\Q^\kappa$ and suppose that $H$ is not divisible since, otherwise, $\QQ_{L(\Q^\kappa)}(H)=\{H\}\not\aaa\N$. Since $H$ is not divisible, there exists $n\in\N$ such that $nH\lneq H$. Note that, this is equivalent to $H\lneq(1/n)H$. Hence, in particular, we can construct a chain of subgroups as follows:
$$
\cdots\lneq n^kH\lneq\cdots\lneq n^2H\lneq nH\lneq H\lneq \frac{1}{n}H\lneq \frac{1}{n^2}H\lneq\cdots\lneq\frac{1}{n^k}H\lneq\cdots.
$$
 Note that this chain is asymorphic to $\Z$, which is not asymorphic to $\N$ and this observation concludes the proof.
\end{proof}

\begin{proposition}\label{prop:tor_free}
Let $D$ and $D^\prime$ be two divisible abelian groups. Then $D$ is torsion-free if and only if $D^\prime$ is torsion-free, provided that $\LL(D)\aaa\LL(D^\prime)$.
\end{proposition}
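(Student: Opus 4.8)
The plan is to isolate a single large-scale invariant of $\LL(E)$, for $E$ a divisible abelian group, that detects precisely whether $E$ is torsion-free: namely the presence of a connected component asymorphic to $\N$. Concretely, I would establish the dichotomy that, for a divisible abelian group $E$, the ballean $\LL(E)$ has a connected component asymorphic to $\N$ if and only if $E$ is \emph{not} torsion-free. Granting this, the proposition follows immediately: since $\LL(D)\aaa\LL(D^\prime)$, Remark \ref{RemIso} supplies a bijection between the non-trivial connected components of $\LL(D)$ and those of $\LL(D^\prime)$ under which corresponding components are asymorphic; hence $\LL(D)$ admits a component asymorphic to $\N$ exactly when $\LL(D^\prime)$ does, and the dichotomy turns this into the equivalence ``$D$ is not torsion-free iff $D^\prime$ is not torsion-free'', i.e. $D$ is torsion-free iff $D^\prime$ is.

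For the direction ``torsion-free $\Rightarrow$ no such component'', I would use that a torsion-free divisible abelian group is a $\Q$-vector space, hence a direct sum of copies of $\Q$. Thus Lemma \ref{lemma:no_N_in_Q} applies to $D$ (its proof uses only torsion-freeness and divisibility of the ambient group) and yields that no connected component of $\LL(D)$ is asymorphic to $\N$.

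For the converse, suppose $E$ is divisible but not torsion-free. Then $t(E)\neq\{0\}$ is itself divisible, so it contains a copy of $\Z_{p^\infty}$ for some prime $p$; being divisible, this copy splits, so I may write $E=\Z_{p^\infty}\oplus E_0$ with $E_0$ divisible. The heart of the argument is to identify the \emph{entire} connected component of $E_0$ in $\LL(E)$. Here I would invoke that the divisible group $E_0$ has no proper subgroup of finite index (Claim \ref{Cl1}): if $A$ is commensurable with $E_0$, then $|E_0:A\cap E_0|<\infty$ forces $A\cap E_0=E_0$, so $E_0\le A$, while $|A:E_0|<\infty$ makes $A/E_0$ a finite subgroup of $E/E_0\cong\Z_{p^\infty}$; consequently $A=H_n\oplus E_0$, where $H_n$ is the subgroup of order $p^n$ of the $\Z_{p^\infty}$-summand. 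Thus $\QQ_{\LL(E)}(E_0)=\{H_n\oplus E_0:n\in\N\}$, and using the metric characterisation of $\LL$ recorded after Lemma \ref{2.3}, together with $(H_n\oplus E_0)\cap(H_m\oplus E_0)=H_{\min\{n,m\}}\oplus E_0$, a direct computation gives $d^\prime(H_n\oplus E_0,H_m\oplus E_0)=|n-m|\log p$. Hence this component is isometric to $(\log p)\N$ and so asymorphic to $\N$.

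The main obstacle is the converse direction, and specifically pinning down the component of $E_0$ \emph{exactly}, rather than merely locating the chain $\{H_n\oplus E_0\}$ inside some larger component. This is precisely where the absence of finite-index subgroups in the divisible group $E_0$ is indispensable, since it forces $A\cap E_0=E_0$ and thereby prevents the component from becoming higher-dimensional; compare the phenomenon $\asdim\QQ_{\LL(\Z_{p^\infty}\oplus\Z_{p^\infty})}(\{0\})\ge 2$ in \eqref{eq:asdim_S}, which shows that the ``wrong'' choice of base point (e.g. $\{0\}$) would ruin the asymorphism with $\N$. Everything else reduces to the structure theory of divisible groups (Fact \ref{theo:divis_structure}) and the routine fact that scaling the metric of $\N$ by $\log p$ produces an asymorphic ballean.
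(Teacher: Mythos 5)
Your proposal is correct and takes essentially the same route as the paper: both detect torsion via the presence of a connected component asymorphic to $\N$, using the splitting of the non-torsion-free group as $\Z_{p^\infty}\oplus E_0$ with $E_0$ divisible, Lemma \ref{lemma:no_N_in_Q} for the torsion-free side, and Remark \ref{RemIso} to transfer the invariant across the asymorphism. The only difference is that you prove in detail the identification $\QQ_{\LL(E)}(E_0)=\{H_n\oplus E_0 : n\in\N\}\aaa\N$, which the paper's proof asserts without justification, so your write-up is in fact more complete than the original.
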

\begin{proof}
Suppose that $D$ is torsion-free.
Let $D^\prime$ have torsion. Then, by Fact \ref{theo:divis_structure},
$$
D^\prime\simeq\Q^{r_0(D^\prime)}\oplus\Z_{p^{\infty}}\oplus H,
$$
where $p$ is a prime and $H\leq t(D^\prime)$. Define $K=\Q^{r_0(D^\prime)}\oplus\{0\}\oplus H$. Then $\QQ_{\LL(D^\prime)}(K)\aaa\N$. As $D$ is torsion-free, $D\simeq\Q^{r_0(D)}$ and there is no connected component asymorphic to $\N$, by Lemma \ref{lemma:no_N_in_Q}.
\end{proof}

 Moreover, we can prove a stronger version of Remark \ref{rem:appl_iso}(c) and (d).

\begin{corollary}\label{prop:rig_div} Let $G$ be a divisible abelian group.
	\begin{enumerate}[(a)]
		\item Then $\LL(G)\aaa\LL(\Q)$ if and only if $G\simeq\Q$.
		\item Suppose that $\kappa$ is an infinite cardinal. Then, under the assumption of the Generalised Continuum Hypothesis, $\LL(G)\aaa\LL(\Q^\kappa)$ if and only if $G\simeq\Q^\kappa$.
	\end{enumerate}
\end{corollary}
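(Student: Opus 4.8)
The plan is to deduce both items from the torsion-free cases already recorded in Remark \ref{rem:appl_iso}(c) and (d), the only new ingredient being Proposition \ref{prop:tor_free}, which lets us discard the torsion-freeness hypothesis. In both statements the ``if'' direction is immediate, since isomorphic groups always have asymorphic logarithmic hyperballeans. So I would focus on the ``only if'' direction, where the opening move is identical in the two cases: assuming $\LL(G)\aaa\LL(\Q)$ (respectively $\LL(G)\aaa\LL(\Q^\kappa)$), I observe that the comparison group $\Q$ (respectively $\Q^\kappa$) is divisible and torsion-free. As $G$ is divisible by hypothesis, Proposition \ref{prop:tor_free} then forces $G$ to be torsion-free as well. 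Hence in either case $G$ is a divisible torsion-free abelian group, i.e. $G\simeq\Q^{(r_0(G))}$ by Fact \ref{theo:divis_structure}, and we are back in the situation governed by Remark \ref{rem:appl_iso}.

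For (a), once $G$ is known to be divisible and torsion-free, I would simply invoke Remark \ref{rem:appl_iso}(c) to conclude $G\simeq\Q$. To keep the argument transparent one may instead unwind it: asymorphisms preserve the number of isolated points (Remark \ref{RemIso}), so $\LL(G)\aaa\LL(\Q)$ gives $\lvert\iso(G)\rvert=\lvert\iso(\Q)\rvert=2$, whence Proposition \ref{rem:rank_divisble_iso}(b) yields $r_0(d(G))=r_0(G)=1$, that is $G\simeq\Q$.

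For (b), after the reduction $G$ is divisible torsion-free and $\Q^\kappa$ is divisible torsion-free of infinite free rank (since $\kappa$ is infinite). Both groups thus meet the hypotheses of Remark \ref{rem:appl_iso}(d), so under GCH the asymorphism $\LL(G)\aaa\LL(\Q^\kappa)$ gives $G\simeq\Q^\kappa$. Spelling out the mechanism, the equality $\lvert\iso(G)\rvert=\lvert\iso(\Q^\kappa)\rvert$ reads $2^{r_0(G)}=2^{r_0(\Q^\kappa)}$ with both ranks infinite by Proposition \ref{rem:rank_divisble_iso}(d); GCH then lets one cancel the exponentials (as $2^\lambda=\lambda^+$ and successor is injective on cardinals) to obtain $r_0(G)=r_0(\Q^\kappa)$, and since divisible torsion-free groups are classified by their free rank, $G\simeq\Q^\kappa$ follows.

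The only genuinely new step beyond Remark \ref{rem:appl_iso} is the passage from ``divisible and torsion-free'' to ``merely divisible'', and this is precisely what Proposition \ref{prop:tor_free} supplies; everything afterwards is bookkeeping of isolated points together with the use of GCH to invert $\lambda\mapsto 2^\lambda$. I therefore expect no serious obstacle here: the crux is recognizing that Proposition \ref{prop:tor_free} is the right device to eliminate torsion-freeness, after which the earlier rank computations apply verbatim.
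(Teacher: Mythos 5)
Your proposal is correct and takes essentially the same route as the paper: the paper's proof likewise uses Proposition \ref{prop:tor_free} (with $\Q$, resp.\ $\Q^\kappa$, divisible and torsion-free) to force $G$ to be torsion-free, and then invokes Remark \ref{rem:appl_iso}(c) and (d) to conclude. Your unwinding of the isolated-point counts via Remark \ref{RemIso} and Proposition \ref{rem:rank_divisble_iso}, including the GCH cancellation $2^{r_0(G)}=2^{r_0(\Q^\kappa)}\Rightarrow r_0(G)=r_0(\Q^\kappa)$, is exactly the mechanism the cited remark relies on.
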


\begin{proof} Proposition \ref{prop:tor_free} implies that $G$ is torsion-free, provided $\LL(G)\aaa\LL(\Q)$ or
$\LL(G)\aaa\LL(\Q^\kappa)$ and thus we can apply Remark \ref{rem:appl_iso}
to prove both claims.
\end{proof}

\begin{question}\label{q:divisible}
Let $G$ be an abelian group and $D$ be a divisible abelian group. Is it true that $G$ is divisible, provided that $\LL(G)\aaa\LL(D)$?
\end{question}

Let $G$ and $D$ be as in Question \ref{q:divisible}. Then
$G\simeq d(G)\oplus H$ for some subgroup $H$ of $G$. By Corollary \ref{coro:iso_divis1}, $\iso(D)\neq\emptyset$.
This, along with Corollary \ref{coro:iso_divis}, implies  $t(H)=\{0\}$.

\begin{question}
Let $G$ be an abelian group such that $t(d(G))=\{0\}$. Is it true that $G\simeq\Q$, provided that either $\mathcal L(G)\aaa\mathcal L(\Q)$ or $\LL(G)\aaa\LL(\Q)$?
\end{question}

\begin{question}
Is it true that $\mathcal L(\Q)\aaa\mathcal L(\Q\oplus\Z_{p^\infty})$?
\end{question}
\begin{question}
Is it true that $\mathcal L(\Q\oplus\Z)\aaa\mathcal L(\Q)$ or $\LL(\Q\oplus\Z)\aaa\LL(\Q)$?
\end{question}

\begin{question}\label{q:fin_generated}
Let $G$ be an abelian group and $H$ be a finitely generated abelian group. Suppose that $\LL(G)\aaa\LL(H)$. Is it true that $G$ is finitely generated?
\end{question}

Note that $\mathcal L(\Z_{p^\infty})\aaa\mathcal L(\Z)$, where $\Z$ is finitely generated and it is not divisible, while $\Z_{p^\infty}$ is not finitely generated, although it is divisible.
This is why we formulate Questions \ref{q:divisible} and \ref{q:fin_generated} only for the subgroup logarithmic hyperballean $\LL(G)$

\subsection{Results on coarsely equivalent subgroup exponential hyperballeans}\label{sub:rig_ce}

\begin{lemma}\label{lemma:ce_groups_to_ce_hyp}
Let $G$ and $H$ be two groups.
\begin{enumerate}[(a)]
\item If there exist two homomorphisms $f\colon G\to H$ and $g\colon H\to G$ such that $f\circ g\sim 1_H$ and $g\circ f\sim 1_G$, then $f\colon\BB_G\to\BB_H$ is a coarse equivalence, with coarse inverse $g\colon\BB_H\to\BB_G$, and $\mathcal L(f)=\exp f|_{L(G)}\colon\mathcal L(G)\to\mathcal L(H)$ is a coarse equivalence, with inverse $\mathcal L(g)\colon\mathcal L(H)\to\mathcal L(G)$.
\item Let $H$ be a finite normal subgroup of $G$. Then the quotient map $q\colon \mathcal L(G)\to \mathcal L(G/H)$ is a coarse equivalence and, moreover, $\mathcal L(q)\colon\mathcal L(G)\to\mathcal L(G/H)$ is a coarse equivalence.
\end{enumerate}
\end{lemma}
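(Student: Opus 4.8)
The plan is to reduce both items to two soft properties of the functor $\exp$, the only delicate point being that $\exp$ cooperates with the subgroup lattices precisely because homomorphisms and their preimages carry subgroups to subgroups. The two facts I would establish first are: (i) if $u\colon G\to H$ is a homomorphism, then $\exp u\colon\exp\BB_G\to\exp\BB_H$, $Y\mapsto u(Y)$, is bornologous, with radius $A$ sent to $u(A)$ (for $Z\in\exp B_G(Y,A)$ one has $u(B_G(y,A))=B_H(u(y),u(A))$, whence $u(Z)\subseteq B_H(u(Y),u(A))$ and $u(Y)\subseteq B_H(u(Z),u(A))$); and (ii) $\exp$ preserves closeness of maps, i.e.\ if $u,v\colon G\to H$ satisfy $u\sim v$ in $\BB_H$ with witnessing radius $F$, then $\exp u\sim\exp v$ in $\exp\BB_H$ with the same $F$ (for each $Y\subseteq G$ the inclusions $u(Y)\subseteq B_H(v(Y),F)$ and $v(Y)\subseteq B_H(u(Y),F)$ follow pointwise from $u(y)\in B_H(v(y),F)$ and symmetry). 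Together with functoriality, $\exp(f\circ g)=\exp f\circ\exp g$ and $\exp 1_G=1_{\exp G}$, these make $\exp$ carry coarse equivalences to coarse equivalences.

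For item (a), homomorphisms are bornologous (Example \ref{ex:ball}), so $f$ and $g$ are bornologous maps between $\BB_G$ and $\BB_H$; with $f\circ g\sim 1_H$ and $g\circ f\sim 1_G$ this is exactly the definition of a coarse equivalence with coarse inverse $g$. Applying (i), (ii) and functoriality then shows $\exp f$ is a coarse equivalence $\exp\BB_G\to\exp\BB_H$ with inverse $\exp g$. Since $f,g$ are homomorphisms they send subgroups to subgroups, so $\exp f,\exp g$ restrict to $\mathcal L(f)\colon L(G)\to L(H)$ and $\mathcal L(g)\colon L(H)\to L(G)$; the intermediate values of the compositions stay inside the subgroup lattices, and both bornologousness and closeness descend to the subballeans because a subballean ball is the trace of the ambient ball. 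Hence $\mathcal L(f)$ is a coarse equivalence with inverse $\mathcal L(g)$.

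For item (b), the quotient homomorphism $q\colon\BB_G\to\BB_{G/H}$ is bornologous and surjective, so by the characterisation of coarse equivalences the only issue is effective properness: given a finite $\bar A\subseteq G/H$, the set $D:=q^{-1}(\{\bar e\}\cup\bar A\cup\bar A^{-1})$ is \emph{finite} because $H$ is finite, and using normality of $H$ (so $xhx^{-1}\in H$) one bounds $q^{-1}(B_{G/H}(q(x),\bar A))$ inside $B_G(x,\beta)$ with $\beta$ depending only on $D$ and $H$; thus $q\colon\BB_G\to\BB_{G/H}$ is a coarse equivalence. To pass to $\mathcal L(q)$ I would not simply invoke that $\exp$ preserves coarse equivalences, since a set-theoretic section of $q$ need not send subgroups to subgroups and so its $\exp$ does not restrict to $L(G/H)$. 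Instead I would exhibit the coarse inverse directly via the preimage map $s\colon\mathcal L(G/H)\to\mathcal L(G)$, $s(\bar K)=q^{-1}(\bar K)$, which does land in $L(G)$.

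Concretely, $\mathcal L(q)(K)=q(K)$ is bornologous by (i), and $\mathcal L(q)\circ s=1_{L(G/H)}$ holds exactly by surjectivity of $q$. For the other composition $s(\mathcal L(q)(K))=q^{-1}(q(K))=KH$; since $H$ is finite and normal, $kh=(khk^{-1})k\in Hk$ gives $KH\subseteq B_G(K,H\cup\{e\})$, while $K\subseteq KH$ trivially, so $KH\in\exp B_G(K,H\cup\{e\})$ uniformly in $K$, i.e.\ $s\circ\mathcal L(q)\sim 1_{\mathcal L(G)}$. Finally $s$ is bornologous by the same finiteness-of-$H$ estimate as above: if $\bar K'\in\exp B_{G/H}(\bar K,\bar A)$ then $q^{-1}(\bar K')\in\exp B_G(q^{-1}(\bar K),\beta)$ with $\beta$ built from $D$. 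The main obstacle throughout is exactly this transfer of radii, and it is here that the hypotheses are used: finiteness of $H$ is what makes $D$ finite, hence lets a finite radius in $G/H$ be pulled back to a finite radius in $G$ uniformly over all subgroups, and normality is what keeps $KH$ a subgroup and controls the conjugates $khk^{-1}$; neither estimate survives without finiteness of $H$.
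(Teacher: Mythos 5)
Your proof is correct and follows essentially the same route as the paper: in (a) the paper also passes through the fact that $\exp$ of a homomorphism-level coarse equivalence is a coarse equivalence and then restricts to the subgroup lattices (well-defined because homomorphisms send subgroups to subgroups), and in (b) the paper likewise gets effective properness of $q$ from finiteness of preimages of finite sets and takes the preimage map $K\mapsto q^{-1}(K)$ as the explicit coarse inverse of $\mathcal L(q)$. Your write-up merely fills in the details the paper leaves as ``easy to check,'' including the useful observation of why a set-theoretic coarse inverse of $q$ cannot be used directly on $L(G/H)$ and the estimate $q^{-1}(q(K))=KH\in\exp B_G(K,H\cup\{e\})$.
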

\begin{proof}
(a) Note that $f\colon\BB_G\to\BB_H$ is trivially a coarse equivalence. Moreover, it is easy to check that $\exp f\colon\exp\BB_G\to\exp\BB_H$ is a coarse equivalence with inverse $\exp g\colon\exp\BB_H\to\exp\BB_G$ (see also \cite{b5}). Since both $f$ and $g$ are homomorphisms, the restrictions $\mathcal L(f)$ and $\mathcal L(g)$ are well-defined and thus they are coarse equivalences.

(b) Since $q$ is a homomorphism, $q\colon\BB_G\to\BB_{G/H}$ is coarse and also a coarse equivalence, since preimages of finite subsets are finite (see Example \ref{ex:ball}(c)). In particular,
$$
\mathcal L(q)=\exp q|_{\mathcal L(G)}\colon\mathcal L(G)\to\mathcal L(G/H),
$$
which is well-defined, is coarse. Moreover, $g\colon \mathcal L(G/H)\to\mathcal L(G)$ defined by the law $g(K)=q^{-1}(K)$, where $K\leq G/H$, is coarse and a coarse inverse of $\mathcal L(q)$.
\end{proof}

\begin{theorem}\label{theo:4.6}
Let a group $G$ contains an element $g$ of infinite order. Then $\mathcal{L}(G)$ and $\mathcal{L}(\mathbb{Z})$ are coarsely equivalent if and only if $G$ has a finite normal subgroup $H$ such that $G/H\simeq\mathbb{Z}$.
\end{theorem}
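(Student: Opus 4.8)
The plan is to prove the two implications separately: the forward (``if'') direction is an immediate consequence of the machinery already in place, while the reverse (``only if'') direction rests on the structure theory of virtually cyclic groups. For the ``if'' part, assume $G$ has a finite normal subgroup $H$ with $G/H\simeq\Z$. By Lemma \ref{lemma:ce_groups_to_ce_hyp}(b) the quotient map induces a coarse equivalence $\mathcal L(G)\to\mathcal L(G/H)$; since $G/H\simeq\Z$ gives $\mathcal L(G/H)\aaa\mathcal L(\Z)$, and every asymorphism is a coarse equivalence, it follows that $\mathcal L(G)$ and $\mathcal L(\Z)$ are coarsely equivalent.

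For the ``only if'' part, suppose $\mathcal L(G)$ and $\mathcal L(\Z)$ are coarsely equivalent. First I would apply Lemma \ref{lemma:asy_ce_conncomp}(b) to deduce that $\mathcal L(G)$ has exactly two connected components, one bounded and one unbounded. Recalling that (by Lemma \ref{2.3} and Remark \ref{rem:logarithmic}(a)) the connected components of $\mathcal L(G)$ are precisely the commensurability classes of subgroups, these two classes must be the class of $\{e\}$ (all finite subgroups) and the class of the infinite subgroup $\langle g\rangle$; they are genuinely distinct and exhaust $L(G)$. In particular every infinite subgroup of $G$ is commensurable with $\langle g\rangle$, and applying this to $G$ itself forces $\langle g\rangle$ to have finite index in $G$. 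Thus $G$ is finitely generated and virtually infinite cyclic. Moreover, since a fixed ball $\exp B_G(\langle g\rangle,F)$ can contain only finitely many of the subgroups $\langle g^n\rangle$, the component of $\langle g\rangle$ is the unbounded one, and hence the component of finite subgroups is the bounded one.

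The decisive step is to extract from the boundedness of the finite-subgroup component that $G$ has only finitely many torsion elements: testing boundedness at $\{e\}$ yields a finite $F\subseteq G$ such that every finite subgroup is contained in $B_G(\{e\},F)=\{e\}\cup F\cup F^{-1}$, so all torsion elements lie in this fixed finite set. To conclude, I would invoke the classification of infinite virtually cyclic groups: such a $G$ possesses a maximal finite normal subgroup $H$ with $G/H\simeq\Z$ or $G/H\simeq D_\infty$. The infinite dihedral alternative is incompatible with the torsion bound just obtained, because a finite-by-$D_\infty$ group has infinitely many torsion elements (the preimages of the infinitely many involutions of $D_\infty$ consist entirely of torsion elements, their squares landing in the finite group $H$). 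Hence $G/H\simeq\Z$, as required.

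I expect the main obstacle to be this last paragraph: reading off the torsion bound correctly from the definition of boundedness in the exponential hyperballean $\exp\BB_G|_{L(G)}$, and invoking the virtually-cyclic classification while cleanly excluding the $D_\infty$ case. A self-contained replacement for the classification would proceed through the normal core $N=\bigcap_{x\in G}x\langle g\rangle x^{-1}\cong\Z$, the conjugation homomorphism $G\to\mathrm{Aut}(N)\cong\{\pm1\}$, and Schur's theorem applied to its (index at most $2$) kernel, splitting into the cases where $N$ is central or inverted; but the torsion count above is the conceptual heart of the argument, and it is exactly the invariant that Fact \ref{fact:ce_bounded_unbounded} preserves under coarse equivalence.
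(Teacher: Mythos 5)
Your proof is correct, but it takes a genuinely different route from the paper's in both directions, so a comparison is in order. The first half of your forward direction coincides with the paper's: both use Lemma \ref{lemma:asy_ce_conncomp}(b) to get exactly two components, identify the component of $\{e\}$ as the bounded one via the unbounded family $\{\langle g^n\rangle\}_n$, and extract from boundedness at $\{e\}$ that $G$ has only finitely many torsion elements. From there the paths split. The paper proceeds by quotienting: by Dicman's lemma (\cite{b8}) the finite set $H$ of torsion elements is a finite normal subgroup, $G/H$ is torsion-free, and since $\mathcal L(G/H)$ is again coarsely equivalent to $\mathcal L(\Z)$ (Lemma \ref{lemma:ce_groups_to_ce_hyp}(b)), every nontrivial subgroup of $G/H$ has finite index, whence $G/H\simeq\Z$ by Fedorov's theorem (\cite{b6}). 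You instead use the commensurability description of the components (Lemma \ref{2.3} and Remark \ref{rem:logarithmic}(a)) to conclude that $G$ itself is commensurable with $\langle g\rangle$, hence virtually cyclic, and then invoke the classification of infinite virtually cyclic groups (maximal finite normal $H$ with $G/H\simeq\Z$ or $G/H\simeq D_\infty$), ruling out $D_\infty$ with your torsion bound since preimages of its infinitely many involutions are torsion. Both endgames are sound: the paper's stays within its own cited toolkit (Dicman, Fedorov) and never needs virtual cyclicity explicitly, but must re-apply the coarse-equivalence machinery to the quotient; yours avoids that second pass at the price of importing the virtually cyclic classification (your sketched core-plus-Schur replacement would make it self-contained). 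For the converse, your appeal to Lemma \ref{lemma:ce_groups_to_ce_hyp}(b) plus $\mathcal L(G/H)\aaa\mathcal L(\Z)$ is actually cleaner than the paper's own argument, which writes $G=\langle a\rangle H$ and argues that $\mathcal L(\langle a\rangle)$ is large in $\mathcal L(G)$.
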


\begin{proof}
%
($\to$) Assume that $\mathcal{L}(G)$ and $\mathcal{L}(\mathbb{Z})$  are coarsely equivalent. Lemma \ref{lemma:asy_ce_conncomp}(b) implies that $\mathcal L(G)$ has two connected components: one is unbounded (hence, infinite) and one is bounded. Let us see that the connected component
$C:= \QQ_{\mathcal L(G)}(\{e\})$ of $\{e\}$ is the bounded one.
To prove that $C$ is bounded it is enough to observe that it does not contain the infinite subgroup $\langle g\rangle$ as well as its infinitely many
proper subgroups $\langle g^n\rangle$, where $n\ge 2$. Since this family is certainly unbounded in $\mathcal L(G)$, $C$ must be the
bounded component. Consequently, $C$ is finite being contained into a ball around $\{e\}$ (see Fact \ref{fact:finite_balls}).

%
%
%
%

 Since $C$ contains all  finite order elements $h\in G$,  this will imply that the set $H$ of all the elements of finite order of $G$ is finite.
By Ditsmans lemma \cite{b8}, $H$ is a subgroup. Moreover, since conjugacy doesn't change the order of an element, $H$ is normal in $G$. Then $G/H$ is torsion free.


Since $\mathcal L(G/H)$ is coarsely equivalent to $\mathcal L(G)$ (Lemma \ref{lemma:ce_groups_to_ce_hyp}) and thus to $\mathcal L(\Z)$, in particular, we can reapply the usual argument and prove that every proper subgroup $K$ of $G/H$ is large in $G/H$ and so $\lvert G/H:K\rvert$ is finite. By Federov's theorem, $G/H$ is isomorphic to $\Z$.

($\gets$) On the other hand, if $H$ is finite and $G/H\simeq\mathbb{Z}$ then $G=\langle a\rangle H$, $\langle a\rangle\simeq\mathbb{Z}$   and $\mathcal{L}(\langle a\rangle)$ is large in $\mathcal{L}(G)$, so $\mathcal{L}(G) $ and $\mathcal{L}(\mathbb{Z})$  are coarsely equivalent.
\end{proof}

\begin{lemma}\label{lemma:ex1}\label{lemma:ex2}
Let $G$ be a group.
\begin{enumerate}[(a)]
\item if $H$ is a subgroup of $G$ of finite index, then $G$ has only finitely many subgroups containing $H$;
\item if $\mathcal H$ is a family of subgroups of $G$ stable under finite intersections, and there exists $n\in\N$ such that
$\lvert G:H\rvert\leq n$ for every $H\in\mathcal H$, then $\mathcal H$ is finite.
\end{enumerate}
\end{lemma}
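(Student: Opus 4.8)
For part (a), the plan is to exploit the coset decomposition of intermediate subgroups. The key observation is that any subgroup $K$ with $H\leq K\leq G$ is a union of left cosets of $H$: indeed, $H\leq K$ forces $kH\subseteq K$ for every $k\in K$. Since the set of left cosets of $H$ in $G$ has exactly $\lvert G:H\rvert$ elements, each such $K$ is completely determined by the family of cosets it contains. I would then conclude that there are at most $2^{\lvert G:H\rvert}$ subgroups lying between $H$ and $G$, which is finite as soon as $\lvert G:H\rvert$ is finite. (Not every subset of cosets yields a subgroup, but this crude bound suffices.)

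For part (b), the strategy is to manufacture a single subgroup $K\in\mathcal H$ that is contained in every member of $\mathcal H$, and then to reduce to part (a). First I would note that, since the indices $\lvert G:H\rvert$ for $H\in\mathcal H$ are all bounded above by $n$, they attain a maximal value; I would pick $K\in\mathcal H$ realising this maximum. Then, for an arbitrary $H\in\mathcal H$, stability under finite intersections yields $K\cap H\in\mathcal H$, and since $K\cap H\subseteq K$ we get $\lvert G:K\cap H\rvert\geq\lvert G:K\rvert$. Maximality of $\lvert G:K\rvert$ forces equality, hence $K\cap H=K$, i.e. $K\subseteq H$. This shows that $K$ is a finite-index subgroup lying inside every member of $\mathcal H$, so each $H\in\mathcal H$ sits between $K$ and $G$. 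Part (a), applied to $K$, then bounds the number of such subgroups, and therefore $\mathcal H$ is finite.

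The main obstacle is the selection of $K$ in (b): the crucial inputs are that the uniform bound $n$ guarantees a maximal index is actually attained, and that a subgroup sitting inside $K$ with the same finite index must coincide with $K$. Once $K$ is in hand, the passage to (a) is immediate, and part (a) itself is a short counting argument, so I expect no further difficulties.
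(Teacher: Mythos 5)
Your proof is correct, and it takes a genuinely different route from the paper in both parts. For (a), you observe that any intermediate subgroup $H\leq K\leq G$ is a union of left cosets of $H$ and is determined by the set of cosets it contains, giving the crude but sufficient bound $2^{\lvert G:H\rvert}$; the paper instead passes to the normal core $H_G$ of $H$ and invokes the correspondence theorem, identifying subgroups above $H_G$ with subgroups of the finite quotient $G/H_G$. Your argument is more elementary: it needs neither normality nor the (implicitly used, nontrivial) fact that the core has finite index, which the paper asserts without proof and which normally requires embedding $G/H_G$ into the symmetric group on the cosets. For (b), you directly exhibit a minimum element of $\mathcal H$ by taking $K\in\mathcal H$ of maximal index and using intersection-stability plus the index equality $\lvert G:K\cap H\rvert=\lvert G:K\rvert\cdot\lvert K:K\cap H\rvert$ to force $K\subseteq H$ for all $H\in\mathcal H$; the paper instead argues by contradiction, forming the decreasing chain $H_m^*=H_1\cap\cdots\cap H_m$ of members of $\mathcal H$, noting it stabilizes because the indices are bounded by $n$, and then contradicting (a). Both reductions to (a) are sound, but your version is direct rather than by contradiction, and it yields as a bonus the existence of a minimum element of $\mathcal H$ --- a fact the paper needs again later (in the proof of Theorem \ref{[4.8]}) and there extracts from finiteness, whereas your construction produces it outright.
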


\begin{proof}
(a) Let $H_G$ be the core of $H$ in $G$ (i.e., the biggest normal subgroup of $G$ which is contained in $H$), which has still finite index in $G$. Consider the map $q\colon G\to G/H_G$. Then $q$ induces a bijection between the family of subgroups of $G$ containing $H_G$ and the one of the subgroups of $G/H_G$. Since the latter is finite, we are done.

(b) Assume for contradiction that $\mathcal H$ has infinitely many pairwise distinct members
$\{H_m\}_{m\in\N}$. Using the stability of $\mathcal H$ under finite intersections, we can
replace $H_m$ by the intersection $H_m^*:= H_1\cap\cdots\cap H_m$ in order to obtain a
decreasing chain of members of $\mathcal H$. As $\lvert G:H\rvert\leq n$ for every $H\in\mathcal H$, this
chain stabilizes at some stage $N_{m_0}^*$. Therefore all subgroups $H_m$ contain $N_{m_0}^*$. This contradicts (a).
\end{proof}

\begin{theorem}\label{[4.8]}
For an abelian group $G$, $\mathcal{L}(G)$ and $\mathcal{L}(\mathbb{Z})$ are coarsely equivalent if and only if there exists a finite subgroup $H$ of $G$ such that either $G/H\simeq\Z$ or $G/H\simeq\Z_{p^{\infty}}$, for some prime $p$.
\end{theorem}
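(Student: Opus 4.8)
The plan is to prove the two implications separately, splitting the forward direction according to whether $G$ contains an element of infinite order. For \emph{sufficiency}, suppose a finite subgroup $H$ of $G$ satisfies $G/H\simeq\Z$ or $G/H\simeq\Z_{p^\infty}$; since $G$ is abelian, $H$ is automatically normal. I would then invoke Lemma \ref{lemma:ce_groups_to_ce_hyp}(b), by which the quotient map yields a coarse equivalence $\mathcal L(G)\to\mathcal L(G/H)$. If $G/H\simeq\Z$ this already gives the claim, while if $G/H\simeq\Z_{p^\infty}$ one composes with the asymorphism $\mathcal L(\Z_{p^\infty})\aaa\mathcal L(\Z)$ of Corollary \ref{coro:Z_Zp_asy}; in either case $\mathcal L(G)$ is coarsely equivalent to $\mathcal L(\Z)$.

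For \emph{necessity}, assume $\mathcal L(G)$ is coarsely equivalent to $\mathcal L(\Z)$. When $G$ has an element of infinite order I would simply quote Theorem \ref{theo:4.6} to produce a finite subgroup $H$ with $G/H\simeq\Z$. The substantial case is $G$ torsion. Here I would first record, by Lemma \ref{lemma:asy_ce_conncomp}(b), that $\mathcal L(G)$ has exactly two connected components, one bounded and one unbounded, and that $G$ is infinite (otherwise $\mathcal L(G)$ would be bounded, hence connected). Since two subgroups lie in the same component exactly when they are commensurable, $\QQ_{\mathcal L(G)}(\{0\})$ is the set of all finite subgroups and $\QQ_{\mathcal L(G)}(G)$ the set of all finite-index subgroups. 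As an infinite torsion abelian group has finite subgroups of arbitrarily large order, whereas any ball about $\{0\}$ is finite (Fact \ref{fact:finite_balls}), the component $\QQ_{\mathcal L(G)}(\{0\})$ is unbounded; hence it is the unbounded one and $\mathcal C:=\QQ_{\mathcal L(G)}(G)$ is bounded.

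The key step is to exploit boundedness of $\mathcal C$. Taking $G$ itself as centre, boundedness provides a finite $F\subseteq G$ with $A\in\exp B_G(G,F)$ for every finite-index subgroup $A$, forcing $G\subseteq(\{e\}\cup F\cup F^{-1})A$ and thus a uniform bound $N$ on the index of every finite-index subgroup. Since finite-index subgroups are closed under intersection, Lemma \ref{lemma:ex2}(b) then shows there are only finitely many of them, so their intersection $H$ is the smallest finite-index subgroup; consequently $H$ has no proper subgroup of finite index and is divisible by Claim \ref{Cl1}. Because $G/H$ is finite, hence reduced, $H=d(G)$, and divisibility lets it split as $G=d(G)\oplus K$ with $K\cong G/d(G)$ finite. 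Finally $d(G)$ is an infinite divisible torsion abelian group, hence a direct sum of copies of groups $\Z_{q^\infty}$ by Fact \ref{theo:divis_structure}; were there two such summands $D_1,D_2$, then $\{0\},D_1,D_2$ would be pairwise non-commensurable (as $D_1\cap D_2=\{0\}$), giving at least three components and contradicting $\dsc\mathcal L(G)=\dsc\mathcal L(\Z)=2$ (Fact \ref{fact:ce_bounded_unbounded} and Proposition \ref{last:propo}(b)). Thus $d(G)\simeq\Z_{p^\infty}$ for a single prime $p$, and $K$ is the desired finite subgroup with $G/K\cong d(G)\simeq\Z_{p^\infty}$.

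I expect the main obstacle to be exactly this passage from coarse data to algebra: turning the qualitative boundedness of $\mathcal C$ into a quantitative uniform index bound, and then feeding it into Lemma \ref{lemma:ex2} to reach divisibility of $d(G)$. Once divisibility is in hand, the two-component count disposes of every divisible torsion group except the single Pr\"ufer groups, and the splitting $G=d(G)\oplus K$ closes the argument.
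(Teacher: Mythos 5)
Your proposal is correct and follows essentially the same route as the paper's proof: the same case split via Theorem \ref{theo:4.6}, the same identification of the bounded component with the family of finite-index subgroups using Fact \ref{fact:finite_balls}, and the same passage from boundedness to a uniform index bound feeding Lemma \ref{lemma:ex2}(b) to produce a minimal finite-index subgroup. The only divergence is the endgame: the paper notes that this minimal subgroup is quasi-finite (since every subgroup of $G$ is finite or of finite index) and invokes the classification of infinite abelian quasi-finite groups, whereas you derive divisibility from Claim \ref{Cl1}, identify the subgroup with $d(G)$, and finish with Fact \ref{theo:divis_structure} plus the two-component count --- both arguments are valid, and yours additionally spells out the sufficiency direction and the splitting $G=d(G)\oplus K$ that the paper leaves implicit.
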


\begin{proof} Assume that  $\mathcal{L}(G)$ and $\mathcal{L}(\mathbb{Z})$  are coarsely equivalent. If $G$ has an element of infinite order then we apply Theorem \ref{theo:4.6}. Otherwise, suppose that $G$ is a torsion group.
Since $\mathcal L(G)$ and $\mathcal L(\Z)$ are coarsely equivalent, we deduce from Lemma  \ref{lemma:asy_ce_conncomp}, that $\mathcal L(G)$ has two connected components and one of them is bounded, while the other one is unbounded. Since $G$ is torsion, Fact \ref{fact:finite_balls} implies that $\QQ_{\mathcal L(G)}(\{0\})$ must be unbounded. Hence, the family $\mathcal H$ of all finite index subgroups of $G$ satisfies the hypothesis of Lemma \ref{lemma:ex2}(b) and thus $\mathcal H$ is finite and, in particular, it has a minimum element $K$. Then $G/K$ is finite and $K$ is quasi-finite and thus, since $G$ is abelian, $K\simeq\Z_{p^{\infty}}$, for some prime $p$. Hence the claim follows.
%
%
\end{proof}

We cannot state similar results for the subgroup logarithmic hyperballean, since
 the balls centred at $\{0\}$ can have infinitely many elements (see Example \ref{ex:Tarskii}).


\begin{thebibliography}{9}
	
\bibitem{DraBel} A. Dranishnikov, G. Bell, {\em Asymptotic dimension}, Topology and its Applications Volume 155, Issue 12, 2008, 1265-1296.

\bibitem{CeDyVa} M. Cencelj, J. Dydak, A. Vavpeti\v{c}, {\em Large scale versus small scale}, Recent Progress in General Topology III, 165-203

\bibitem{b7} S. Chernikov, {\it Infinite layerly finite groups}, Mat. Sb. {\bf  22}(1948), 101--133.

\bibitem{b1}  C. Chou, {\it On the size of the set of  left invariant means on a semigroup}, Proc. Amer. Math. Soc. {\bf 23} (1969), 199--205.

\bibitem{b8} A. Ditsman, {\it On $p$-groups},  DAN SSSR, {\bf 15}(1937), 71--76.

\bibitem{b5} D. Dikranjan, I. Protasov, K. Protasova, N. Zava, {\it Balleans, hyperballeans  and ideals}, Appl. Top., to appear.

\bibitem{DikZa} D. Dikranjan, N. Zava, {\em Some categorical aspects of coarse spaces and balleans}, Topology Appl., 225 (2017), 164--194.

\bibitem{DikZa1} D. Dikranjan, N. Zava, {\em Categories of coarse groups: quasi-homomorphisms and functorial coarse structures}, submitted.

\bibitem{DikZa2} D. Dikranjan, N. Zava, {\em The impact of Pontryagin and Bohr functors on large-scale properties of LCA groups},
submitted.

\bibitem{b6} Ya. Fedorov, {\it On infinite groups with proper subgroups of finite index}, Usp.Mat. Nauk, {\bf 6}(1951), no. 1, 187--189.

\bibitem{F} L. Fuchs, {\it Infinite abelian groups}, Vol. 1. New York and London, Academic Press, (1970).

\bibitem{b4} A. Kurosh,{\it Theory of Groups}, M Nauka, 1967.

\bibitem{Isb} J. R. Isbell, {\em Uniform Spaces}, American Mathematical Society (1964).

\bibitem{Ols} A. Yu. Olshanskii, {\em An infinite group with subgroups of prime orders}, Math. USSR Izv. 16 (1981), 279--289; translation of Izvestia Akad. Nauk SSSR Ser. Matem. 44 (1980), 309--321.

\bibitem{b2} O. Petrenko, I.V. Protasov, {\it Balleans and G-spaces}, Ucr. Mat. Zh. {\bf 64}(2012), 344--350.

\bibitem{ProBan} I. Protasov, T. Banakh, {\em Ball Structures and Colorings of Groups and Graphs.}- Mat. Stud. Monogr. Ser 11, VNTL, Lviv, 2003.

\bibitem{ProPro} I. Protasov, K. Protasova, {\em On hyperballeans of bounded geometry}, K. European Journal of Mathematics (2018),
{\tt https://doi.org/10.1007/s40879-018-0236-y}.

\bibitem{b3} I. Protasov, M. Zarichnyi,  {\it General Asymptopogy}, 2007 VNTL Publisher, Lviv.

\bibitem{rs} T. Radul and O. Shukel$'$, {\em Factors of finite degree and asymptotic dimension}, Mat. Stud. 31 (2009), 204--206.

\bibitem{Roe} J. Roe, \emph{Lectures on Coarse Geometry}, Univ. Lecture Ser., vol. 31, American Mathematical Society, Providence RI, 2003.

\end{thebibliography}
\end{document}